\newtheorem{thm}{Theorem}[section]
\newtheorem{lem}[thm]{Lemma}
\newtheorem{prop}[thm]{Proposition}
\newtheorem{rem}[thm]{Remark}
\newtheorem{ex}[thm]{Example}
\newtheorem{prob}[thm]{Problem}
\newtheorem{defn}[thm]{Definition}
\newtheorem{claim}[thm]{Claim}
\numberwithin{equation}{section}
\newcommand{\N}{\mathbb{N}}
\newcommand{\Z}{\mathbb{Z}}
\newcommand{\R}{\mathbb{R}}
\newcommand{\Bl}{\color{blue}}
\begin{document}

\title{Locally $C^{1,1}$ convex extensions of $1$-jets}

\author{Daniel Azagra}
\address{ICMAT (CSIC-UAM-UC3-UCM), Departamento de An{\'a}lisis Matem{\'a}tico y Matem\'atica Aplicada,
Facultad Ciencias Matem{\'a}ticas, Universidad Complutense, 28040, Madrid, Spain.  
}
\email{azagra@mat.ucm.es}
%
%
%

\date{May 15, 2019}

\keywords{convex function, convex body, locally $C^{1,1}$, Whitney extension theorems}


\begin{abstract}
Let $E$ be an arbitrary subset of $\mathbb{R}^n$, and $f:E\to\mathbb{R}$, $G:E\to\mathbb{R}^n$ be given functions. We provide necessary and sufficient conditions for the existence of a convex function $F\in C^{1,1}_{\textrm{loc}}(\mathbb{R}^n)$ such that $F=f$ and $\nabla F=G$ on $E$.
We give a useful explicit formula for such an extension $F$, and a variant of our main result for the class $C^{1, \omega}_{\textrm{loc}}$, where $\omega$ is a modulus of continuity. We also present two applications of these results, concerning how to find $C^{1,1}_{\textrm{loc}}$ convex hypersurfaces with prescribed tangent hyperplanes on a given subset of $\mathbb{R}^n$, and some explicit formulas for (not necessarily convex) $C^{1,1}_{\textrm{loc}}$ extensions of $1$-jets.
\end{abstract}

\maketitle

\section{Introduction and main results}

In \cite{AzagraMudarra1, AzagraLeGruyerMudarra, AzagraMudarra2} we considered the following problem.
\begin{prob}\label{main problem}
If $\mathcal{C}$ is a class of differentiable functions on $\R^n$ and we are given a subset $E$ of $\R^n$ and two functions $f:E\to\R$ and $G:E\to\R^n$, how can we decide whether there is a {\em convex} function $F\in\mathcal{C}$ such that $F(x)=f(x)$ and $\nabla F(x)=G(x)$ for all $x\in E$? 
\end{prob}
In those articles the problem was solved when $\mathcal{C}$ is one of the classes $C^{1,1}(\R^n)$, $C^{1, \omega}(\R^n)$, $C^{1}(\R^n)$.
We refer to the introductions of the papers \cite{AzagraLeGruyerMudarra, AzagraMudarra1, AzagraMudarra2, GhomiJDG2001, MinYan} for some motivation and background for this problem. We also recommend to see \cite{BierstoneMilmanPawlucki1, BierstoneMilmanPawlucki2, BrudnyiShvartsman, Fefferman2005, Fefferman2006, FeffermanSurvey, FeffermanIsraelLuli3, FeffermanIsraelLuli1, FeffermanIsraelLuli2, FeffermanKlartag, FeffermanShvartsman, Glaeser, HajlaszEtALSobolevExtension, KlartagZobin, KoskelaEtAl, LeGruyer1, PinamontiSpeightZimmerman, Shvartsman2010, Shvartsman2014, Shvartsman2017, ShvartsmanZobin, Wells, Whitney, Zobin1998, Zobin1999} and the references therein for information about general (we mean not necessarily convex) Whitney extension problems for jets and for functions.

Nothing is known about Problem \ref{main problem} in the case that $E$ is arbitrary and $\mathcal{C}=C^{m}$, $m\geq 2$, and in fact the problem looks extremely hard to solve for higher order differentiability classes, in view of the following two facts: 1) partitions of unity cannot be used to patch local convex extensions, as they  destroy convexity; and 2) convex envelopes do not preserve smoothness of orders higher than $C^{1,1}$, so the techniques of \cite{AzagraLeGruyerMudarra, AzagraMudarra1, AzagraMudarra2} cannot be employed to construct $C^2$ extensions of jets. See \cite{AzagraMudarra3} for the special case that $E$ is convex and $m=\infty$.

In this paper we study and solve Problem \ref{main problem} for the class $C^{1,1}_{\textrm{loc}}(\R^n)$ of differentiable functions with locally Lipschitz gradients (see Section 3 below for a more precise definition including its natural topological structure as a Fr\'echet space). This class of functions is very interesting at least for the following two reasons. On the one hand, $C^{1,1}_{\textrm{loc}}$ regularity is good enough for many purposes in  Real Analysis and Differential Geometry. On the other hand, in contrast with the rather sparse class of (globally) $C^{1,1}$ functions, the class $C^{1,1}_{\textrm{loc}}$ comprises lots of convex functions. Indeed, for instance the function $f(x)=x^4$, $x\in\R$, does not have a Lipschitz derivative, hence it is not of class $C^{1,1}$, but it is of course of class $C^{1,1}_{\textrm{loc}}$. Much more generally: as a consequence of the main result of \cite{Azagra2013}, every convex function on $\R^n$ can be uniformly approximated by $C^{1,1}_{\textrm{loc}}$ convex functions; however, this is not true if we replace the class  $C^{1,1}_{\textrm{loc}}$ with the class $C^{1,1}$ (any function which grows more than quadratically at infinity serves as a couterexample). Further motivation for the present paper comes from the work \cite{AzagraHajlasz}, where we need to know when we can find convex extensions of class $C^{1,1}_{\textrm{loc}}(\R^n)$ of a given jet $(f, G):E\to \R\times\R^n$. There are also other interesting applications of solutions to Problem \ref{main problem}; see Section 4 below.

Due to the mentioned fact that partitions of unity are useless in this kind of problems, the $C^{1,1}$ convex extension results of \cite{AzagraLeGruyerMudarra} do not give us a method of deciding whether or not a given jet has a $C^{1,1}_{\textrm{loc}}$ convex extension. 
As a matter of fact, there are very important differences between the global behavior of $C^{1,1}_{\textrm{loc}}$ convex functions and that of $C^{1,1}$ convex functions. Those differences may even be decisive in determining whether a given jet has extensions in these classes. For instance, $C^{1,1}$ convex functions on $\R^n$ cannot have what in \cite{AzagraMudarra2} we called {\em corners at infinity}, but $C^{1,1}_{\textrm{loc}}$ convex functions (and even real-analytic convex functions) can have them. 
Neither can the results of \cite{AzagraMudarra2} be applied to solve Problem \ref{main problem} for $\mathcal{C}=C^{1,1}_{\textrm{loc}}$. This is due both to the unsuitability of the conditions of the main result of \cite{AzagraMudarra2} (which ignore the difficulty that, in addition to corners, $C^{1,1}_{\textrm{loc}}$ convex functions may have other kinds of weaker singularities at infinity, such as what we could call {\em H\"older wedges at infinity}; see Examples \ref{example of Hoder wedge at infinity}, \ref{example of jet with Holder wedge and no extension} and \ref{example of jet with Holder wedge and many extensions} below), and also to some important elements of its proof.
In order to solve Problem \ref{main problem}, in this paper we will make a hybrid of results and methods of \cite{AzagraLeGruyerMudarra} and \cite{AzagraMudarra2}, also using some ideas of \cite{Azagra2013} and \cite{KirchheimKristensen}.

As in \cite{AzagraMudarra2}, our most general results contain some complicated conditions which may be difficult to grasp at first reading. For this reason, and in order to facilitate understanding of this paper, we will start by examining some corollaries and examples. It will also be convenient to state the following reformulation of the main result of \cite{AzagraLeGruyerMudarra}.

\begin{thm}[Azagra-LeGruyer-Mudarra]\label{reformulation from AzagraLeGruyerMudarra main result}
Let $E$ be an arbitrary nonempty subset of $\R^n$. Let $f:E\to\R$, $G:E\to\R^n$ be given functions. Assume that there exists some $M>0$ such that
\begin{equation}\label{every tangent parabola lies above all tangent planes}
f(z)+\langle G(z), x-z\rangle\leq f(y)+\langle G(y), x-y\rangle +\frac{M}{2}|x-y|^2
\end{equation}
for every $y, z\in E$ and every $x\in \R^n$. Then the formula  
\begin{equation}\label{extension formula case C11 conv}
F=\textrm{conv}\left(x\mapsto \inf_{y\in E}\left\{f(y)+\langle G(y), x-y\rangle+\frac{M}{2}|x-y|^2\right\}\right)
\end{equation}
defines a $C^{1,1}$ convex extension of $f$ to $\R^n$ which satisfies $\nabla F=G$ on $E$ and $\textrm{Lip}(\nabla F)\leq M$.
 
Conversely, if there is a $C^{1,1}(\R^n)$ convex extension $F$ of the $1$-jet $(f, G)$, then \eqref{every tangent parabola lies above all tangent planes} must be satisfied for every $M\geq\textrm{Lip}(\nabla F)$.
\end{thm}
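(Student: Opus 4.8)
The plan is to prove the two directions separately, and to reduce the first (the harder, constructive) direction to known facts about Moreau–Yosida-type regularizations and convex envelopes. For the direct implication, assume the inequality \eqref{every tangent parabola lies above all tangent planes} holds for some $M>0$. Set
\[
g(x)=\inf_{y\in E}\left\{f(y)+\langle G(y),x-y\rangle+\frac{M}{2}|x-y|^2\right\},
\]
and $F=\operatorname{conv}(g)$. The first step is to check that $g$ is real-valued (not identically $-\infty$) and that, in fact, $g(x)>-\infty$ for every $x$: taking $z\in E$ fixed and using \eqref{every tangent parabola lies above all tangent planes} with that $z$ on the right-hand side, every term in the infimum is bounded below by the affine function $x\mapsto f(z)+\langle G(z),x-z\rangle$, so $g\geq$ that affine map, hence $F=\operatorname{conv}(g)\geq$ it too and $F$ is a genuine convex function. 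Moreover each function $x\mapsto f(y)+\langle G(y),x-y\rangle+\frac{M}{2}|x-y|^2$ has $\frac{M}{2}|x|^2$ as its quadratic part, so $x\mapsto g(x)-\frac{M}{2}|x|^2$ is an infimum of affine functions, hence concave; equivalently $\frac{M}{2}|x|^2-g(x)$ is convex, i.e. $g$ is "$M$-semiconcave." This is the key structural feature: the convex envelope of an $M$-semiconcave function that is bounded below by an affine map is automatically $C^{1,1}$ with $\operatorname{Lip}(\nabla F)\le M$. I would cite this regularization fact (it underlies the $C^{1,1}$ convex extension theorem of \cite{AzagraLeGruyerMudarra}); it can also be seen directly because on the "contact set" where $F=g$ the function $F$ inherits the upper quadratic bound from $g$, while off the contact set $F$ is affine on segments, so in either case $F$ lies below its tangent paraboloids of parameter $M$, and a convex function trapped between a tangent plane and a tangent paraboloid of opening $M$ at each point has $M$-Lipschitz gradient.

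The second step is to verify the interpolation conditions $F(x)=f(x)$ and $\nabla F(x)=G(x)$ for $x\in E$. Fix $a\in E$. Taking $y=a$ in the infimum defining $g$ gives $g(a)\le f(a)$, and the affine lower bound argument above with $z=a$ gives $g(x)\ge f(a)+\langle G(a),x-a\rangle$ for all $x$, so in particular $g(a)\ge f(a)$; hence $g(a)=f(a)$. Since $F=\operatorname{conv}(g)\le g$ everywhere and $F\ge f(a)+\langle G(a),\cdot-a\rangle$ (an affine minorant of $g$ hence of $F$), evaluating at $a$ forces $F(a)=f(a)$. The same affine function $x\mapsto f(a)+\langle G(a),x-a\rangle$ is then a supporting hyperplane of the convex function $F$ at $a$ with slope $G(a)$, and since $F$ is differentiable at $a$ (being $C^{1,1}$) this supporting hyperplane is unique, so $\nabla F(a)=G(a)$. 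Finally $F$ extends $f$ and has $\operatorname{Lip}(\nabla F)\le M$, so $F\in C^{1,1}(\R^n)$ is the desired extension.

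For the converse, suppose $F$ is a $C^{1,1}(\R^n)$ convex extension of $(f,G)$ and let $M\ge\operatorname{Lip}(\nabla F)$. The standard quadratic bound for functions with $M$-Lipschitz gradient gives, for all $x,w\in\R^n$,
\[
F(x)\le F(w)+\langle\nabla F(w),x-w\rangle+\frac{M}{2}|x-w|^2,
\]
while convexity gives $F(w)\ge F(z)+\langle\nabla F(z),w-z\rangle$ for all $z$. Applying the first inequality at $w=z\in E$ and then combining, or more directly: for $y,z\in E$ and any $x$, convexity of $F$ yields $F(x)\ge f(z)+\langle G(z),x-z\rangle$, while the quadratic upper bound applied at $w=y\in E$ yields $F(x)\le f(y)+\langle G(y),x-y\rangle+\frac{M}{2}|x-y|^2$; chaining these two gives exactly \eqref{every tangent parabola lies above all tangent planes}. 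This direction is essentially immediate once the two one-sided quadratic estimates are in hand.

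The main obstacle is the $C^{1,1}$ regularity claim in the first part — proving $\operatorname{Lip}(\nabla F)\le M$ for the convex envelope $F$. One must show $F$ is differentiable everywhere (not merely subdifferentiable) and control the gradient's modulus. The cleanest route is the two-sided squeeze: at each point $x_0$, produce a supporting affine function $\ell$ with $\ell\le F$ and $\ell(x_0)=F(x_0)$, and simultaneously show $F(x)\le \ell(x)+\frac{M}{2}|x-x_0|^2$ near $x_0$; the supporting plane at a point of an "$M$-paraboloid from above" is necessarily unique and its slope is an $M$-Lipschitz function of the point. The upper paraboloid bound for $F=\operatorname{conv}(g)$ follows from Carathéodory's theorem (write a point of the graph of $F$ as a convex combination of at most $n+1$ points of the graph of $g$, then use the $M$-semiconcavity of $g$, which is preserved under such convex combinations because the "defect from concave" is the fixed quadratic $\frac{M}{2}|\cdot|^2$). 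I expect to invoke the corresponding lemma from \cite{AzagraLeGruyerMudarra} rather than reprove it in full, since that paper's proof of the same statement is exactly this argument.
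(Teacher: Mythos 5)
Your proposal is correct, but a remark about the comparison is in order first: the paper under review does \emph{not} prove Theorem \ref{reformulation from AzagraLeGruyerMudarra main result}; immediately after its statement the author says it is ``not explicitly stated in \cite{AzagraLeGruyerMudarra}, but it is implicitly contained in the proof of \cite[Theorem~2.4]{AzagraLeGruyerMudarra}.'' So you are supplying a proof where the paper supplies a citation, and your outline is indeed the argument that reference uses: define the minimal paraboloid-envelope $g$, observe that $g-\frac{M}{2}|\cdot|^2$ is concave (infimum of affine functions) and $g$ is bounded below by an affine minorant, deduce $F=\textrm{conv}(g)$ is $C^{1,1}$ with $\textrm{Lip}(\nabla F)\le M$, then squeeze at points of $E$ to get the interpolation; the converse is the two standard one-sided quadratic estimates.

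One small caution on the key regularity step. Your heuristic that ``on the contact set $F$ inherits the bound from $g$, off it $F$ is affine on segments'' is not by itself a proof that $F$ satisfies the second-difference estimate at the non-contact points, and the route you sketch afterwards via a two-sided squeeze plus the $h/\lambda_1$-rescaling trick is really the machinery needed for the \emph{local} variant (it is essentially the present paper's Lemma \ref{the convex envelope is smooth}, adapted from Kirchheim--Kristensen, where the estimate only holds on balls and one must keep track of where the Carath\'eodory points land). For the global statement at hand there is a shorter argument: take $\lambda_i\ge0$ with $\sum\lambda_i=1$, $x=\sum\lambda_i x_i$, $\sum\lambda_i g(x_i)<F(x)+\varepsilon$; then $F(x\pm h)\le\sum\lambda_i g(x_i\pm h)$ because $x\pm h=\sum\lambda_i(x_i\pm h)$, so
\begin{equation*}
F(x+h)+F(x-h)-2F(x)\le\sum_i\lambda_i\bigl(g(x_i+h)+g(x_i-h)-2g(x_i)\bigr)+2\varepsilon\le M|h|^2+2\varepsilon,
\end{equation*}
and letting $\varepsilon\to0$ gives the global second-difference bound for $F$ directly, from which differentiability of the convex $F$ and $\textrm{Lip}(\nabla F)\le M$ follow. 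This is cleaner than your squeeze argument and is worth recording, since it also explains \emph{why} the local version (where this shortcut is unavailable) is genuinely harder and forces the essential-coercivity hypothesis that drives the rest of the paper.
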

Here $\textrm{conv}(g)$ denotes the convex envelope of a function $g$, that is, 
\begin{equation}\label{defn of convex envelope}
\textrm{conv}(g)(x)=\sup\{\varphi(x) \, : \, \varphi \textrm{ is convex, } \varphi\leq g\}.
\end{equation}
Other useful expressions for $\textrm{conv}(g)$ are given by
\begin{equation}\label{convex envelope as the inf...}
\textrm{conv}(g)(x)=\inf\left\lbrace \sum_{j=1}^{n+1}\lambda_{j} g(x_j) \, : \, \lambda_j\geq 0,
\sum_{j=1}^{n+1}\lambda_j =1, \, x=\sum_{j=1}^{n+1}\lambda_j x_j \right\rbrace
\end{equation}
(see \cite[Corollary 17.1.5]{Rockafellar} for instance),
and by the Fenchel biconjugate of $g$, that is,
\begin{equation}
\textrm{conv}(g)=g^{**},
\end{equation}
where 
\begin{equation}
h^{*}(x):=\sup_{v\in \R^n}\{\langle v, x\rangle -h(v)\}
\end{equation}
(see \cite[Proposition 4.4.3]{BorweinVanderwerffbook} for instance). 

Theorem \ref{reformulation from AzagraLeGruyerMudarra main result} is not explicitly stated in \cite{AzagraLeGruyerMudarra}, but it is implicitly contained in the proof of \cite[Theorem 2.4]{AzagraLeGruyerMudarra}.
Geometrically speaking, the epigraph of $F$ is the closed convex envelope in $\R^{n+1}$ of the union of the family of paraboloids $\{\mathcal{P}_{y}: y\in E\}$, where $\mathcal{P}_y=\{(x,t)\in\R^n\times\R : t=f(y)+\langle G(y), x-y\rangle +\frac{M}{2}|x-y|^2, x\in\R^n\}$, and condition \eqref{every tangent parabola lies above all tangent planes} tells us that these paraboloids must lie above the putative tangent hyperplanes $\{(x,t)\in\R^n\times\R : t=f(z)+\langle G(z), x-z\rangle\}$.

In this paper we will be looking for analogues of this result for the more complicated case of $C^{1,1}_{\textrm{loc}}$ convex extensions of $1$-jets. If the given jet $(f, G)$ has the property that $\textrm{span}\{G(y)-G(z) : y, z\in E\}=\R^n$, then our main result is easier to understand and use, and can be stated as follows.

\begin{thm}\label{Corollary 2 to the main coercive result}
Let $E$ be an arbitrary nonempty subset of $\R^n$. Let $f:E\to\R$, $G:E\to\R^n$ be functions such that 
\begin{equation}\label{essentially coercive data corollary 2}
\textrm{span}\{G(x)-G(y) : x, y\in E\}=\R^n.
\end{equation}
Then there exists a convex function $F\in C^{1,1}_{\textrm{loc}}(\R^n)$ such that $F_{|_E}=f$ and $(\nabla F)_{|_E}=G$ if and only if for each $k\in \N$ there exists a number $A_k\geq 2$ such that
\begin{equation}\label{every tangent function lies above all tangent planes corollary 2}
f(z)+\langle G(z), x-z\rangle\leq f(y)+\langle G(y), x-y\rangle +\frac{A_k}{2}|x-y|^2 
\end{equation}
$$
\textrm{ for every }z\in E,
\textrm{ every }y\in E\cap B(0, k), \textrm{ and every } x\in B(0, 4k).
$$
Furthermore, if $G$ is bounded then a formula for such an extension $F$ is given by
\begin{equation}\label{extension formula for G bounded 1}
F(x)=\textrm{conv}\left( x\mapsto \inf_{y\in E}\{f(y)+\langle G(y), x-y\rangle +\frac{1}{2}\left( A_{k(y)} +4\|G\|_{\infty}+1\right) |x-y|^2\}\right),
\end{equation}
where $k(y)$ is defined as the first positive integer such that $y\in B(0, k)$, and $\|G\|_{\infty}:=\sup_{x\in E}|G(x)|$.
\end{thm}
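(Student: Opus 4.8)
The plan is to establish the two implications separately; the \textbf{only if} direction is short and does not use \eqref{essentially coercive data corollary 2}, while for the \textbf{if} direction I would first treat the case of bounded $G$ using the explicit formula \eqref{extension formula for G bounded 1}, and then obtain the general case along the same lines with the general coercive construction replacing the closed-form expression.

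\textbf{Necessity.} Assume $F\in C^{1,1}_{\textrm{loc}}(\R^n)$ is convex with $F=f$, $\nabla F=G$ on $E$. Fix $k\in\N$, let $L_k:=\textrm{Lip}\!\left(\nabla F|_{\overline{B(0,4k)}}\right)<\infty$ and set $A_k:=\max\{2,L_k\}$. For $y\in E\cap B(0,k)$ and $x\in B(0,4k)$, convexity of the ball together with the standard $C^{1,1}$ estimate gives $F(x)\le F(y)+\langle\nabla F(y),x-y\rangle+\tfrac{L_k}{2}|x-y|^2$, while convexity of $F$ gives $F(z)+\langle\nabla F(z),x-z\rangle\le F(x)$ for every $z\in E$; chaining these two inequalities and using $(F,\nabla F)=(f,G)$ on $E$ yields \eqref{every tangent function lies above all tangent planes corollary 2}.

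\textbf{Sufficiency.} Suppose $G$ is bounded, write $\|G\|_\infty:=\sup_E|G|$, and for $y\in E$ put $c(y):=A_{k(y)}+4\|G\|_\infty+1$, $g_y(x):=f(y)+\langle G(y),x-y\rangle+\tfrac{c(y)}{2}|x-y|^2$, $h:=\inf_{y\in E}g_y$, and let $F:=\textrm{conv}(h)$ be the function in \eqref{extension formula for G bounded 1}. The key step is to show that for each $y_0\in E$ the affine function $\ell_{y_0}(x):=f(y_0)+\langle G(y_0),x-y_0\rangle$ satisfies $\ell_{y_0}\le g_y$ on all of $\R^n$, for every $y\in E$. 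Indeed, fix $y$ and $k:=k(y)$ (so $|y|<k$): if $|x-y|<1$ then $|x|<k+1\le 4k$, so $x\in B(0,4k)$ and \eqref{every tangent function lies above all tangent planes corollary 2} with $z=y_0$ gives $\ell_{y_0}(x)\le f(y)+\langle G(y),x-y\rangle+\tfrac{A_k}{2}|x-y|^2\le g_y(x)$; if $|x-y|\ge 1$, then \eqref{every tangent function lies above all tangent planes corollary 2} with $x=y$, $z=y_0$ gives $\ell_{y_0}(y)\le f(y)$, so the affine map $x\mapsto\ell_{y_0}(x)-f(y)-\langle G(y),x-y\rangle$, which has gradient $G(y_0)-G(y)$ and is $\le 0$ at $x=y$, is bounded above by $2\|G\|_\infty|x-y|\le 2\|G\|_\infty|x-y|^2\le\tfrac{c(y)}{2}|x-y|^2$. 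Hence $\ell_{y_0}\le h$, so $\ell_{y_0}\le F\le h\le g_{y_0}$; evaluating at $y_0$ gives $F(y_0)=f(y_0)$, and the squeeze $0\le F(x)-\ell_{y_0}(x)\le g_{y_0}(x)-\ell_{y_0}(x)=\tfrac{c(y_0)}{2}|x-y_0|^2$ shows $F$ is differentiable at $y_0$ with $\nabla F(y_0)=\nabla\ell_{y_0}(y_0)=G(y_0)$. Moreover $h\ge\ell_{y_0}$, so $h$ is affinely minorized and $F=\textrm{conv}(h)$ is a proper convex function.

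\textbf{Local $C^{1,1}$ regularity, the crux.} Fix $R>0$. Using that $h\le g_{y_0}$ is bounded above on $B(0,R)$, that $g_y\ge\ell_{y_0}$, and that $c(y)\ge 2$, a direct estimate produces $R'=R'(R,\|G\|_\infty)$ such that $g_y>\sup_{B(0,R)}g_{y_0}$ on $B(0,R)$ whenever $|y|\ge R'$; hence on $B(0,R)$ one has $h=h_{R'}:=\inf\{g_y:y\in E,\ |y|<R'\}$. Every paraboloid entering $h_{R'}$ has leading coefficient $c(y)\le M_R$ for some $M_R<\infty$ depending only on $R$, so each $g_y-\tfrac{M_R}{2}|\cdot|^2$ is concave and hence so is their infimum; thus $h_{R'}$ is $M_R$-semiconcave and, being $\ge\ell_{y_0}$, affinely minorized, so the argument of \cite{AzagraLeGruyerMudarra} behind Theorem \ref{reformulation from AzagraLeGruyerMudarra main result} gives $\textrm{conv}(h_{R'})\in C^{1,1}(\R^n)$ with $\textrm{Lip}(\nabla\,\textrm{conv}(h_{R'}))\le M_R$. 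It then remains to check $F=\textrm{conv}(h_{R'})$ on $B(0,R)$: the inequality ``$\le$'' is immediate from $h\le h_{R'}$, and for ``$\ge$'' one uses that \eqref{essentially coercive data corollary 2} makes the data \emph{essentially coercive} in the sense of \cite{AzagraMudarra2} --- the gradients $\{G(y):y\in E\}$ lie in no hyperplane --- so that in the Carath\'eodory representation \eqref{convex envelope as the inf...} of $F(x)$ for $x\in B(0,R)$ the points $x_j$ may be confined to a fixed bounded set on which $h$ and $h_{R'}$ coincide. As $R$ was arbitrary, $F\in C^{1,1}_{\textrm{loc}}(\R^n)$. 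I expect this last point --- that near any ball the convex envelope is insensitive to the far-away paraboloids, whose coefficients $A_k$ may blow up --- to be the genuine obstacle, and it is precisely where hypothesis \eqref{essentially coercive data corollary 2} is indispensable; the remaining case of unbounded $G$ follows the same outline, using the general coercive construction in place of \eqref{extension formula for G bounded 1}.
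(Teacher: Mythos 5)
Your necessity argument and the initial steps of the sufficiency part (bounded $G$) are fine: the inequality $\ell_{y_0}\le g_y$ on $\R^n$, i.e. $m\le h$, is correctly derived from \eqref{every tangent function lies above all tangent planes corollary 2} and the boundedness of $G$, and the restriction $h=h_{R'}$ on $B(0,R)$ is exactly Lemma \ref{if varphiy then varphitilde restricts} of the paper. The squeeze argument giving $F=f$, $\nabla F=G$ on $E$ is also standard and correct.

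The gap is at what you yourself call the crux. You claim that for $x\in B(0,R)$ the Carath\'eodory points $x_1,\dots,x_{n+1}$ in \eqref{convex envelope as the inf...} ``may be confined to a fixed bounded set on which $h$ and $h_{R'}$ coincide,'' and you deduce $F=\textrm{conv}(h_{R'})$ on $B(0,R)$. This confinement is not available. Coercivity of $g:=h$ only forces boundedness of the point carrying the \emph{largest} weight $\lambda_1\geq\tfrac1{n+1}$: from $\sum_j\lambda_j g(x_j)\leq F(x)+1$ and $g\geq 0$ one gets $g(x_1)\leq(n+1)(F(x)+1)$, hence $x_1\in B(0,R')$; but for a small weight $\lambda_j\to0$ one can perfectly well have $x_j\to\infty$ with $\lambda_j g(x_j)$ bounded (recall $g$ grows only linearly in the directions where it is close to the piecewise-affine $m$). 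Consequently the identity $\textrm{conv}(h)=\textrm{conv}(h_{R'})$ on $B(0,R)$ does not follow — indeed it need not hold, because an affine minorant of $h_{R'}$ can exceed $h$ outside $B(0,R)$, so the two envelopes are not built from the same families of minorants. What the paper does instead (Lemma \ref{the convex envelope is smooth}, adapted from Kirchheim--Kristensen) is precisely tailored to the fact that only $x_1$ is controlled: after passing to a subsequence so $x_1^{(k)}\to x_1\in B(0,R')$ and $\lambda_1^{(k)}\to\lambda_1\geq\tfrac1{n+1}$, one writes $F(x\pm h)-F(x)\leq\lambda_1\bigl(g(x_1\pm h/\lambda_1)-g(x_1)\bigr)$, adds, and uses the two-sided second-order estimate for $g$ on $B(0,(n+1)R')$ to obtain $F(x+h)+F(x-h)-2F(x)\leq (n+1)C_R|h|^2$. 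That single-point confinement, not total confinement, is what essential coercivity (via \eqref{essentially coercive data corollary 2}) actually buys you, and it is why no analogue of ``$F=\textrm{conv}(h_{R'})$ locally'' is claimed in the paper.

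Separately, the unbounded-$G$ case is waved at but genuinely requires a different construction. When $G$ is unbounded, $m$ is not Lipschitz, so the estimate $m(x)-f(y)-\langle G(y),x-y\rangle\leq 2\|G\|_\infty|x-y|$ for $|x-y|\geq 1$ is unavailable and purely quadratic $\varphi_y$ no longer work: the far-field of $m$ can grow faster than any fixed quadratic centered at $y$. The paper deals with this by taking a $C^\infty$ convex $\psi$ with $m\leq\psi\leq m+\tfrac12$ from \cite{Azagra2013} and defining
$\varphi_y=\mathcal{M}_{1/8}\bigl(A_{k(y)}|\cdot-y|^2,\,\psi-f(y)-\langle G(y),\cdot-y\rangle\bigr)$
outside $B(0,3k(y))$ (Lemma \ref{smooth maxima}, Proposition \ref{properties of M(f,g)}), so that $\varphi_y$ tracks the quadratic near $y$ but switches to $\psi$-minus-affine far away; this family is then fed into Theorem \ref{Corollary to C11 Whitney thm for coercive convex functions}. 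This ingredient is not a routine extension of your bounded-$G$ computation and is missing from your proposal.
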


In the above theorems, as in the rest of the paper, $B(x, r)$ denotes the closed ball of center $x$ and radius $r$.

Of course inequality \eqref{every tangent function lies above all tangent planes corollary 2} reminds us of \eqref{every tangent parabola lies above all tangent planes}, but we should also note a very important difference between these conditions, as well as the asymmetry of the new condition \eqref{every tangent function lies above all tangent planes corollary 2}. Namely,
in condition \eqref{every tangent parabola lies above all tangent planes} both $z$ and $y$ run in all of the set $E$, while in condition \eqref{every tangent function lies above all tangent planes corollary 2} the point $z$ runs in all of $E$ but the location of the point $y$ is restricted to the intersection of $E$ with the ball $B(0, k)$, and the point $x$ is only asked to be in the ball $B(0, 4k)$, as opposed to all of $\R^n$ (and, of course, the condition's constant $A_k$ depends on $k$). Thus one can say that condition \ref{every tangent function lies above all tangent planes corollary 2} is global on the left-hand side of the inequality, but semi-global on the right-hand side (this explains our previous use of the term {\em asymmetry}). Furthermore, let us emphasize that condition \eqref{every tangent function lies above all tangent planes corollary 2} is {\em not equivalent} to saying that the restriction of the jet $(f, G)$ to the set $E\cap B(0,k)$ satisfies condition \eqref{every tangent parabola lies above all tangent planes} for each $k\in \N$.

Now let us proceed to study the general situation where we do not necessarily have $\textrm{span}\{G(x)-G(y) : x, y\in E\}=\R^n$. In this case, as we saw in \cite{AzagraMudarra2}, the possible presence of {\em corners at infinity}, makes things more complicated. If we are seeking $C^{1,1}_{\textrm{loc}}$ convex extensions, then we have to be even more careful: not only do we have to deal with such corners at infinity, but also with what we could call {\em H\"older wedges at infinity}, a terminology which is certainly vague and we do not intend to make precise but may become intuitively clear after having a look at the following examples.

\begin{ex}\label{example of Hoder wedge at infinity}
{\em Let $f, g:\R^2\to\R$ be defined by $$f(x,y)=\sqrt{ |x|^3 +e^{-2y}}, \quad \textrm{ and } \quad  g(x,y)=|x|^{3/2}.$$ Both are convex functions, and $f\in C^{1,1}_{\textrm{loc}}$, but $g\in C^{1, 1/2}_{\textrm{loc}}\setminus C^{1,1}_{\textrm{loc}}$. However, we have that $f\geq g$ and $\lim_{y\to\infty}f(x,y)=g(x, y)$.  We are tempted to say that $g$ is a H\"older wedge that supports $f$ at infinity. }
\end{ex}

\begin{ex}\label{example of jet with Holder wedge and no extension}
{\em Let $g(x, y)=|x|^{3/2}$, $(x,y) \in \R^2$. Let $E=\{(x, y)\in\R^2 : |x|\geq \min\{1, e^{y}\}\}$, and define $f$ and $G$ on $E$ by
$$
f=g \quad \text{on} \quad E,
$$
and
$$
G(x,y)=\nabla g(x,y)= \left(\frac{3}{2}|x|^{1/2}\textrm{sign}(x), 0\right) \quad \text{ if } (x,y) \in E.
$$
Then there is no convex function $F\in C^{1,1}_{\textrm{loc}}$ such that $F=f$ and $\nabla F=G$ on $E$, because for every convex function $\varphi:\R^2\to\R$ such that $\varphi=f$ on $E$ we must have $\varphi(x,y)=|x|^{3/2}$ on $\R^2$. As a matter of fact, for every pair of $C^1$ convex functions $\psi:\R^2\to\R$ and $\eta:\R\to\R$, we have that if $\psi(x,y)=\eta(x)$ for all $(x,y)\in E$ then $\psi(x,y)=\eta(x)$ for all $(x,y)\in\R^2$. Let us prove this assertion. We first claim that for every $(x_0, y_0)\in \R^2$ we have $\frac{\partial\psi}{\partial y}(x_0, y_0)=0$. Indeed, by convexity we have 
$$
\psi(x,y)\geq \psi(x_0, y_0)+a(x-x_0)+b(y-y_0)
$$
for all $(x,y)\in\R^2$, where we denote $\nabla\psi(x_0, y_0)=(a, b)$. Taking $(x, y)$ of the form $(x(t), y(t))=(2, t)$, $t\in\R$, and noting that $(2,t)\in E$ for all $t\in\R$, we obtain
$$
\eta(2)=\psi(2,t)\geq \psi(x_0, y_0)+a(2-x_0)+b(t-y_0)
$$
for all $t\in\R$, which is impossible unless $b=0$. So we have that $\frac{\partial \psi}{\partial y}(x,y)=0$ for all $(x,y)\in\R^2$, and therefore, for each $x\in\R$, the function $\R\ni y\mapsto \psi(x,y)\in\R$ does not depend on $y$. Since for every $(x,y)\in \R^2$ with $x\neq 0$ there exists some $y_0$ with $(x,y_0)\in E$, we deduce that $\psi(x,y)=\psi(x,y_0)=\eta(x)$. Thus $\psi(x,y)=\eta(x)$ for all $(x,y)\in\R^2$ with $x\neq 0$, hence by continuity also for all $(x,y)\in\R^2$.

Note that this example also shows that there are jets $(f, G)$ on $E$ such that: 1) they have $C^{1}$ convex extensions (even of class $C^{1, 1/2}$) to all of $\R^n$; 2) their restrictions to $E\cap B(0,k)$ satisfy condition $(CW^{1,1})$ of \cite{AzagraMudarra1, AzagraLeGruyerMudarra} (and in particular Whitney's condition for $C^{1,1}$ extension too) for each $k\in\N$; 3) and yet they do not have $C^{1,1}_{\textrm{loc}}$ convex extensions to all of $\R^n$. We thus see that there are global effects that may become very selective to prevent or admit the existence of convex extensions of a given jet in various differentiability classes.}
\end{ex}

\begin{ex}\label{example of jet with Holder wedge and many extensions}
{\em Let $g(x, y)=|x|^{3/2}$, $(x,y) \in \R^2$. Let $E=\{(x, y)\in\R^2 : |x|\geq e^{y}\}$, and define $f$ and $G$ on $E$ by
$$
f=g \quad \text{on} \quad E,
$$
and
$$
G(x,y)=\nabla g(x,y)= \left(\frac{3}{2}|x|^{1/2}\textrm{sign}(x), 0\right) \quad \text{ if } (x,y) \in E.
$$
We claim that there exist many convex functions $F\in C^{1,1}_{\textrm{loc}}$ such that $F=f$ and $\nabla F=G$ on $E$. It is not easy to give a direct proof of this assertion without applying Theorem \ref{Corollary 2 to the main coercive result} on a new, larger set $E$. We just note that this is a consequence of our next result (see the proof of Proposition \ref{there cannot be any extension method with a good control of the seminorms}(1) in Section 3 below for a detailed construction of a similar example). However, all of such extensions $F$ will be supported by a H\"older wedge at infinity, in the sense that for every $x\in\R\setminus\{0\}$ and every $y\leq \log |x|$ we have $F(x, y)=|x|^{3/2}$, and in particular $\lim_{y\to-\infty}F(x,y)=|x|^{3/2}$, and also $F(x, y)\geq |x|^{3/2}$ for all $(x,y)\in\R^2$. }
\end{ex}

\begin{rem}
{\em Let us emphasize the essential difference between Example \ref{example of jet with Holder wedge and no extension} and \ref{example of jet with Holder wedge and many extensions}. Thanks to the geometrical differences between the domains of the jets, in the second of these examples it is possible to add one more point and one more jet to our problem so as to obtain a new extension problem which can be solved by applying Theorem \ref{Corollary 2 to the main coercive result}, while in the first one this is impossible.
}
\end{rem}

\medskip

Before presenting our main theorem for the case that $\textrm{span}\{G(x)-G(y) : x, y\in E\}\neq\R^n$, we need a definition and a result from \cite{AzagraMudarra2, Azagra2013} which help us understand the global geometrical behavior of convex functions and provide us with a canonical representation that may be used to reduce problems about general convex functions to simpler problems about coercive convex functions.

\begin{defn}\label{defn essential coercivity}
{\em Let $Z$ be a Euclidean space, and $P:Z\to X$ be the orthogonal projection onto a subspace $X\subseteq Z$.
We will say that a function $f$ defined on a subset $S$ of $Z$ is {\em essentially $P$-coercive} provided that there exists a linear function $\ell:Z\to\R$ such that for every sequence $(x_k)_k\subset S$ with $\lim_{k\to\infty}|P(x_k)|=\infty$ one has
$$
\lim_{k\to\infty}\left(f-\ell\right)(x_k)=\infty.
$$
We will say that $f$ is {\em essentially coercive} whenever $f$ is essentially $I$-coercive, where $I:Z\to Z$ is the identity mapping.

For instance, a function $f:\R^n\to\R$ is {\em essentially coercive} provided there exists a linear function $\ell:\R^n\to\R$ such that 
$$
\lim_{|x|\to\infty}\left( f(x)-\ell(x)\right)=\infty.
$$

If $X$ is a linear subspace of $\R^n$, we will denote by $P_{X}:\R^n\to X$ the orthogonal projection, and we will say that $f:S\to \R$ is {\em coercive in the direction of $X$} whenever $f$ is $P_{X}$-coercive. 

We will denote by $X^{\perp}$ the orthogonal complement of $X$ in $\R^n$. For a subset $V$ of $\R^n$, $\textrm{span}(V)$ will stand for the linear subspace spanned by the vectors of $V$. 

We also recall that, for a convex function $f:\R^n\to\R$, the subdifferential of $f$ at a point $x\in\R^n$ is defined as
$$
\partial f(x)=\{\xi\in\R^n :  f(z)\geq f(x)+ \langle \xi, z-x\rangle \textrm{ for all } z\in\R^n\},
$$
and each $\xi\in\partial f(x)$ is called a subgradient of $f$ at $x$. }
\end{defn}

\begin{thm}\label{rigid global behavior of convex functions}[See the proofs of \cite[Theorem 1.11]{AzagraMudarra2} and \cite[Lemma 4.2]{Azagra2013}]
For every convex function $f:\R^n\to\R$ there exist a unique linear subspace $X_f$ of $\R^n$, a unique vector $v_{f}\in X_{f}^{\perp}$, and a unique essentially coercive function $c_{f}:X_f\to\R$ such that $f$ can be written in the form
$$
f(x)=c_{f}(P_{X_{f}}(x)) +\langle v_{f}, x\rangle, \quad x\in\R^n.
$$
The subspace $X_{f}$ coincides with $\textrm{span}\{u-w : u\in\partial f(x), w\in\partial f(y), x, y\in\R^n\}$, and the vector $v_f$ coincides with $Q_{X_f}(\xi_0)$ for any $\xi_0\in\partial f(x_0)$, $x_0\in\R^n$, where $Q_{X_f}=I-P_{X_f}$ is the orthogonal projection of $\R^n$ onto $X_{f}^{\perp}$.
Moreover, if $Y$ is a linear subspace of $\R^n$ such that $f$ is essentially coercive in the direction of $Y$, then $Y\subseteq X_f$.
\end{thm}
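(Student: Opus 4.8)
The plan is to build the decomposition explicitly — this simultaneously yields the two displayed formulas for $X_f$ and $v_f$ — and then to obtain uniqueness and the ``moreover'' clause as formal consequences. Put $X_f:=\operatorname{span}\{u-w : u\in\partial f(x),\ w\in\partial f(y),\ x,y\in\R^n\}$; this is well defined because a finite convex function on $\R^n$ is locally Lipschitz and everywhere subdifferentiable. For any subgradients $\xi\in\partial f(x)$ and $\xi'\in\partial f(x')$ we have $\xi-\xi'\in X_f$, hence $Q_{X_f}(\xi)=Q_{X_f}(\xi')$; call this common vector $v_f\in X_f^{\perp}$ (so $v_f=Q_{X_f}(\xi_0)$ for any $\xi_0\in\partial f(x_0)$). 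Set $g:=f-\langle v_f,\cdot\rangle$, so that $\partial g(x)=\partial f(x)-v_f$ and $Q_{X_f}(\xi-v_f)=v_f-v_f=0$; thus every subgradient of $g$ lies in $X_f$. Fix $e\in X_f^{\perp}$ and $x\in\R^n$; the one-variable convex function $\phi(t):=g(x+te)$ satisfies $\langle\xi,e\rangle\in\partial\phi(t)$ for every $\xi\in\partial g(x+te)$, and this number is $0$ since $\partial g(x+te)\subseteq X_f\perp e$. Hence $0\in\partial\phi(t)$ for all $t$, so $\phi$ is constant; running $e$ over an orthonormal basis of $X_f^{\perp}$ gives $g(x)=g(P_{X_f}x)$ for all $x$. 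Setting $c_f:=g|_{X_f}$, a convex function on the Euclidean space $X_f$, we obtain $f(x)=c_f(P_{X_f}x)+\langle v_f,x\rangle$, with $X_f$ and $v_f$ exactly in the stated forms.

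\textbf{Essential coercivity of $c_f$.} By the subdifferential chain rule (using that $P_{X_f}$ is the orthogonal projection onto $X_f$), $\partial f(x)=\partial c_f(P_{X_f}x)+v_f$ with $\partial c_f(z)\subseteq X_f$ for all $z$, so $\bigcup_x\partial f(x)=v_f+\bigcup_{z\in X_f}\partial c_f(z)$; consequently the differences of subgradients of $c_f$ span $X_f$, i.e.\ the nonempty set $D:=\bigcup_{z}\partial c_f(z)\subseteq X_f$ has affine hull $X_f$. From the standard inclusions $\operatorname{ri}(\operatorname{dom}c_f^{*})\subseteq D\subseteq\operatorname{dom}c_f^{*}$ (valid since $c_f=c_f^{**}$), the convex set $\operatorname{dom}c_f^{*}$ also has affine hull $X_f$, hence nonempty interior in $X_f$, and $\operatorname{int}(\operatorname{dom}c_f^{*})\subseteq D$. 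Choose $\eta$ in that interior and $\delta>0$ with $B(\eta,\delta)\cap X_f\subseteq\operatorname{dom}c_f^{*}$; being finite and convex there, $c_f^{*}$ is bounded above by some $C$ on $B(\eta,\delta/2)\cap X_f$, and therefore, for all $z\in X_f$,
\[
c_f(z)=c_f^{**}(z)\ \ge\ \sup_{\xi\in B(\eta,\delta/2)\cap X_f}\bigl(\langle\xi,z\rangle-c_f^{*}(\xi)\bigr)\ \ge\ \langle\eta,z\rangle+\tfrac{\delta}{2}|z|-C .
\]
Thus $c_f-\langle\eta,\cdot\rangle\to+\infty$ as $|z|\to\infty$ in $X_f$, so $c_f$ is essentially coercive. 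This completes the existence part.

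\textbf{Uniqueness and maximality.} Let $f(x)=c(P_Xx)+\langle v,x\rangle$ be any decomposition of the required type. The same chain rule gives $\partial f(x)=\partial c(P_Xx)+v$ with $\partial c(z)\subseteq X$, so all subgradients of $f$ lie in $v+X$, whence $X_f\subseteq X$. If the differences of subgradients of $c$ spanned only a proper subspace $W\subsetneq X$, then the construction above applied to $c$ would make $t\mapsto c(z+te)$ affine, with slope independent of $z$, for any fixed $0\neq e\in X\cap W^{\perp}$; then $t\mapsto(c-\ell)(z_0+te)$ would be affine for every linear $\ell$ and could not tend to $+\infty$ as $t\to\pm\infty$, contradicting essential coercivity of $c$. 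Hence those differences span $X$, and since they coincide with the differences of subgradients of $f$ we get $X=X_f$. Then $Q_{X_f}(\xi_0)=Q_X(\xi_0)=v$ for any $\xi_0\in\partial f(x_0)\subseteq v+X$, so $v=v_f$, and finally $c\circ P_X=f-\langle v_f,\cdot\rangle=c_f\circ P_{X_f}$ forces $c=c_f$. For the ``moreover'' clause, assume $f$ is essentially $P_Y$-coercive with linear witness $\ell$ and suppose $Y\not\subseteq X_f$; pick $e\in Y\setminus X_f$ and put $e_2:=Q_{X_f}(e)\neq 0$, so that $\langle P_Y(e_2),e\rangle=\langle e_2,e\rangle=|e_2|^2>0$ and hence $|P_Y(\pm k e_2)|=k|P_Y(e_2)|\to\infty$. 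But $P_{X_f}(\pm k e_2)=0$, so $(f-\ell)(\pm k e_2)=c_f(0)\pm k\bigl(\langle v_f,e_2\rangle-\ell(e_2)\bigr)$ is affine in $k$ with opposite slopes along the two sequences and cannot tend to $+\infty$ along both — a contradiction. Therefore $Y\subseteq X_f$.

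\textbf{Main obstacle.} I expect the heart of the matter to be the second step: converting the purely linear-algebraic fact that ``the subgradient differences of $c_f$ span $X_f$'' into the quantitative linear lower bound certifying essential coercivity. This passes through the nonemptiness of the interior of $\operatorname{dom}c_f^{*}$, local boundedness of finite convex functions, and the biconjugation identity $c_f=c_f^{**}$; the remaining steps are bookkeeping with the subdifferential chain rule and one-variable convexity.
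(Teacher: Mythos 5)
Your proof is correct, and it builds the decomposition from the two formulas the theorem asserts for $X_f$ and $v_f$, which is exactly the right order of ideas. The paper itself does not give a proof of this result; it refers the reader to the proofs of \cite[Theorem 1.11]{AzagraMudarra2} and \cite[Lemma 4.2]{Azagra2013}, so a line-by-line comparison is not possible here. That said, your route — identify $X_f$ as the span of subgradient differences, project off the constant ``drift'' $v_f$, show constancy of $g:=f-\langle v_f,\cdot\rangle$ along lines perpendicular to $X_f$ via one-variable subdifferentials, and then certify essential coercivity of $c_f$ by observing that $\operatorname{dom}c_f^{*}$ has nonempty interior in $X_f$ — is the standard structural argument behind those references, and each step is carried out correctly (the chain rule $\partial f=\partial c_f\circ P_{X_f}+v_f$ holds since $P_{X_f}$ is surjective onto $\operatorname{dom}c_f=X_f$, the inclusions $\operatorname{ri}(\operatorname{dom}c_f^{*})\subseteq\operatorname{range}(\partial c_f)\subseteq\operatorname{dom}c_f^{*}$ and $c_f=c_f^{**}$ are the Rockafellar facts you cite, and the quantitative lower bound $c_f(z)\ge\langle\eta,z\rangle+\tfrac{\delta}{2}|z|-C$ follows as you wrote). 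The uniqueness argument (any competing $X$ must contain $X_f$, and if it were strictly larger, $c$ would be affine in some direction inside $X$, contradicting its essential coercivity) and the ``moreover'' argument (take $e_2=Q_{X_f}(e)$ with $e\in Y\setminus X_f$ and note that $(f-\ell)(\pm k e_2)$ is affine in $k$ with opposite slopes, so it cannot blow up along both sequences even though $|P_Y(\pm k e_2)|\to\infty$) are both complete and correct. In short: a correct proof, following the expected route, with the conjugate-function step being the one genuinely nontrivial analytic ingredient — exactly as you flagged.
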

The above characterization of $X_f$ and $v_f$ does not appear in the statement of \cite[Theorem 1.11]{AzagraMudarra2}, but it is implicit in its proof.

Now we are ready to state the most important result of this paper.

\begin{thm}\label{First variant of MainTheorem with P}
Given an arbitrary nonempty subset $E$ of $\R^n$, a linear subspace $X\subset\R^n$, the orthogonal projection $P:=P_{X}:\R^n\to X$, and two functions $f:E \to \R, \: G: E \to \R^n$, the following is true. There exists a convex function $F:\R^n\to\R$ of class $C^{1,1}_{\textrm{loc}}$ such that $F_{|_E}=f$, $(\nabla F)_{|_E} =G$, and $X_{F}=X$, if and only if the following conditions are satisfied.
\begin{itemize}
\item[$(i)$] $Y:=\textrm{span}\left(\{G(y)-G(z) : y, z\in E\}\right)\subseteq X$.
\item[$(ii)$] If $\ell:=\dim Y< d:= \dim X$, then there exist points $p_1, \ldots, p_{d-\ell} \in \R^n \setminus E$, numbers $\beta_1,\ldots, \beta_{d-\ell} \in \R$, vectors $w_1, \ldots, w_{d-\ell} \in \R^n$, and a sequence of numbers $A_k\geq 2$, $k\in\N$, such that, 
denoting: $E^{*}:=E\cup\{p_1, ..., p_{d-\ell}\}$; $t_y:=f(y)$ and $\xi_y:=G(y)$ for $y\in E$; $t_y=\beta_j$ and $\xi_y=w_j$ for $y=p_j$, $j=1, ..., d-\ell$, we have that
\begin{equation}\label{completing the span of derivatives 2}
\textrm{span}\{\xi_y-\xi_z : y, z\in E^{*}\}=X,
\end{equation}
and
\begin{equation}\label{every tangent function lies above all tangent planes MainthmwithP 2}
t_z+\langle \xi_z, x-z\rangle  \leq t_y+\langle \xi_y, x-y\rangle +\frac{A_k}{2}|P(x-y)|^2 
\end{equation}
$$
\textrm{ for every } z\in E^{*}, y\in E^{*}\cap P^{-1}(B_X(0, k)), \, x\in P^{-1}(B_X(0, 4k)).
$$
\item[$(iii)$] If $\ell=d$, then the preceding condition holds with $E$ in place of $E^{*}$ (no need to add new data).
\end{itemize}

Furthermore, if $G$ is bounded then a formula for such an extension $F$ is given by
\begin{equation}
F(x)=\textrm{conv}\left( x\mapsto \inf_{y\in E^{*}}\{t_y+\langle \xi_y, x-y\rangle +\frac{1}{2}\left( A_{k(y)} +4\|G\|_{\infty}+1\right) |P(x-y)|^2\}\right)
\end{equation}
where $k(y)$ is defined as the first positive integer such that $y\in P^{-1}(B_X(0, k))$, and $\|G\|_{\infty}:=\sup_{x\in E}|G(x)|$.
\end{thm}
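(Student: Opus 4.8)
The plan is to transfer the whole problem, in both directions, to the \emph{coercive} situation on the subspace $X$ — where it is already solved by Theorem~\ref{Corollary 2 to the main coercive result} — using the canonical representation of Theorem~\ref{rigid global behavior of convex functions}.

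\emph{Necessity.} Suppose $F\in C^{1,1}_{\textrm{loc}}(\R^n)$ is convex with $F_{|E}=f$, $(\nabla F)_{|E}=G$ and $X_F=X$. Since $F$ is differentiable, the description of $X_F$ in Theorem~\ref{rigid global behavior of convex functions} becomes $X=\textrm{span}\{\nabla F(x)-\nabla F(y):x,y\in\R^n\}$, and restricting to $E$ gives $(i)$. If $\ell<d$, fix $z_0\in E$: then $\textrm{span}\{\nabla F(x)-G(z_0):x\in\R^n\}=X$ while $\textrm{span}\{G(y)-G(z_0):y\in E\}=Y$ has dimension $\ell<d$, and a point of $E$ only contributes a vector of $Y$, so there must exist $p_1,\dots,p_{d-\ell}\in\R^n\setminus E$ with $\textrm{span}\big(\{G(y)-G(z_0):y\in E\}\cup\{\nabla F(p_j)-G(z_0)\}\big)=X$; setting $\beta_j=F(p_j)$, $w_j=\nabla F(p_j)$, $E^{*}=E\cup\{p_j\}$ gives \eqref{completing the span of derivatives 2}. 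For \eqref{every tangent function lies above all tangent planes MainthmwithP 2}, its left-hand side is $\le F(x)$ by the subgradient inequality, so it remains to bound $F(x)-F(y)-\langle\nabla F(y),x-y\rangle$ from above; writing $F=c_F\circ P+\langle v_F,\cdot\rangle$ with $v_F\in X^{\perp}$ and $c_F=F_{|X}\in C^{1,1}_{\textrm{loc}}(X)$, this difference equals $c_F(P(x))-c_F(P(y))-\langle\nabla c_F(P(y)),P(x)-P(y)\rangle$, which is at most $\tfrac{A_k}{2}|P(x-y)|^2$ as soon as $A_k\ge\max\{2,\textrm{Lip}(\nabla c_F;B_X(0,4k))\}$, since for $x\in P^{-1}(B_X(0,4k))$ and $y\in P^{-1}(B_X(0,k))$ the segment $[P(y),P(x)]$ lies in $B_X(0,4k)$. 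The case $\ell=d$ is identical with $E^{*}=E$.

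\emph{Sufficiency.} Assume the conditions hold, with data $(t_y,\xi_y)_{y\in E^{*}}$ and constants $A_k$. I would first check that the hypotheses already force a clean reduction to $X$: by \eqref{completing the span of derivatives 2} all differences $\xi_y-\xi_z$ lie in $X$, so $v:=(I-P)(\xi_y)\in X^{\perp}$ is independent of $y\in E^{*}$; and if $y,y'\in E^{*}$ satisfy $P(y)=P(y')$ then, for suitable $k$, feeding into \eqref{every tangent function lies above all tangent planes MainthmwithP 2} the test points $x=y'+s(y-y')$ and letting $s\to\pm\infty$ yields $t_y-t_{y'}=\langle v,y-y'\rangle$, while the test points $x=y'+h$ with $P(h)$ a small positive multiple of $\xi_y-\xi_{y'}$ force $\xi_y=\xi_{y'}$. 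Hence $\tilde f(P(y)):=t_y-\langle v,y\rangle$ and $\tilde G(P(y)):=\xi_y-v=P(\xi_y)$ are well defined on $\tilde E:=P(E^{*})\subset X$; using $\langle\xi_z,x-z\rangle=\langle P(\xi_z),P(x-z)\rangle+\langle v,x-z\rangle$ and subtracting $\langle v,x\rangle$ from both sides, conditions \eqref{completing the span of derivatives 2} and \eqref{every tangent function lies above all tangent planes MainthmwithP 2} translate, in $X\cong\R^{d}$, into exactly \eqref{essentially coercive data corollary 2} and \eqref{every tangent function lies above all tangent planes corollary 2} for $(\tilde f,\tilde G)$ on $\tilde E$, with the same constants $A_k$. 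Theorem~\ref{Corollary 2 to the main coercive result} then gives a convex $c\in C^{1,1}_{\textrm{loc}}(X)$ with $c_{|\tilde E}=\tilde f$ and $(\nabla c)_{|\tilde E}=\tilde G$, and since $\textrm{span}\{\nabla c(a)-\nabla c(b):a,b\in\tilde E\}=\textrm{span}\{\xi_y-\xi_z:y,z\in E^{*}\}=X$ we get $X_c=X$. Finally $F(x):=c(P(x))+\langle v,x\rangle$ is convex and of class $C^{1,1}_{\textrm{loc}}(\R^n)$, satisfies $F_{|E}=f$ and $(\nabla F)_{|E}=G$ by the very definitions of $v,\tilde f,\tilde G$, and $X_F=X_c=X$ because $P$ maps $\R^n$ onto $X$. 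When $G$ is bounded, so is $\tilde G$ on $\tilde E$, and pushing the explicit formula of Theorem~\ref{Corollary 2 to the main coercive result} through $F=c\circ P+\langle v,\cdot\rangle$ turns $|u-a|^2$ into $|P(x-y)|^2$ and yields the displayed formula for $F$; matching the stated coefficients $\tfrac12(A_{k(y)}+4\|G\|_{\infty}+1)$ requires only enlarging the $A_k$ beforehand — admissible, since the conditions persist under increasing them — so as to absorb the finitely many extra norms $|w_j|$.

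\emph{Main obstacle.} With the coercive case available as a black box, the crux is the reduction in the sufficiency direction: squeezing out of the asymmetric, ball-localized inequality \eqref{every tangent function lies above all tangent planes MainthmwithP 2} the fact that $\tilde f$ and $\tilde G$ are genuinely well defined on $P(E^{*})$ — this is where the two limiting arguments in the $X^{\perp}$ direction are needed — and verifying that the translated inequality is \emph{literally} \eqref{every tangent function lies above all tangent planes corollary 2}, so that Theorem~\ref{Corollary 2 to the main coercive result} applies verbatim rather than merely in spirit. In the necessity direction, the only mildly delicate step is arguing that the span-completing points $p_j$ can be chosen outside $E$; everything else is bookkeeping with the decomposition $F=c_F\circ P+\langle v_F,\cdot\rangle$.
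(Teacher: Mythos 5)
Your proof is correct, and the core strategy — project onto $X$, solve the essentially coercive problem there, and lift back via $F=c\circ P+\langle v,\cdot\rangle$ — is the same one the paper uses (its Section 5.7 says to repeat the proof of Theorem~\ref{Corollary 2 to the main coercive result} with $m^{*}$ and $P$, which behind the scenes performs exactly this reduction through Theorem~\ref{MainTheorem with P}). Where you differ is in the organization: instead of reconstructing the $\varphi_y$ family on $X$ via smooth maxima and invoking the technical Theorem~\ref{MainTheorem with P}, you reduce the general statement directly to Theorem~\ref{Corollary 2 to the main coercive result} applied on $X\cong\R^d$ and use it as a black box. This is cleaner and more modular, and it buys you a more elementary proof of well‑definedness of $(\tilde f,\tilde G)$ on $P(E^{*})$: you extract it from the inequality \eqref{every tangent function lies above all tangent planes MainthmwithP 2} itself, whereas the paper derives it from Theorem~\ref{rigid global behavior of convex functions} applied to $m^{*}$ after first proving $X_{m^{*}}=X$. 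Two small remarks on that argument: the phrase ``letting $s\to\pm\infty$'' is not doing any work, since the coefficient of $s$ is $\langle\xi_{y'}-\xi_y,y-y'\rangle=0$ automatically ($\xi_y-\xi_{y'}\in X$, $y-y'\in X^{\perp}$); the needed conclusion $t_y-t_{y'}=\langle v,y-y'\rangle$ follows simply from the inequality together with the $y\leftrightarrow y'$ symmetry, and then your small‑$h$ perturbation does force $\xi_y=\xi_{y'}$. Finally, for the displayed formula you rightly observe that $\|\tilde G\|_{\infty}$ depends on $\max_j|w_j|$ and not only on $\|G\|_{\infty}=\sup_E|G|$, so the coefficient in the statement is obtained only after enlarging the $A_k$ to absorb the finitely many $|w_j|$'s; this hand‑wave is legitimate since the $A_k$ are existentially quantified and the hypotheses are stable under increasing them, but the point deserves to be spelled out.
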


In particular, by considering the case that $X=\R^n$, we obtain a characterization of the $1$-jets which admit $C^1$ convex extensions that are essentially coercive on $\R^n$, thus improving Theorem \ref{Corollary 2 to the main coercive result} (which does not directly address situations like that of Example \ref{example of jet with Holder wedge and many extensions}).

The rest of this paper is organized as follows. In Section 2 we provide more technical and more general versions of the above results whose statements have the advantage of providing explicit formulas for the extensions $F$. In Section 3 we study the natural and important question whether or not  one can obtain $C^{1,1}_{\textrm{loc}}$ convex extensions whose gradients have local Lipschitz constants that can be controlled by the local Lipschitz constants of the gradients of the functions $\varphi_y$ appearing in the statement of Theorem \ref{Corollary to C11 Whitney thm for coercive convex functions} below (or equivalent, by the numbers $A_k$ in the statements of Theorems \ref{Corollary 2 to the main coercive result} and \ref{First variant of MainTheorem with P}). As we will see, and in sharp contrast to the $C^{1,1}$ case that we studied in \cite{AzagraLeGruyerMudarra}, neither our method of extension nor any other can achieve this. Nonetheless we also obtain some positive results for families of functions which are {\em uniformly essentially coercive} in an appropriate sense. In Section 4 we will present some applications of our results. Finally in Section 5 we give the proofs of the main theorems.

\medskip

\section{Technical versions of the main results, with explicit formulas}

In this section we give some versions of our main results which have the advantage of providing us with explicit formulas for the extension functions $F$ (and the disadvantage that their statements involve the existence of families of functions $\varphi_y$ which we are not told how to find). These technical versions of the main results will also help us understand their proofs better, splitting them into two parts which use different methods. In order to see how we can construct appropriate families of functions $\varphi_y$ that satisfy the assumptions of Theorem \ref{Corollary to C11 Whitney thm for coercive convex functions}  starting from condition \eqref{every tangent function lies above all tangent planes corollary 2} in Theorem \ref{Corollary 2 to the main coercive result}, see Section 5.3 below. 

Let us begin with the easier case that $\textrm{span}\{G(y)-G(z) : y, z\in E\}=\R^n$.

\begin{thm}\label{Corollary to C11 Whitney thm for coercive convex functions}
Let $E$ be an arbitrary nonempty subset of $\R^n$. Let $f:E\to\R$, $G:E\to\R^n$ be functions  such that 
\begin{equation}\label{essentially coercive data corollary}
\textrm{span}\{G(x)-G(y) : x, y\in E\}=\R^n.
\end{equation}
Then there exists a convex function $F\in C^{1,1}_{\textrm{loc}}(\R^n)$ such that $F_{|_E}=f$ and $(\nabla F)_{|_E}=G$ if and only if the following condition is satisfied. For each $y\in E$ there exists a (not necessarily convex) $C^{1,1}_{\textrm{loc}}$ function $\varphi_y:\R^n\to [0, \infty)$ such that:
\begin{equation}\label{phiy equals 0 at y}
\varphi_{y}(y)=0, \nabla\varphi_{y}(y)=0;
\end{equation}
\begin{equation}\label{finite sup of Lip constants of phiy on balls corollary}
M_R:=\sup\left\{\frac{|\nabla\varphi_{y}(x)-\nabla\varphi_{y}(z)|}{|x-z|} \, : x, z\in B(0, R), x\neq z,  \, y\in E\cap B(0,R)\right\} <\infty
\end{equation}
for every $R>0$, and
\begin{equation}\label{every tangent function lies above all tangent planes corollary}
f(z)+\langle G(z), x-z\rangle\leq f(y)+\langle G(y), x-y\rangle +\varphi_{y}(x) 
\end{equation}
for every $y, z\in E$ and every $x\in \R^n$.
Moreover, when these conditions are satisfied, the extension $F$ can be taken to be essentially coercive, and in fact, for every number $a>0$ the formula
\begin{equation}\label{formula for Fa in thm2}
F=F_a=\textrm{conv}\left(x\mapsto \inf_{y\in E}\left\{f(y)+\langle G(y), x-y\rangle+\varphi_{y}(x)+a|x-y|^2\right\}\right)
\end{equation}
defines such an essentially coercive $C^{1,1}_{\textrm{loc}}$ convex extension of the jet $(f, G)$ to $\R^n$.
\end{thm}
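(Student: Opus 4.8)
The plan is to prove the two implications separately, and in both directions to reduce matters to the globally $C^{1,1}$ result of Theorem~\ref{reformulation from AzagraLeGruyerMudarra main result} applied on balls, then patch the information together. For the \emph{necessity} direction, suppose $F\in C^{1,1}_{\textrm{loc}}(\R^n)$ is a convex extension of $(f,G)$. For each $y\in E$ I would define $\varphi_y(x):=F(x)-f(y)-\langle G(y),x-y\rangle$. Since $\nabla F(y)=G(y)$ and $F$ is convex, $\varphi_y\geq 0$, $\varphi_y(y)=0$, $\nabla\varphi_y(y)=0$; and $\varphi_y\in C^{1,1}_{\textrm{loc}}$ because $F$ is. The uniform-on-balls Lipschitz bound \eqref{finite sup of Lip constants of phiy on balls corollary} holds because $\nabla\varphi_y(x)-\nabla\varphi_y(z)=\nabla F(x)-\nabla F(z)$ does not depend on $y$ at all, so $M_R=\textrm{Lip}\bigl(\nabla F_{|B(0,R)}\bigr)<\infty$. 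Finally \eqref{every tangent function lies above all tangent planes corollary} is exactly the statement that $f(z)+\langle G(z),x-z\rangle=F(z)+\langle\nabla F(z),x-z\rangle\leq F(x)=f(y)+\langle G(y),x-y\rangle+\varphi_y(x)$, which is the subgradient inequality for the convex function $F$. This direction is essentially free.

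For the \emph{sufficiency} direction, assume the $\varphi_y$ exist and fix $a>0$. Set $g(x):=\inf_{y\in E}\{f(y)+\langle G(y),x-y\rangle+\varphi_y(x)+a|x-y|^2\}$ and $F=F_a:=\textrm{conv}(g)$. The first task is to check $g$ is real-valued (not $-\infty$) and that $F$ is finite, convex, and essentially coercive: here one uses \eqref{essentially coercive data corollary} to pick finitely many $y_i\in E$ whose gradients $G(y_i)$ affinely span, so that $g(x)\leq \min_i\{f(y_i)+\langle G(y_i),x-y_i\rangle+\varphi_{y_i}(x)+a|x-y_i|^2\}$ grows at least like $a\,|x|^2/C$ minus a linear term in any direction, forcing $F$ to be essentially coercive and finite. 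Lower-boundedness by an affine function follows from \eqref{every tangent function lies above all tangent planes corollary}: for fixed $z_0\in E$, each term in the infimum is $\geq f(z_0)+\langle G(z_0),x-z_0\rangle$, so $g\geq$ that affine function and hence so does $F$. Next, $F=f$ and $\nabla F=G$ on $E$: evaluating the defining infimum at $y=z\in E$ gives $g(z)\leq f(z)$, while \eqref{every tangent function lies above all tangent planes corollary} gives $g(x)\geq f(z)+\langle G(z),x-z\rangle$ for all $x$, a convex minorant equal to $f(z)$ at $z$, so $F(z)=f(z)$ and $G(z)\in\partial F(z)$; once $F$ is shown to be differentiable this pins down $\nabla F(z)=G(z)$.

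The heart of the matter, and the main obstacle, is the \emph{$C^{1,1}_{\textrm{loc}}$ regularity of $F$}. The strategy is local: fix $R>0$ and show $\nabla F$ is Lipschitz on $B(0,R)$ with a constant depending only on $R$, $a$, $M_{R'}$ for a slightly larger $R'$, and the data on $E\cap B(0,R')$. The key point is that for $x$ in a bounded region, the infimum defining $g(x)$ is ``achieved or nearly achieved'' only by $y$ in a bounded region (because of the $a|x-y|^2$ penalty, together with the affine lower bound, any $y$ far from $x$ gives a value much larger than $g(x)$), so only the functions $\varphi_y$ with $y\in E\cap B(0,R')$ matter near $B(0,R)$, and for those the modulus \eqref{finite sup of Lip constants of phiy on balls corollary} is uniformly controlled by $M_{R'}$. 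Thus $g$ locally agrees with an infimum of a \emph{uniformly $C^{1,1}$} family of functions of the form $x\mapsto f(y)+\langle G(y),x-y\rangle+\varphi_y(x)+a|x-y|^2$, each of which has $\nabla^2$ bounded below by (roughly) $2aI - M_{R'}I$ in the viscosity sense and above by $(M_{R'}+2a)I$; after increasing $a$ if necessary — or, as in the theorem, taking $a$ arbitrary and absorbing — these functions are uniformly convex on the relevant ball up to a controlled error, and this is precisely the situation handled in \cite{AzagraLeGruyerMudarra} (the proof of Theorem~\ref{reformulation from AzagraLeGruyerMudarra main result}): the convex envelope of a uniformly $C^{1,1}$, uniformly convex family is again $C^{1,1}$ with a constant controlled by the family's. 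I would therefore cite/adapt the localized version of the argument behind Theorem~\ref{reformulation from AzagraLeGruyerMudarra main result} (or of \cite[Theorem 2.4]{AzagraLeGruyerMudarra}), being careful that the envelope is taken globally but the $C^{1,1}$ estimate is only needed — and only valid — locally; the delicate bookkeeping is checking that no ``far-away'' $y$ contributes to $\partial F(x)$ for $x\in B(0,R)$, which is where the quadratic term $a|x-y|^2$ and the affine lower bound do the work. Differentiability of $F$ everywhere (hence $\nabla F=G$ on $E$, upgrading $G(z)\in\partial F(z)$) then follows from the standard fact that a convex function with a single subgradient at each point, or one sandwiched appropriately, is $C^1$; combined with the local Lipschitz estimates this gives $F\in C^{1,1}_{\textrm{loc}}$.
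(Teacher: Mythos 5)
Your necessity direction and the overall structure of the sufficiency direction match the paper closely, but the crux of the sufficiency argument — the $C^{1,1}_{\textrm{loc}}$ regularity of $F=\textrm{conv}(g)$ — has a genuine gap.

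You correctly observe that the infimum over $y\in E$ defining $g(x)$ localizes (for $x$ in a bounded ball, only $y$ in a slightly larger ball matter, thanks to the $a|x-y|^2$ penalty and the affine lower bound $g\geq m$). That is the content of the paper's Lemma~\ref{if varphiy then varphitilde restricts}. But from there you try to reduce to the globally $C^{1,1}$ machinery of Theorem~\ref{reformulation from AzagraLeGruyerMudarra main result}, phrased as ``the convex envelope of a uniformly $C^{1,1}$, uniformly convex family is again $C^{1,1}$.'' This does not apply: first, for small $a$ (and the theorem claims the formula works for \emph{every} $a>0$) the functions $f(y)+\langle G(y),x-y\rangle+\varphi_y(x)+a|x-y|^2$ need not be convex at all, since $\varphi_y$ is only $C^{1,1}_{\textrm{loc}}$ and may have negative curvature of size $M_{R'}$; and your remedy of ``increasing $a$'' contradicts the uniformity claim. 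Second, and more fundamentally, the convex envelope is a \emph{global} operation, and the $C^{1,1}$ result from \cite{AzagraLeGruyerMudarra} you invoke relies on a single global constant $M$; it simply has no local version that you can quote. The step that is missing is what the paper isolates as Lemma~\ref{the convex envelope is smooth}: a Kirchheim--Kristensen-type argument showing that if $g$ is continuous, \emph{coercive}, and satisfies $g(x+h)+g(x-h)-2g(x)\leq C_R|h|^2$ for $x,h\in B(0,R)$, then $\textrm{conv}(g)$ satisfies the same kind of estimate with a constant $C'_R$. The mechanism is not that far-away $y$ in $E$ don't contribute (you correctly handle that), but that for $x\in B(0,R)$ any near-optimal Carath\'eodory representation $x=\sum_{i=1}^{n+1}\lambda_i x_i$ with $\sum\lambda_i g(x_i)\approx F(x)$ must have its dominant point $x_1$ (the one with $\lambda_1\geq\frac1{n+1}$) inside a controlled ball $B(0,R')$ — this is where coercivity enters — and then one bounds $F(x\pm h)-F(x)$ using the semiconcavity of $g$ at $x_1$ alone, letting the other $x_i$ drop out. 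Without spelling out this containment argument, the passage from ``$g$ has local second-order upper bounds'' to ``$F=\textrm{conv}(g)$ has local second-order upper bounds'' is not justified.

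A smaller issue: your paragraph on essential coercivity is muddled. An upper bound of $g$ by a min of quadratics does not force $F$ to be coercive. What actually gives coercivity is the lower bound $F\geq m$ together with the fact (from Theorem~\ref{rigid global behavior of convex functions} and hypothesis \eqref{essentially coercive data corollary}) that $m(x)=\sup_{z\in E}\{f(z)+\langle G(z),x-z\rangle\}$ is essentially coercive; the paper then normalizes so that $m\geq 0$ and $m\to\infty$ before running the envelope argument, since Lemma~\ref{the convex envelope is smooth} requires $g\to\infty$.
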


As for the most general situation that $\textrm{span}\{G(y)-G(z) : y, z\in E\}$ does not necessarily coincide with $\R^n$, we have the following technical version of Theorem \ref{First variant of MainTheorem with P}.

\begin{thm}\label{MainTheorem with P}
Given an arbitrary nonempty subset $E$ of $\R^n$, a linear subspace $X\subset\R^n$, the orthogonal projection $P:=P_{X}:\R^n\to X$, and two functions $f:E \to \R, \: G: E \to \R^n$, the following is true. There exists a convex function $F:\R^n\to\R$ of class $C^{1,1}_{\textrm{loc}}$ such that $F_{|_E}=f$, $(\nabla F)_{|_E} =G$, and $X_{F}=X$, if and only if the following conditions are satisfied.
\begin{itemize}
\item[$(i)$] $Y:=\textrm{span}\left(\{G(y)-G(z) : y, z\in E\}\right)\subseteq X$.
\item[$(ii)$] If $k:=\dim Y< d:= \dim X$, then there exist points $p_1, \ldots, p_{d-k} \in \R^n \setminus E$, numbers $\beta_1,\ldots, \beta_{d-k} \in \R$, and vectors $w_1, \ldots, w_{d-k} \in \R^n$ such that for every $y\in E^{*}:=E\cup\{p_1, ..., p_{d-k}\}$ there exists a (not necessarily convex) function $\varphi_{y}:X\to [0, \infty)$ of class $C^{1,1}_{\textrm{loc}}$ such that,
denoting: $t_y:=f(y)$ and $\xi_y:=G(y)$ for $y\in E$; $t_y=\beta_j$ and $\xi_y=w_j$ for $y=p_j$, $j=1, ..., d-k$, 
we have that:
\begin{equation}\label{completing the span of derivatives}
\textrm{span}\{\xi_y-\xi_z : y, z \in E^{*}\}=X;
\end{equation}
\begin{equation}\label{phiy equals 0 at y corollary}
\varphi_{y}(P(y))=0, \nabla\varphi_{y}(P(y))=0;
\end{equation}
\begin{equation}\label{finite sup of Lip constants of phiy on balls main thm}
\sup\left\{\frac{|\nabla\varphi_{y}(u)-\nabla\varphi_{y}(v)|}{|u-v|} \, : \, y\in E^{*}\cap P^{-1}(B_{X}(0, R)), u, v\in B_{X}(0, R), u\neq v \right\}<\infty
\end{equation}
for every $R>0$;
and
\begin{equation}\label{every tangent function lies above all tangent planes MainthmwithP}
t_z+\langle \xi_z, x-z\rangle \leq t_y +\langle \xi_y, x-y\rangle +\varphi_{y}(P(x)) 
\end{equation}
for every $z, y \in E^{*}$ and every $x\in \R^n$.
\item[$(iii)$] If $k=d$, then the preceding condition holds with $E$ in place of $E^{*}$ (no need to add new data).
\end{itemize}
Moreover, whenever these conditions are satisfied, for every number $a>0$ the formula
\begin{equation}\label{formula for F in main thm}
F=\textrm{conv}\left(x\mapsto \inf_{y\in E^{*}}\left\{t_y+ \langle\xi_y, x-y\rangle+\varphi_{y}(P(x))+a|P(x-y)|^2\right\}\right)
\end{equation}
defines a $C^{1,1}_{\textrm{loc}}$ convex extension of the jet $(f, G)$ to $\R^n$ which satisfies $X_F=X$.
\end{thm}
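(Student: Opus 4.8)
The plan is to transpose to the class $C^{1,1}_{\textrm{loc}}$ the reduction from the general case to the \emph{coercive} case that was carried out for $C^{1,1}$ in \cite{AzagraMudarra2}: the coercive case is exactly Theorem~\ref{Corollary to C11 Whitney thm for coercive convex functions}, and the bridge is the canonical decomposition of a convex function given by Theorem~\ref{rigid global behavior of convex functions}. Write $Q:=I-P$ for the orthogonal projection onto $X^{\perp}$, and in case $(iii)$ read $E^{*}:=E$, $t_y:=f(y)$, $\xi_y:=G(y)$. I would prove the two implications separately, reading off the explicit formula from the ``if'' part.

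For the ``only if'' direction, start from a convex $F\in C^{1,1}_{\textrm{loc}}(\R^n)$ with $F_{|_E}=f$, $\nabla F_{|_E}=G$ and $X_F=X$, and apply Theorem~\ref{rigid global behavior of convex functions} to write $F=c_F\circ P+\langle v_F,\cdot\rangle$ with $v_F\in X^{\perp}$ and $c_F:X\to\R$ convex, $C^{1,1}_{\textrm{loc}}$ and essentially coercive, where $X=X_F=\textrm{span}\{\nabla F(x)-\nabla F(y):x,y\in\R^n\}$. Condition $(i)$ is then immediate (differences of $G$ are differences of $\nabla F$), and $Q\circ G\equiv v_F$ on $E$. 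If $\ell:=\dim Y<d:=\dim X$ I would pick $p_1,\dots,p_{d-\ell}$ one at a time so as to enlarge $\textrm{span}\{\nabla F(y)-\nabla F(z):y,z\in E\cup\{p_1,\dots\}\}$ up to $X$; each such $p_j$ is automatically forced to lie \emph{outside} $E$, since a point of $E$ contributes no difference of gradients beyond $Y$. Setting $\beta_j:=F(p_j)$, $w_j:=\nabla F(p_j)$ gives \eqref{completing the span of derivatives}, and for $y\in E^{*}$ I would define the Bregman-type remainder $\varphi_y(u):=c_F(u)-c_F(P(y))-\langle\nabla c_F(P(y)),u-P(y)\rangle$ on $X$. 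Convexity of $c_F$ gives $\varphi_y\ge 0$; clearly $\varphi_y(P(y))=0$ and $\nabla\varphi_y(P(y))=0$; since $\nabla\varphi_y(u)-\nabla\varphi_y(v)=\nabla c_F(u)-\nabla c_F(v)$ is independent of $y$ one obtains \eqref{finite sup of Lip constants of phiy on balls main thm}; and the identity $F(x)-F(y)-\langle\nabla F(y),x-y\rangle=\varphi_y(P(x))$ (a one-line computation from $F=c_F\circ P+\langle v_F,\cdot\rangle$), combined with convexity of $F$, yields \eqref{every tangent function lies above all tangent planes MainthmwithP}. Case $\ell=d$ is the same with $E^{*}=E$.

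For the ``if'' direction, assume $(i)$ and $(ii)$. From $(i)$, $v:=Q(G(y))$ is independent of $y\in E$; the first observation is that \eqref{every tangent function lies above all tangent planes MainthmwithP} \emph{forces} $Q(w_j)=v$ for all $j$: take $z=p_j$ and $y\in E$ fixed in \eqref{every tangent function lies above all tangent planes MainthmwithP}, let $x=P(x_0)+tw$ with $w\in X^{\perp}$, divide by $t$ and let $t\to+\infty$ (the $\varphi_y(P(x))$ term staying bounded) to get $\langle Q(w_j),w\rangle\le\langle v,w\rangle$ for all $w\in X^{\perp}$, hence equality. Thus $Q(\xi_y)=v$ for every $y\in E^{*}$. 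Next I pass to $X$: set $\widehat t_y:=t_y-\langle v,Q(y)\rangle$, $\widehat\xi_y:=P(\xi_y)$, $\widetilde E^{*}:=P(E^{*})$. One checks that $\widehat t_y$ and $\widehat\xi_y$ depend only on $P(y)$, so they define a jet $(\widetilde f,\widetilde G)$ on $\widetilde E^{*}$; plugging $x\in X$ into \eqref{every tangent function lies above all tangent planes MainthmwithP} and using $\langle\xi_y,x-y\rangle=\langle\widehat\xi_y,x-P(y)\rangle+\langle v,Q(x-y)\rangle$ shows that $(\widetilde f,\widetilde G)$ together with the $\varphi_y$ satisfies on $X$ all the hypotheses of Theorem~\ref{Corollary to C11 Whitney thm for coercive convex functions}, the span hypothesis \eqref{essentially coercive data corollary} following from \eqref{completing the span of derivatives} because $\textrm{span}\{\widehat\xi_y-\widehat\xi_z\}=P(\textrm{span}\{\xi_y-\xi_z\})=P(X)=X$. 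So for every $a>0$ the function $\widetilde F:=\textrm{conv}_X\big(u\mapsto\inf_{y\in E^{*}}\{\widehat t_y+\langle\widehat\xi_y,u-P(y)\rangle+\varphi_y(u)+a|u-P(y)|^2\}\big)$ is an essentially coercive convex function of class $C^{1,1}_{\textrm{loc}}(X)$ extending $(\widetilde f,\widetilde G)$. Finally I transfer this to $\R^n$: writing $h$ for the inner function of \eqref{formula for F in main thm} and using $Q(\xi_y)=v$, $v\in X^{\perp}$, a computation gives $h(x)=\langle v,x\rangle+g(P(x))$ with $g$ the inner function of $\widetilde F$; since $\textrm{conv}(\cdot)$ commutes with composition by the linear surjection $P$ and with adding a linear functional (both immediate from \eqref{convex envelope as the inf...}), we get $F:=\textrm{conv}(h)=\langle v,\cdot\rangle+\widetilde F\circ P$, which is convex and of class $C^{1,1}_{\textrm{loc}}(\R^n)$ (composition of a $C^{1,1}_{\textrm{loc}}$ function with a linear map, plus a linear functional). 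A short computation using $Q\circ G\equiv v$ gives $F_{|_E}=f$ and $\nabla F_{|_E}=G$, and $X_F=\textrm{span}\{\nabla\widetilde F(u)-\nabla\widetilde F(v):u,v\in X\}=X_{\widetilde F}=X$ since $X_{\widetilde F}$ contains $\textrm{span}\{\widetilde G(u)-\widetilde G(v):u,v\in\widetilde E^{*}\}=X$.

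The step I expect to be the main obstacle is showing that the reduced jet is well defined, namely that $P(y)=P(y')$ forces $P(\xi_y)=P(\xi_{y'})$ (the equalities $Q(\xi_y)=Q(\xi_{y'})$ and $\widehat t_y=\widehat t_{y'}$ coming from the same $X^{\perp}$-coercivity trick). The argument first extracts from \eqref{every tangent function lies above all tangent planes MainthmwithP}, applied with $z=y$, ``$y$''$=y'$ and $x\in X$, the affine lower bounds $\varphi_{y'}(x)\ge\langle\delta,x-P(y')\rangle$ and $\varphi_{y}(x)\ge-\langle\delta,x-P(y')\rangle$ for $\delta:=P(\xi_y)-P(\xi_{y'})$ (the constants matching because $\varphi_{y'}(P(y'))=\varphi_y(P(y'))=0$), and then uses $\varphi_{y'}(P(y'))=0=\nabla\varphi_{y'}(P(y'))$ to bound $\varphi_{y'}(x)\le\tfrac{L}{2}|x-P(y')|^2$ near $P(y')$, which is incompatible with $\langle\delta,x-P(y')\rangle\le\varphi_{y'}(x)$ for small $x-P(y')$ along $\delta$ unless $\delta=0$. (If $P$ is not injective on $E^{*}$ one may retain several admissible $\varphi_y$ over a single point of $\widetilde E^{*}$; one then simply applies Theorem~\ref{Corollary to C11 Whitney thm for coercive convex functions} with the index set $E^{*}$ itself, a harmless reformulation since its proof never uses injectivity of the indexing.) Apart from this, the remaining work is bookkeeping --- keeping track of the linear part $v$, of the identity $h=\langle v,\cdot\rangle+g\circ P$, and of $X_F=X$ --- because the only genuinely hard analytic content, that the convex envelope on $X$ is of class $C^{1,1}_{\textrm{loc}}$, is already packaged in Theorem~\ref{Corollary to C11 Whitney thm for coercive convex functions}.
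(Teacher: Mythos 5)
Your proposal is correct and follows essentially the same route as the paper: the necessity part uses the canonical decomposition $F = c_F\circ P + \langle v_F, \cdot\rangle$ from Theorem \ref{rigid global behavior of convex functions} and the Bregman-remainder construction of $\varphi_y$, while the sufficiency part reduces to the essentially-coercive Theorem \ref{Corollary to C11 Whitney thm for coercive convex functions} on $X$ and transfers the extension back via $F = \langle v, \cdot\rangle + \widetilde{F}\circ P$, which is exactly the paper's $F=F^{\flat}\circ P+\langle v,\cdot\rangle$. The points where your verifications diverge are worth recording. Where you establish $Q(\xi_y)=v$ by a direct limit along lines in $X^{\perp}$, the paper instead reads it off the uniqueness clause of Theorem \ref{rigid global behavior of convex functions} applied to $m^{*}(x)=\sup_{y\in E^{*}}\{t_y+\langle\xi_y,x-y\rangle\}$, after first proving $X_{m^{*}}=X$. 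Where you prove that $P(y)=P(y')$ forces $\xi_y=\xi_{y'}$ by colliding an affine lower bound on $\varphi_{y'}$ with the local quadratic upper bound coming from \eqref{phiy equals 0 at y corollary}--\eqref{finite sup of Lip constants of phiy on balls main thm}, the paper argues instead that $\xi_y\neq\xi_{y'}$ would make $m^{*}$ essentially coercive in the direction $\textrm{span}\{y-y'\}\subset X^{\perp}$, contradicting $X_{m^{*}}=X$. And where you quote Theorem \ref{Corollary to C11 Whitney thm for coercive convex functions} ``with index set $E^{*}$'' as a harmless reformulation, the paper is more careful: it defines $g^{\flat}(x)=\inf\{f^{\flat}(z)+\langle G^{\flat}(z),x-z\rangle+\varphi_y(x)+a|x-z|^2 : z\in E^{\flat},\ y\in P^{-1}(z)\}$ with a double-indexed infimum and re-proves the two key lemmas (restriction of the infimum to balls, and the second-difference estimate) for this $g^{\flat}$, rather than citing the theorem as stated. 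All three variants are valid; yours are more elementary and self-contained at the cost of extra computation, the paper's lean harder on the structure theorem, and the third spot is the only place where your phrasing somewhat understates the amount of re-checking actually involved (though the content of the re-check is routine and does go through).
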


There are  analogues of all of the above results for the classes $C^{1, \alpha}_{\textrm{loc}}$ or $C^{1, \omega}_{\textrm{loc}}$, where $\omega$ is a concave, strictly increasing modulus of continuity with $\omega(\infty)=\infty$. It suffices to replace $|x|^2$ with $\theta(|x|)$, where $\theta(t):=\int_{0}^{t}\omega(s)ds$, and make some other obvious changes. For instance, we have the following version of Theorem \ref{MainTheorem with P} for the class $C^{1, \omega}_{\textrm{loc}}$.

\begin{thm}\label{MainTheorem with P for C1omegaloc}
Given an arbitrary nonempty subset $E$ of $\R^n$, a linear subspace $X\subset\R^n$, the orthogonal projection $P:=P_{X}:\R^n\to X$, and two functions $f:E \to \R, \: G: E \to \R^n$, the following is true. There exists a convex function $F:\R^n\to\R$ of class $C^{1, \omega}_{\textrm{loc}}$ such that $F_{|_E}=f$, $(\nabla F)_{|_E} =G$, and $X_{F}=X$, if and only if the following conditions are satisfied.
\begin{itemize}
\item[$(i)$] $Y:=\textrm{span}\left(\{G(y)-G(z) : y, z\in E\}\right)\subseteq X$.
\item[$(ii)$] If $k:=\dim Y< d:= \dim X$, then there exist points $p_1, \ldots, p_{d-k} \in \R^n \setminus E$, numbers $\beta_1,\ldots, \beta_{d-k} \in \R$,  and vectors $w_1, \ldots, w_{d-k} \in \R^n$ such that for every $y\in E^{*}:=E\cup\{p_1, ..., p_{d-k}\}$ there exists a (not necessarily convex) function $\varphi_{y}:X\to [0, \infty)$ of class $C^{1, \omega}_{\textrm{loc}}$ such that,
denoting: $t_y:=f(y)$ and $\xi_y:=G(y)$ for $y\in E$; $t_y=\beta_j$ and $\xi_y=w_i$ for $y=p_i$, $i=1, ..., d-k$,
we have that:
\begin{equation}\label{completing the span of derivatives C1womegaloc}
\textrm{span}\{\xi_y-\xi_z : y,z \in E^{*}\}=X;
\end{equation}
\begin{equation}\label{phiy equals 0 at y corollary C1womegaloc}
\varphi_{y}(P(y))=0, \nabla\varphi_{y}(P(y))=0;
\end{equation}
\begin{equation}\label{finite sup of Lip constants of phiy on balls corollary C1womegaloc}
\sup\left\{\frac{|\nabla\varphi_{y}(u)-\nabla\varphi_{y}(v)|}{\omega(|u-v|)} \, : \, y\in E^{*}\cap P^{-1}(B_{X}(0, R)), u, v\in B_{X}(0, R), u\neq v \right\}<\infty
\end{equation}
for every $R>0$; and
\begin{equation}\label{every tangent function lies above all tangent planes corollary C1womegaloc}
t_z+\langle \xi_z, x-z\rangle \leq t_y +\langle \xi_y, x-y\rangle +\varphi_{y}(P(x)) 
\end{equation}
for every $z, y \in E^{*}$ and every $x\in \R^n$.
\item[$(iii)$] If $k=d$, then the preceding condition holds with $E$ in place of $E^{*}$ (no need to add new data).
\end{itemize}
Moreover, whenever these conditions are satisfied, for every number $a>0$ the formula
$$
F=\textrm{conv}\left(x\mapsto \inf_{y\in E^{*}}\left\{t_y+ \langle\xi_y, x-y\rangle+\varphi_{y}(P(x))+a\,\theta\left(|P(x-y)|\right)\right\}\right)
$$
defines a $C^{1, \omega}_{\textrm{loc}}$ convex extension of the jet $(f, G)$ to $\R^n$ which satisfies $X_F=X$.
\end{thm}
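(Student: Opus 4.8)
\emph{Strategy.} The proof runs parallel to that of Theorem~\ref{MainTheorem with P}, replacing the quadratic weight $|\cdot|^2$ everywhere by $\theta(|\cdot|)$, with $\theta(t)=\int_0^t\omega(s)\,ds$, and replacing each Lipschitz estimate by the corresponding $\omega$-modulus estimate. Two structural ingredients drive the argument: the canonical decomposition $F=c_F\circ P_{X_F}+\langle v_F,\cdot\rangle$ of Theorem~\ref{rigid global behavior of convex functions}, which trades the general problem for an essentially coercive one on $X$, and a $C^{1,\omega}_{\textrm{loc}}$ version of the coercive extension result Theorem~\ref{Corollary to C11 Whitney thm for coercive convex functions}, stated and sketched at the end.

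\emph{Necessity.} Let $F\in C^{1,\omega}_{\textrm{loc}}(\R^n)$ be convex with $F|_E=f$, $\nabla F|_E=G$, $X_F=X$, and write $F(x)=c_F(P(x))+\langle v_F,x\rangle$ with $c_F\in C^{1,\omega}_{\textrm{loc}}(X)$ essentially coercive, as in Theorem~\ref{rigid global behavior of convex functions}. Since $\partial F(y)=\{G(y)\}$ on $E$, the formula for $X_F$ in that theorem gives $Y=\textrm{span}\{G(y)-G(z):y,z\in E\}\subseteq X_F=X$, which is $(i)$. If $\ell=\dim Y<d=\dim X$ then $\overline{E}\neq\R^n$ (otherwise continuity of $\nabla F$ and closedness of $Y$ would force $Y=X_F$), and $X_F=X$ forces $Y+\textrm{span}\{\nabla F(a)-\nabla F(y_0):a\in\R^n\setminus\overline{E}\}=X$ for any fixed $y_0\in E$; hence one may select greedily points $p_1,\dots,p_{d-\ell}\in\R^n\setminus\overline{E}\subseteq\R^n\setminus E$ so that \eqref{completing the span of derivatives C1womegaloc} holds on $E^{*}$, with $\beta_j:=F(p_j)$ and $w_j:=\nabla F(p_j)$. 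For each $y\in E^{*}$ put
\[
\varphi_y(u):=c_F(u)-c_F(P(y))-\langle\nabla c_F(P(y)),u-P(y)\rangle,\qquad u\in X;
\]
convexity of $c_F$ gives $\varphi_y\ge 0$, \eqref{phiy equals 0 at y corollary C1womegaloc} is immediate, $\nabla\varphi_y(u)=\nabla c_F(u)-\nabla c_F(P(y))$ has the same local $\omega$-modulus as $\nabla c_F$, which yields \eqref{finite sup of Lip constants of phiy on balls corollary C1womegaloc}, and a short computation using $\xi_y=\nabla c_F(P(y))+v_F$ shows that the right-hand side of \eqref{every tangent function lies above all tangent planes corollary C1womegaloc} equals $F(x)$, so this inequality is just the subgradient inequality $t_z+\langle\xi_z,x-z\rangle\le F(x)$. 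When $\ell=d$ the same works with $E$ in place of $E^{*}$, giving $(iii)$.

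\emph{Sufficiency.} Assume the conditions hold. By \eqref{completing the span of derivatives C1womegaloc} the vector $v:=Q_X(\xi_y)$ does not depend on $y\in E^{*}$; set $u_y:=P(y)$, $\gamma_y:=\xi_y-v\in X$, $\tau_y:=t_y-\langle v,y\rangle$. Using \eqref{phiy equals 0 at y corollary C1womegaloc}, \eqref{every tangent function lies above all tangent planes corollary C1womegaloc} and the vanishing of $\varphi_y$ and $\nabla\varphi_y$ at $u_y$, one checks that the data (including the functions $\varphi_y$ regarded on $X$) descend to a well-defined $1$-jet $u_y\mapsto(\tau_y,\gamma_y)$ on $P(E^{*})\subset X$ that satisfies the hypotheses of the $C^{1,\omega}_{\textrm{loc}}$ analogue of Theorem~\ref{Corollary to C11 Whitney thm for coercive convex functions} in the Euclidean space $X$, \eqref{completing the span of derivatives C1womegaloc} being exactly the spanning hypothesis there. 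That analogue then provides, for every $a>0$, an essentially coercive convex $c\in C^{1,\omega}_{\textrm{loc}}(X)$ with $c(u_y)=\tau_y$, $\nabla c(u_y)=\gamma_y$, given by $c=\textrm{conv}_X\!\left(u\mapsto\inf_{y\in E^{*}}\{\tau_y+\langle\gamma_y,u-u_y\rangle+\varphi_y(u)+a\,\theta(|u-u_y|)\}\right)$. Set $F(x):=c(P(x))+\langle v,x\rangle$. Since adding a linear functional commutes with taking convex envelopes and $\textrm{conv}(h\circ P)=(\textrm{conv}_X h)\circ P$ for the orthogonal projection $P$, the function $F$ coincides with the formula displayed in the statement. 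For $y\in E$ we get $F(y)=c(u_y)+\langle v,y\rangle=\tau_y+\langle v,y\rangle=t_y=f(y)$ and $\nabla F(y)=\nabla c(u_y)+v=\gamma_y+v=\xi_y=G(y)$. Finally $X_F\subseteq X$ because $\nabla F(x)-\nabla F(x')=\nabla c(P(x))-\nabla c(P(x'))\in X$, while $X\subseteq X_F$ because $c$ is essentially coercive on $X$, hence $X=X_c$ by the last assertion of Theorem~\ref{rigid global behavior of convex functions}; therefore $X_F=X$.

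\emph{The coercive $C^{1,\omega}$ result, and the main difficulty.} The $C^{1,\omega}_{\textrm{loc}}$ analogue of Theorem~\ref{Corollary to C11 Whitney thm for coercive convex functions} is obtained by repeating its proof with $|\cdot|^2$ replaced by $\theta(|\cdot|)$ throughout. The routine ingredients are: the map $x\mapsto\theta(|x-y|)$ is globally of class $C^{1,\omega}$, with $\nabla_x\theta(|x-y|)=\omega(|x-y|)\frac{x-y}{|x-y|}$ and $\omega$-modulus controlled by the concavity and monotonicity of $\omega$; consequently each $x\mapsto t_y+\langle\xi_y,x-y\rangle+\varphi_y(x)+a\,\theta(|x-y|)$ is $C^{1,\omega}_{\textrm{loc}}$; and the place of Theorem~\ref{reformulation from AzagraLeGruyerMudarra main result} is taken by its $C^{1,\omega}$ counterpart, in which the epigraph of the extension is the closed convex hull of the tangent $\theta$-graphs $\{t=f(y)+\langle G(y),x-y\rangle+M\theta(|x-y|)\}$ and condition \eqref{every tangent parabola lies above all tangent planes} is replaced by $f(z)+\langle G(z),x-z\rangle\le f(y)+\langle G(y),x-y\rangle+M\theta(|x-y|)$. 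The genuine difficulty — the step I expect to be the main obstacle — is the localization: since the $\varphi_y$ and the interpolation data are controlled only ball by ball, with constants that grow, one must show that on each ball the convex envelope is realized by convex combinations of finitely many of the functions $t_y+\langle\xi_y,\cdot-y\rangle+\varphi_y+a\,\theta(|\cdot-y|)$ with $y$ ranging over a fixed larger ball, so that its $C^{1,\omega}$ regularity there reduces to the already established statement for a \emph{uniform} $C^{1,\omega}$ family, and then that these ball-by-ball estimates patch into a bona fide function of class $C^{1,\omega}_{\textrm{loc}}(\R^n)$ still interpolating the jet. This is precisely where the coercive term $a\,\theta(|P(x-y)|)$ and the locally (rather than globally) bounded moduli \eqref{finite sup of Lip constants of phiy on balls corollary C1womegaloc} come into play; the auxiliary point of choosing the $p_j$ outside $E$ in the necessity proof is handled exactly as in Theorem~\ref{MainTheorem with P}.
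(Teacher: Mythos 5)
Your proposal is correct and follows essentially the same route as the paper, which itself reduces this theorem to the proof of Theorem~\ref{MainTheorem with P} by systematically replacing $|\cdot|^2$ with $\theta(|\cdot|)$: you invoke the canonical decomposition of Theorem~\ref{rigid global behavior of convex functions} for necessity, reduce the sufficiency to a coercive problem on $X$ via $P$, and then apply the $C^{1,\omega}$ analogue of Theorem~\ref{Corollary to C11 Whitney thm for coercive convex functions} together with the Kirchheim--Kristensen-type argument for the convex envelope. The only deviation is cosmetic: you verify directly from the jet conditions that the reduced data $(\tau_y,\gamma_y)$ descend to $P(E^*)$ (equivalently $\xi_y=\xi_{y'}$ when $P(y)=P(y')$), whereas the paper extracts this via the auxiliary lemma establishing $X_{m^*}=X$ and then applying Theorem~\ref{rigid global behavior of convex functions}; both routes work.
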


Finally, let us mention that our methods also allow us to establish explicit formulas for $C^1$ convex extensions of jets. We only state the result for the easier case that $\textrm{span}\{G(y)-G(z) : y, z\in E\}=\R^n$, because the most general result of this kind for the class $C^1$ has an excessively complicated statement.\footnote{Even if we assume $E$ to be closed, in some situations we would have to find and add new jets not only at a finite number of points $p_j$, but also at every point $z$ of the possibly infinite set $\overline{P(E)}\setminus E$. Although the latter jets $\xi_z$, $z\in \overline{P(E)}\setminus E$ are uniquely determined, the associated functions $\varphi_z$ are not, and in any case the process to define them is laborious.}

\begin{thm}\label{MainTheorem with P=I for C1}
Let $E$ be a closed nonempty subset of $\R^n$. Let $f:E\to\R$, $G:E\to\R^n$ be continuous functions  such that 
\begin{equation}\label{essentially coercive data thm C1}
\textrm{span}\{G(x)-G(y) : x, y\in E\}=\R^n.
\end{equation}
Then there exists a convex function $F\in C^{1}(\R^n)$ such that $F_{|_E}=f$ and $(\nabla F)_{|_E}=G$ if and only if for every $y\in E$ there exists a (not necessarily convex) differentiable function $\varphi_y:\R^n\to [0, \infty)$ such that:
\begin{equation}\label{phiy equals 0 at y thm C1}
\varphi_{y}(y)=0,  \,\,\, \nabla\varphi_{y}(y)=0,
\end{equation}
and
\begin{equation}\label{every tangent function lies above all tangent planes corollary C1}
f(z)+\langle G(z), x-z\rangle\leq f(y)+\langle G(y), x-y\rangle +\varphi_{y}(x) 
\end{equation}
for every $y, z\in E$ and every $x\in \R^n$.
Moreover, when these conditions are satisfied, for every number $a>0$ the formula
$$
F=F_a=\textrm{conv}\left(x\mapsto \inf_{y\in E}\left\{f(y)+\langle G(y), x-y\rangle+\varphi_{y}(x)+a|x-y|^2\right\}\right)
$$
defines such a $C^{1}$ convex extension of the jet $(f, G)$ to $\R^n$.
\end{thm}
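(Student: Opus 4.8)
The plan is to prove the two implications separately, with the ``only if'' part being routine and the ``if'' part resting on the theory of $C^1$ convex envelopes.

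For the necessity, assume $F\in C^1(\R^n)$ is convex with $F_{|_E}=f$ and $(\nabla F)_{|_E}=G$. For each $y\in E$ set $\varphi_y(x):=F(x)-f(y)-\langle G(y),x-y\rangle=F(x)-F(y)-\langle \nabla F(y),x-y\rangle$. Convexity together with $\nabla F(y)=G(y)$ gives $\varphi_y\ge 0$ on $\R^n$; plainly $\varphi_y(y)=0$, $\nabla\varphi_y(y)=0$, and $\varphi_y$ is differentiable (indeed $C^1$); and for $z\in E$, convexity gives $f(z)+\langle G(z),x-z\rangle=F(z)+\langle\nabla F(z),x-z\rangle\le F(x)=f(y)+\langle G(y),x-y\rangle+\varphi_y(x)$, which is exactly \eqref{every tangent function lies above all tangent planes corollary C1}. (These $\varphi_y$ are in fact convex, so the necessity parts of the other theorems above follow in the same way.)

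For the sufficiency, fix $a>0$ and write $h_y(x):=f(y)+\langle G(y),x-y\rangle+\varphi_y(x)+a|x-y|^2$, $g:=\inf_{y\in E}h_y$ and $F:=\textrm{conv}(g)$. The first task is to show that $F$ is a finite, continuous, essentially coercive convex function. Taking the supremum over $z\in E$ in \eqref{every tangent function lies above all tangent planes corollary C1} yields $\ell(x):=\sup_{z\in E}\{f(z)+\langle G(z),x-z\rangle\}\le h_y(x)$ for every $y\in E$, hence $\ell\le g$; since also $\ell\le h_{z_0}<\infty$ for any fixed $z_0\in E$, the function $\ell$ is finite and convex, and by the span hypothesis \eqref{essentially coercive data thm C1} together with Theorem \ref{rigid global behavior of convex functions} it is essentially coercive; as $\ell\le F\le g\le h_{z_0}$, the same holds for $F$, which is therefore finite and continuous. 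Next, for $y_0\in E$ the chain of inequalities $f(y_0)\le\ell(y_0)\le F(y_0)\le g(y_0)\le h_{y_0}(y_0)=f(y_0)$ gives $F(y_0)=f(y_0)$ and shows the affine map $x\mapsto f(y_0)+\langle G(y_0),x-y_0\rangle$ supports $F$ at $y_0$, i.e.\ $G(y_0)\in\partial F(y_0)$; combining this with $F(x)\le h_{y_0}(x)=f(y_0)+\langle G(y_0),x-y_0\rangle+\varphi_{y_0}(x)+a|x-y_0|^2$ and $\varphi_{y_0}(x)+a|x-y_0|^2=o(|x-y_0|)$ (from \eqref{phiy equals 0 at y thm C1}) gives $F(x)=f(y_0)+\langle G(y_0),x-y_0\rangle+o(|x-y_0|)$, so $F$ is differentiable at $y_0$ with $\nabla F(y_0)=G(y_0)$.

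It remains to prove that $F\in C^1(\R^n)$, which is the heart of the matter. Since a finite convex function on $\R^n$ that is differentiable at every point is automatically $C^1$, it suffices to show $F$ is differentiable everywhere; this was done on $E$, and off $E$ I would argue exactly as in the $C^1$ convex extension theorem of \cite{AzagraMudarra1} (the $C^1$ analogue of Theorem \ref{Corollary to C11 Whitney thm for coercive convex functions} and of the $C^{1,1}$ result of \cite{AzagraLeGruyerMudarra}). Given $x_0$ and $\xi\in\partial F(x_0)$, one uses the representation \eqref{convex envelope as the inf...} to write $x_0=\sum_j\lambda_j x_j$ with $\sum_j\lambda_j g(x_j)$ as close to $F(x_0)$ as desired; here closedness of $E$ together with the coercivity in the variable $y$ supplied by the term $a|x-y|^2$ confines the relevant points to compact sets, so the infima defining $g$ and the envelope are (essentially) attained. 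Then each such $x_j$ is a contact point, $F(x_j)=g(x_j)=h_{y_j}(x_j)$ for a suitable $y_j\in E$, and since $F\le g\le h_{y_j}$ with equality at $x_j$ while $h_{y_j}$ is differentiable there, $\nabla h_{y_j}(x_j)$ is the unique element of $\partial F(x_j)$; feeding this back into the subgradient inequalities for $\xi$ at $x_0$ and using $\sum_j\lambda_j(x_j-x_0)=0$ one concludes that $\partial F(x_0)$ reduces to the single point $\{\xi\}$. This last step — controlling the convex-combination representation of $\textrm{conv}(g)$ and the (possibly non-attained) infima, and squeezing $F$ tightly enough between its supporting hyperplane at $x_0$ and the differentiable barriers $h_{y_j}$ to force uniqueness of the subgradient — is where essentially all the work lies, and it is precisely for this squeezing that the extra quadratic term $a|x-y|^2$ is needed: it yields coercivity in $y$ and turns the $\varphi_y$, about which we know only the pointwise second-order vanishing at their base points and no uniform modulus, into usable upper barriers. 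Once differentiability everywhere is established, $F$ is the desired essentially coercive $C^1$ convex extension of $(f,G)$, given by the stated formula.
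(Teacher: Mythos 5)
Your proof is correct in outline and follows the same overall route as the paper's: define $g(x)=\inf_{y\in E}h_y(x)$ with $h_y(x)=f(y)+\langle G(y),x-y\rangle+\varphi_y(x)+a|x-y|^2$, take $F=\mathrm{conv}(g)$, squeeze $m\le F\le g$ to get $F=f$ and $\nabla F=G$ on $E$, and then the whole game is the differentiability of $F$. The necessity part and the identification of $\nabla F$ on $E$ via $\varphi_{y_0}(x)+a|x-y_0|^2=o(|x-y_0|)$ are exactly right, and your direct pinch at points of $E$ is a slight streamlining of the paper (which instead deduces $\nabla F=G$ on $E$ from the already-known differentiability of $F$ and $G(y)\in\partial m(y)$).

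Where you diverge is precisely the step you flag as "the heart of the matter." The paper does \emph{not} rerun the convex-combination/contact-point machinery of Lemma~\ref{the convex envelope is smooth}. Instead, it isolates two short lemmas: (a) closedness of $E$ and the $a|x-y|^2$ term let the infimum defining $g$ restrict to $E\cap B(0,\eta_x)$ and be attained (Lemma~\ref{gflat restricts in C1 proof}); and (b) at the minimizer $y_x$, the vector $\xi_x=G(y_x)+\nabla\widetilde\varphi_{y_x}(x)$ is a supergradient of $g$, i.e.\ $\limsup_{h\to0}\bigl(g(x+h)-g(x)-\langle\xi_x,h\rangle\bigr)/|h|\le 0$ everywhere. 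It then cites the remark of Kirchheim--Kristensen that coercivity of $g$ plus pointwise superdifferentiability of $g$ already forces $\mathrm{conv}(g)$ to be differentiable (hence $C^1$, being convex and finite). What you sketch — representing $x_0$ as $\sum_j\lambda_j x_j$ with $F(x_j)=g(x_j)=h_{y_j}(x_j)$, using the differentiable barrier $h_{y_j}$ to pin down $\partial F(x_j)$, and then feeding this back to kill all but one element of $\partial F(x_0)$ — is in essence the \emph{proof} of the Kirchheim--Kristensen criterion, inlined into this problem. That is a valid alternative, but your "(essentially) attained" glosses over exactly the points the paper handles cleanly: one must justify that the Carath\'eodory representation of $\mathrm{conv}(g)$ at $x_0$ can be taken with an honest minimizer rather than a minimizing sequence (here coercivity of $g$ and lower semicontinuity are what is needed, and passing to subsequential limits of the $(\lambda_j^{(k)},x_j^{(k)})$ requires care, especially when some $\lambda_j\to0$), and one must confirm $F(x_j)=g(x_j)$ when $\lambda_j>0$ (this follows from $\sum_j\lambda_j g(x_j)=F(x_0)\le\sum_j\lambda_j F(x_j)\le\sum_j\lambda_j g(x_j)$). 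So the two approaches buy different things: citing Kirchheim--Kristensen after establishing superdifferentiability of $g$ keeps the argument short and local (one supergradient per point), whereas your direct contact-point argument is self-contained but requires reproving the global compactness/attainment step. If you take your route, you should make Lemma~\ref{gflat restricts in C1 proof}'s attainment statement (using closedness of $E$ and the quadratic penalty) and the extraction of a genuine minimizer in \eqref{convex envelope as the inf...} explicit rather than parenthetical.
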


\medskip

\section{Some remarks on the local Lipschitz seminorms of the extensions}

Recall that $C^{1,1}(\R^n)$ denotes the set of all functions $\varphi:\R^n\to\R$ which are differentiable and such that $\nabla\varphi:\R^n\to\R^n$ is Lipschitz. This space is naturally equipped with the seminorm
$$
\rho_{1,1}(\varphi)=\sup_{x, y\in\R^n, x\neq y}\frac{|\nabla\varphi(x)-\nabla \varphi(y)|}{|x-y|}=\textrm{Lip}(\nabla \varphi),
$$
and if we distinguish and fix a point $x_0\in\R^n$ and define
$$
\|\varphi\|_{C^{1,1}}(\R^n)=|\varphi(x_0)|+ |\nabla\varphi(x_0)| + \rho_{1,1}(\varphi),
$$
then $\left(C^{1,1}(\R^n), \|\cdot\|_{C^{1,1}(\R^n)}\right)$ is a Banach space. Now, if $E$ is a nonempty subset of $\R^n$ and $(f, G):E\to\R\times\R^n$ is a $1$-jet, we can define the Whitney seminorm of $(f, G)$ by
\begin{multline*}
\rho^{W}_{E}(f,G):= \inf \lbrace M>0 \: : \: 
|f(x)-f(y)-\langle G(y), x-y\rangle|\leq \tfrac{1}{2} M|x-y|^2 \textrm{ and }
\\
 |G(x)-G(y)|\leq M|x-y| \textrm{ for all } x, y\in E \rbrace.
\end{multline*}
If we consider the sets
$$
\mathcal{J}^{1,1}(E)=\left\{(f, G):E\to\R\times\R^n \,\, | \, \, \exists\, H\in C^{1,1}_{\textrm{loc}}(\R^n) \textrm{ s.t. } (H, \nabla H)=(f, G) \textrm{ on } E\right\},
$$
and
$$
\mathcal{J}^{W(1,1)}(E)=\left\{(f, G):E\to\R\times\R^n \,\, | \,\, \rho^{W}_{E}(f,G)<\infty\right\},
$$
then Whitney's extension theorem tells us that
$$
\mathcal{J}^{1,1}(E)= \mathcal{J}^{W(1,1)}(E)
$$
and provides us with a linear extension operator
$$
\mathcal{J}^{W(1,1)}(E)\ni (f, G)\mapsto \mathcal{W}(f,G)\in 
C^{1,1}(\R^n)
$$
with the property that 
\begin{equation}\label{C(n) in Whitney}
\rho_{1,1}\left(\mathcal{W}(f,G)\right)\leq C(n) \rho^{W}_{E}(f,G),
\end{equation}
where $C(n)$ is a constant only depending on the dimension $n$.

For the cone of convex functions of class $C^{1,1}$ we can consider the functional
\begin{multline*}
\rho^{CW}_{E}(f,G):= \inf \left\{ M>0 \, : \right.\\ \hspace{0.5cm} 
f(z)+\langle G(z), x-z\rangle\leq f(y)+\langle G(y), x-y\rangle +\frac{M}{2}|x-y|^2 \textrm{ for all } y, z\in E, \, \,\left.  x\in\R^n \right\},
\end{multline*}
and define the sets
$$
\mathcal{J}^{1,1}_{\textrm{conv}}(E)= \left\{(f, G):E\to\R\times\R^n \,\, | \, \, \exists\, H\in C^{1,1}_{\textrm{conv}}(\R^n) \textrm{ such that } (H, \nabla H)=(f, G) \textrm{ on } E\right\},
$$
and
$$
\mathcal{J}^{CW(1,1)}(E)=\left\{(f, G):E\to\R\times\R^n \,\, | \,\, \rho^{CW}_{E}(f,G)<\infty\right\}.
$$
The main results of \cite{AzagraMudarra1, AzagraLeGruyerMudarra} tell us that
$$
\mathcal{J}^{1,1}_{\textrm{conv}}(E)= \mathcal{J}^{CW(1,1)}(E)
$$
and show that the operator $(f, G)\mapsto F$ given by formula \eqref{extension formula case C11 conv} has the property that
$$
\rho_{1,1}\left(F\right)\leq A \, \rho^{CW}_{E}(f,G),
$$
where $A$ is an absolute constant (in fact we can take $A=1$). We also saw in \cite{AzagraLeGruyerMudarra} that a similar operator $\mathcal{E}$ for the problem of extending $1$-jets by (not necessarily convex) functions of class $C^{1,1}(\R^n)$ also has the property that
$$
\rho_{1,1}\left(\mathcal{E}(f,G)\right)\leq A\, \rho^{W}_{E}(f,G),
$$
where $A$ is an absolute constant (here one can take $A=7$).
In this respect this operator $\mathcal{E}$ behaves even better than the classical Whitney extension operator, because one has $\lim_{n\to\infty}C(n)=\infty$ in \eqref{C(n) in Whitney}. On the other hand, Whitney's operator is linear, while the one provided by \cite{AzagraLeGruyerMudarra} is not.

In this section we will see how this scenery changes dramatically when we consider $C^{1,1 \, \textrm{loc}}_{\textrm{conv}}(\R^n)$, the cone of convex functions which are of class $C^{1,1}_{\textrm{loc}}(\R^n)$, instead of the much smaller cone $C^{1,1}_{\textrm{conv}}(\R^n)$.
But first we must specify a natural topology in the space $C^{1,1}_{\textrm{loc}}(\R^n)$. Fixing a point $x_0\in\R^n$, we consider, for each $k\in\N$, the seminorm $\rho_k :C^{1,1}_{\textrm{loc}}(\R^n)\to [0, \infty)$ defined by
$$
\rho_k(\varphi)=\sup_{x, y\in B(x_0,k), x\neq y}\frac{|\nabla\varphi(x)-\nabla \varphi(y)|}{|x-y|}=\textrm{Lip}\left(\nabla \varphi_{|_{B(x_0,k)}}\right),
$$
and for $k=0$ we set
$$
\rho_0(\varphi)=|\varphi(x_0)|+|\nabla\varphi(x_0)|.
$$
Then it is not difficult to check that $C^{1,1}_{\textrm{loc}}(\R^n)$, equipped with the family of seminorms $\{\rho_k\}_{k\in\N\cup\{0\}}$, is a Fr\'echet space. A natural metric in this space is given by 
$$
\rho(\varphi, \psi)=\max_{k}\frac{2^{-k}\rho_k(\varphi-\psi)}{1+\rho_k(\varphi-\psi)}.
$$
In particular, a sequence $\{\varphi_j\}_{j\in\N}$ converges to $\varphi$ in $C^{1,1}_{\textrm{loc}}(\R^n)$ if and only if $$\lim_{j\to\infty}\rho_k (\varphi_j-\varphi)=0$$ for every $k\geq 0$. And a set $\mathcal{A}\subset C^{1,1}_{\textrm{loc}}(\R^n)$ is bounded if and only if for every $k$ the seminorm $\rho_k$ is bounded on $\mathcal{A}$. Boundedness of a set $\mathcal{A}$ in this space is often very useful, as it allows us, through the use of Arzel\`a-Ascoli's theorem and a diagonal argument, to extract a sequence $(\varphi_k)$ from $\mathcal{A}$ which converges, uniformly on bounded sets, to some function $\varphi\in C^{1,1}_{\textrm{loc}}(\R^n)$ (and such that $(\nabla\varphi_k)$ converges, uniformly on bounded sets, to $\nabla\varphi$).

Now, for any subset $E$ of $\R^n$, let us denote
$$
\mathcal{J}^{1,1}_{\textrm{loc}}(E):= \\
\left\{(f, G):E\to\R\times\R^n \, | \, \exists\, H\in C^{1,1}_{\textrm{loc}}(\R^n) \textrm{ such that } (H, \nabla H)=(f, G) \textrm{ on } E\right\},
$$
and its subset
$$
\mathcal{J}^{1,1 \, \textrm{loc}}_{\textrm{conv}}(E):= \\
\left\{(f, G):E\to\R\times\R^n \, | \, \exists\, H\in C^{1,1 \, \textrm{loc}}_{\textrm{conv}}(\R^n) \textrm{ such that } (H, \nabla H)=(f, G) \textrm{ on } E\right\}.
$$
On the set of $1$-jets on $E$ we may consider, for each $k\in\N$, the Whitney seminorms
\begin{multline*}
\rho^{W}_{k, E}(f,G)=
\inf_{M>0} \lbrace |f(x)-f(y)-\langle G(y), x-y\rangle|\leq \tfrac{1}{2}M|x-y|^2, \\ |G(x)-G(y)|\leq M|x-y| \,\,\, \forall x, y\in E\cap B(x_0,k) \rbrace,
\end{multline*}
and for $k=0$
$$
\rho^{W}_{0, {\Bl E}}(f, G)=|f(x_0)|+|G(x_0)|,
$$
where $x_0\in E$ is some fixed distinguished point,
and the metric
$$
\rho_{E}^{W}(\varphi, \psi)=
\max_{k}\frac{2^{-k}\rho_{k, E}^{W}(\varphi-\psi)}{1+\rho_{k, E}^{W}(\varphi-\psi)}.
$$
Again, Whitney's extension technique gives us
$$
\mathcal{J}^{1,1}_{\textrm{loc}}(E)=\{(g, G):E\to\R\times\R^n \, | \, \rho_{k, E}(f,G)<\infty \textrm{ for every } k\in\N\}.
$$
It is also well known that Whitney's extension operator
$$
\mathcal{J}^{1,1}_{\textrm{loc}}(E)\ni (f, G)\mapsto \mathcal{W}(f,G)\in 
C^{1,1}_{\textrm{loc}}(\R^n)
$$
is linear and continuous with respect to the metrics that we have defined in these spaces. This is equivalent to saying that if $\{(f_j, G_j)\}_{j\in\N}$ is a sequence in $\mathcal{J}^{1,1}_{\textrm{loc}}(E)$ such that $\{\rho_{k, E}^{W}(f_j, G_j)\}_{j\in\N}$ is bounded for every $k\geq 0$ then $\{\rho_{k}(\mathcal{W}(f_j, G_j))\}_{j\in\N}$ is also bounded for every $k\geq 0$.

In the framework of the problem that we are considering in this paper, we may
consider the following functionals
\begin{multline*}
\rho^{CW}_{k, E}(f,G):= \inf \lbrace M>0 : 
f(z)+\langle G(z), x-z\rangle\leq f(y)+\langle G(y), x-y\rangle +\frac{M}{2}|x-y|^2 \\ \forall y, z\in E\cap B(x_0,k) \, \,  x\in\R^n \rbrace,
\end{multline*}
and
$$
\rho^{CW}_{0, E}(f,G)=|f(x_0)|+|G(x_0)|,
$$
where $x_0$ is a fixed distinguished point of $E$,
and also (more naturally in our setting, in view of Theorem \ref{MainTheorem with P}, and using the notation of this result) the functionals
$$
\mu_{k, E}(f,G):=
\inf\left\{  \sup\left\{\frac{|\nabla\varphi_{y}(u)-\nabla\varphi_{y}(v)|}{|u-v|} \right\}\right\},
$$
where the supremum is taken over all points $y\in E^{*}\cap P^{-1}(B_{X}(P(x_0), k)), u, v\in B_{X}(P(x_0), k), u\neq v$, and the infimum is taken over all families of functions $\{\varphi_y\}$ satisfying the conditions of Theorem \ref{MainTheorem with P}. We also set
$$
\mu_{0, E}(f,G)=|f(x_0)|+|G(x_0)|.
$$
It is then natural to ask: does there exist a (not necessarily linear) extension operator
$$
\mathcal{J}^{1,1 \, \textrm{loc}}_{\textrm{conv}}(E)\ni (f, G)\mapsto \mathcal{E}(f,G)\in 
C^{1,1 \, \textrm{loc}}_{\textrm{conv}}(\R^n)
$$
such that, if $\{(f_j, G_j)\}_{j\in\N}$ is a sequence in $\mathcal{J}^{1,1 \, \textrm{loc}}_{\textrm{conv}}(E)$ so that $\{\rho^{CW}_{k, E}(f_j, G_j)\}_{j\in\N}$ is bounded for every $k\in\N$, then $\{\rho_{k}(\mathcal{E}(f_j, G_j))\}_{j\in\N}$ is bounded for every $k\in\N$ too? And more importantly, does there exist a (not necessarily linear) extension operator
$$
\mathcal{J}^{1,1 \, \textrm{loc}}_{\textrm{conv}}(E)\ni (f, G)\mapsto \mathcal{E}(f,G)\in 
C^{1,1 \, \textrm{loc}}_{\textrm{conv}}(\R^n)
$$
such that, if $\{(f_j, G_j)\}_{j\in\N}$ is a sequence in $\mathcal{J}^{1,1 \, \textrm{loc}}_{\textrm{conv}}(E)$ so that $\{\mu_{k, E}(f_j, G_j)\}_{j\in\N}$ is bounded for every $k\in\N$, then $\{\rho_{k}(\mathcal{E}(f_j, G_j))\}_{j\in\N}$ is bounded for every $k\in\N$ too?

Next we answer these questions in the negative.

\begin{prop}\label{there cannot be any extension method with a good control of the seminorms}
There exist a closed subset $E$ of $\R^2$ and a sequence of $1$-jets $\{(f_j, G_j)\}_{j\in\N}$ on $E$ such that:
\begin{enumerate}
\item There exists a sequence $\{F_j\}_{j\in\N}\subset C^{1,1 \, \textrm{loc}}_{\textrm{conv}}(\R^n)$ such that $(F_j, \nabla F_j)_{|_{_E}}=(f_j, G_j)$ for all $j\in\N$.
\item For every $k\in\N\cup\{0\}$ we have that $\sup_{j\in\N}\rho^{W}_{k, E}(f_j, G_j)<\infty$, \, $\sup_{j\in\N}\rho^{CW}_{k, E}(f_j, G_j)<\infty$, \, and $\sup_{j\in\N}\mu_{k, E}(f_j, G_j)<\infty$.
\item For every sequence $\{H_j\}_{j\in\N}\subset C^{1,1, \, \textrm{loc}}_{\textrm{conv}}(\R^n)$ such that $(H_j, \nabla H_j)_{|_{_E}}=(f_j, G_j)$ for all $j$, we have that \, $\sup_{j\in\N}\rho_{k}(H_j)=\infty$ for some $k\geq 1$.
\end{enumerate}
\end{prop}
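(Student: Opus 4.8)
The plan is to build, on a suitable closed set $E\subset\R^2$, a sequence of jets that all admit $C^{1,1}_{\textrm{loc}}$ convex extensions with \emph{uniformly} controlled Whitney data (parts (1) and (2)), but for which any convex $C^{1,1}_{\textrm{loc}}$ extension is forced, by a global rigidity phenomenon of the kind illustrated in Examples \ref{example of jet with Holder wedge and no extension} and \ref{example of jet with Holder wedge and many extensions}, to develop an unavoidably steep H\"older wedge at infinity, so that its local Lipschitz seminorm $\rho_k$ blows up on some fixed ball (part (3)). The model to keep in mind is $g_j(x,y)=c_j|x|^{3/2}$ together with a domain $E_j$ of the form $\{|x|\ge e^{y}\}$ or similar, on which $g_j$ restricted to $E_j$ is very tame (the set stays away from the singular line $x=0$ once $y$ is bounded), yet the \emph{only} convex extension that is constant in $y$ off $E_j$ is $g_j$ itself, which on the fixed ball $B(0,1)$ has Lipschitz constant of the gradient of order $c_j\to\infty$. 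To make a single fixed set $E$ work for all $j$, I would not vary the domain but instead fix one set $E$ and vary the jets: take $E$ to be a countable union of horizontal-strip-like pieces $E^{(j)}$ placed at heights $y\approx j$ (or along disjoint vertical bands), with $E^{(j)}$ designed so that the jet $(f_j,G_j)$ is supported on $E^{(j)}$ and looks, near $E^{(j)}$, like a scaled copy of the Example \ref{example of jet with Holder wedge and no extension} construction, translated and scaled so that the forced H\"older wedge for the $j$-th problem passes through $B(0,1)$ with gradient-Lipschitz constant growing in $j$.

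The key steps, in order: (i) Fix $E$ and, for each $j$, define $(f_j,G_j)$ on $E$ explicitly as the restriction of a convex $C^{1,1}_{\textrm{loc}}$ function $F_j$ that is a smoothed, scaled $|x|^{3/2}$-type wedge "rotated" so its ridge direction is the $j$-th coordinate arrangement; this gives (1) by construction, with $F_j$ the obvious witness. (ii) Verify (2): because $F_j$ restricted to $E$ only sees points bounded away from the ridge inside any fixed ball $B(x_0,k)$, the quantities $\rho^W_{k,E}(f_j,G_j)$ and $\rho^{CW}_{k,E}(f_j,G_j)$ are computed from smooth, uniformly-bounded data; and for $\mu_{k,E}$ one exhibits explicit auxiliary functions $\varphi_y$ (the natural quadratic-plus-correction majorants from the construction in Section 5.3) whose gradient-Lipschitz constants on $B(x_0,k)$ are bounded independently of $j$. (iii) Prove the rigidity lemma: \emph{any} convex $H$ on $\R^2$ with $H=f_j,\ \nabla H=G_j$ on $E$ must, by the same subgradient/supporting-line argument used in Example \ref{example of jet with Holder wedge and no extension} (comparing $H$ along the vertical lines that stay in $E$ and using $\partial H/\partial y=0$ forced by boundedness of $H$ along those lines), coincide with the scaled wedge $g_j$ on a neighborhood of the fixed ball $B(0,1)$, whence $\rho_k(H)\ge \rho_1(H)\ge \rho_1(g_j)\to\infty$, giving (3).

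I expect step (iii), the rigidity lemma in the form needed for a \emph{single fixed $E$}, to be the main obstacle. In Examples \ref{example of jet with Holder wedge and no extension}–\ref{example of jet with Holder wedge and many extensions} the rigidity came from the vertical lines $\{(c,t):t\in\R\}$ lying entirely in $E$; to reuse that argument for infinitely many $j$ simultaneously I must arrange $E$ so that for each $j$ there is a whole (unbounded) line or ray contained in $E$ along which $f_j$ is bounded and whose projection forces $\partial H/\partial y=0$ \emph{locally near $B(0,1)$} — and I must do this without the pieces for different $j$ interfering (e.g. without one $E^{(j)}$ accidentally enlarging the constraints of another, which could destroy solvability of the $j'$-th problem or spoil the uniform bounds in (2)). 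The cleanest way around this is probably to separate the problems along the $x$-axis rather than stack them: let $E=\bigcup_j E^{(j)}$ with $E^{(j)}$ sitting in the vertical band $x\in[2^{j},2^{j}+1]$, each $E^{(j)}$ a translated-and-scaled copy of the Example \ref{example of jet with Holder wedge and no extension} set with wedge amplitude $\to\infty$, and the jets $(f_j,G_j)$ taken to vanish (flat, zero jet) outside the relevant band; then the bands are far apart, the uniform estimates in (2) decouple band-by-band, and the rigidity argument applies inside each band to force $H_j$ to equal the $j$-th steep wedge there — but now the steep behavior occurs on the ball $B(c_j,1)$ for a fixed-radius ball whose \emph{center} moves, so to conclude $\sup_j\rho_k(H_j)=\infty$ for a \emph{fixed} $k$ one must instead let the wedge amplitudes for the $j$-th problem be placed so the blow-up is seen already on $B(0,k_0)$ for one $k_0$; reconciling "bands must be far apart" with "blow-up must occur on a single fixed ball" is exactly the tension to resolve, and I would resolve it by keeping all bands within $B(0,k_0)$ for a single $k_0$ (finitely many would not suffice — so instead take bands accumulating at a point inside $B(0,1)$, shrinking in width, which is compatible with $E$ closed) and only letting the amplitudes grow.
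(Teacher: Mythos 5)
Your plan differs substantially from the paper's proof, and as written it has gaps you yourself flag but do not close, and which I believe are genuine obstructions rather than routine details.

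The paper's construction uses a single explicit set $E=E_1\cup E_2$ with $E_1=\{|x|\ge e^y\}$ and $E_2=\{|x|=1,\,y\in\N\}$, and jets $(f_j,G_j)$ that all agree with $(|x|,\nabla|x|)$ on $E_1$ but, on the isolated column $E_2$, switch from the value $1$ to a steeply increasing linear jet only for $y>j+1$. For each fixed $j$ this provides enough coercivity to make the $C^{1,1}_{\mathrm{loc}}$ convex extension problem solvable with uniformly controlled Whitney and $\mu$-type seminorms on every ball (parts (1)--(2)). Crucially, part (3) is \emph{not} obtained by forcing each $H_j$ to be steep on a fixed ball; it is obtained indirectly: if $\sup_j\rho_k(H_j)<\infty$ for every $k$, an Arzel\`a--Ascoli diagonal argument produces a limit $H\in C^{1,1}_{\mathrm{loc}}$ convex with $H=|x|$ on $E$, and then the rigidity argument of Example \ref{example of jet with Holder wedge and no extension} (applied once, to the limit, using the vertical line $x=1$ in $E$) forces $H(x,y)=|x|$ on all of $\R^2$, contradicting differentiability of $H$ along $x=0$. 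So the jets do not converge to a solvable problem; the impossibility shows up only in the limit.

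Your proposal instead aims for a direct argument: construct jets built from $c_j|x|^{3/2}$-type wedges of growing amplitude and prove, for each $j$, a rigidity lemma forcing $H_j$ to coincide with the $j$-th steep wedge near a \emph{fixed} ball. You correctly identify this rigidity lemma as the main obstacle, and I do not think the workarounds you sketch close it. Concretely: (a) If the bands are far apart, the forced steepness occurs on a ball whose center escapes, and you cannot conclude $\sup_j\rho_k(H_j)=\infty$ for a fixed $k$. (b) If the bands accumulate in a fixed ball and the $j$-th jet is ``a steep wedge on band $j$, flat elsewhere,'' then convexity of any extension couples the bands: a convex function cannot be zero on bands on both sides of band $j$ and a positive steep wedge in between, so either the jets are not mutually compatible with convexity and (1) fails, or you must reshape them so that they are, at which point the uniform bounds in (2) are no longer ``decoupled band-by-band'' and need to be re-derived. (c) Even granting a rigidity lemma of the form ``$H_j$ coincides with $c_j|x-a_j|^{3/2}$ near some fixed ball,'' to run the argument for a fixed $E$ and infinitely many $j$ you need the domain $E$ to contain, for each $j$, an unbounded line or ray along which $f_j$ is bounded (this is what forced $\partial H/\partial y=0$ in the example), and you have not exhibited an $E$ that does this for all $j$ simultaneously while keeping the jets extendable. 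The paper sidesteps all of this by keeping one family of vertical rays for all $j$ (the set $E_1$) and letting the obstruction appear only in the $j\to\infty$ limit, where compactness converts uniform seminorm bounds into a contradiction.

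In short: your high-level instinct (use the H\"older-wedge-at-infinity rigidity phenomenon) is the same as the paper's, but the mechanism you propose (direct forced blow-up at a fixed ball via a rigidity lemma for each $j$) is different, unproved, and as sketched runs into the concrete compatibility and localization problems above. The paper's compactness argument is the missing idea that makes the plan close.
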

\begin{proof}
Let 
$$
E:=E_1\cup E_2,
$$
where
$$
E_1=\{(x,y)\in\R^2 \, : \, |x|\geq e^{y}\} \, \textrm{ and } \, E_2= \{(x,y)\in\R^2 \, : \, |x|=1, y\in\N\},
$$ 
and define the sequence of $1$-jets $(f_j, G_j):E\to\R\times\R^2$ by
$$
f_j(x,y)=
\begin{cases}
|x| & \textrm{ if } (x,y)\in E_1 \\
1 & \textrm{ if } (x, y)\in E_2, 1\leq y\leq j+1 \\
2(y-j-1) & \textrm{ if } (x, y)\in E_2, y>j+1
\end{cases}
$$
and
$$
G_j(x,y)=
\begin{cases}
(-1,0) & \textrm{ if } (x,y)\in E_1, x<0 \\
(1,0) & \textrm{ if } (x,y)\in E_1, x>0 \\
(-1,0) & \textrm{ if } (x,y)\in E_2, x<0, 1\leq y\leq j+1 \\
(1,0) & \textrm{ if } (x,y)\in E_2, x>0, 1\leq y\leq j+1 \\
(0, 2) & \textrm{ if } (x,y)\in E_2, y>j+1.
\end{cases}
$$
Note that
\begin{equation}\label{expression for putative tangent planes in counterexample}
f(u,v)+\langle G(u,v), (x-u, y-v)\rangle=
\begin{cases}
-x & \textrm{ if } (u,v)\in E_1, u<0 \\
x & \textrm{ if } (u,v)\in E_1, u>0 \\
-x & \textrm{ if } (u,v)\in E_2, u<0, 1\leq v\leq j+1 \\
x & \textrm{ if } (u,v)\in E_2, u>0, 1\leq v\leq j+1 \\
2(y-j-1) & \textrm{ if } (u,v)\in E_2, v>j+1,
\end{cases}
\end{equation}
and in particular
\begin{equation}
m_{j}(x,y) :=\sup_{(u,v)\in E}\{f(u,v)+\langle G(u,v),(x, y)-(u,v)\rangle\}=
\max\left\{|x|, 2(y-j-1)\right\},
\end{equation}
To prove $(1)$ we are going to use Theorem \ref{Corollary to C11 Whitney thm for coercive convex functions}: we seek, for each $j\in\N$, a suitable family of functions $\{\varphi_{(j,u,v)}\}_{(u, v)\in E}$ of the form\footnote{It will be possible to find a family of quadratic functions $\{\varphi_{(j,u,v)}\}_{(u,v)\in E}$ satisfying the assumptions of Theorem \ref{Corollary to C11 Whitney thm for coercive convex functions} because $m_j$ has linear growth at infinity. When $m_j(x)$ grows faster than quadratically as $|x|\to\infty$, it is impossible to use Theorem \ref{Corollary to C11 Whitney thm for coercive convex functions} with functions of this form.}
$$
\varphi_{(j,u,v)}(x,y)=A_{j,u,v}\left( (x-u)^2+ (y-v)^2\right),
$$
where $A_{j, u, v}$ are positive numbers depending only on $j, u, v$.
We have to check that for every $j\in\N$ and $(u,v)\in E$ there exists some number $A=A_{j, u, v}>0$ so that for every $(x,y)\in\R^2$ we have that
\begin{multline}\label{minimal below g in example showing that the trace norms blow up}
\max\left\{|x|, 2(y-j-1)\right\}\leq \\
\begin{cases}
-x+A\left( (x-u)^2+(y-v)^2\right) & \textrm{ if } (u,v)\in E_1, u<0 \\
x +A\left( (x-u)^2+(y-v)^2\right) & \textrm{ if } (u,v)\in E_1, u>0 \\
-x+A\left( (x-u)^2+(y-v)^2\right) & \textrm{ if } (u,v)\in E_2, u<0, 1\leq v\leq j+1 \\
x+A\left( (x-u)^2+(y-v)^2\right) & \textrm{ if } (u,v)\in E_2, u>0, 1\leq v\leq j+1 \\
2(y-j-1)+A\left( (x-u)^2+(y-v)^2\right) & \textrm{ if } (u,v)\in E_2, v>j+1.
\end{cases}
\end{multline}
To this end, let us consider the functions $h_i=h_{i}^{(j,u,v)}:\R^2\to\R$, $i=1, ..., 9$, $j\in\N$, $(u,v)\in E$, defined by
\begin{eqnarray*}
h_1(x,y) = & -x +A\left( (x-u)^2+(y-v)^2\right) -|x|  & \textrm{ if } (u,v)\in E_1, u<0\\
h_2(x,y) = & x +A\left( (x-u)^2+(y-v)^2\right) -|x|  & \textrm{ if } (u,v)\in E_1, u<0\\
h_3(x,y) = & -x +A\left( (x-u)^2+(y-v)^2\right) -2(y-j-1)  & \textrm{ if } (u,v)\in E_1, u<0 \\
h_4(x,y) = & x +A\left( (x-u)^2+(y-v)^2\right) -2(y-j-1)  & \textrm{ if } (u,v)\in E_1, u>0 \\
h_5(x,y) = & -x +A\left( (x-u)^2+(y-v)^2\right) -|x|  &  \textrm{ if } (u,v)\in E_2, u<0, 1\leq v\leq j+1 \\
h_6(x,y) = & x +A\left( (x-u)^2+(y-v)^2\right) -|x|  &  \textrm{ if } (u,v)\in E_2, u>0, 1\leq v\leq j+1 \\
h_7(x,y) = & -x +A\left( (x-u)^2+(y-v)^2\right) -2(y-j-1)  & \textrm{ if } (u,v)\in E_2, u<0, 1\leq v\leq j+1 \\
h_8(x,y) = & x +A\left( (x-u)^2+(y-v)^2\right) -2(y-j-1)  & \textrm{ if } (u,v)\in E_2, u>0, 1\leq v\leq j+1 \\
h_9(x,y) = & 2(y-j-1) +A\left( (x-u)^2+(y-v)^2\right) -|x|  & \textrm{ if } (u,v)\in E_2, v>j+1.
\end{eqnarray*}
For each $j\in\N$, $(u,v)\in E$, we want to find some $A=A_{j,u,v}\geq 0$ such that these functions satisfy $h_i^{(j,u,v)}(x,y)\geq 0$ for all $(x,y)\in\R^2$. Finding the minima of these piecewise quadratic functions is routine. We have
$$
h_2(x,y)\geq 
\begin{cases}
h_2(u, v)=0 & \textrm{ if } x\geq 0 \\
h_2(u-\tfrac{1}{A}, v)=2u-\frac{1}{A} & \textrm{ if } x\leq 0,
\end{cases}
$$
and since in this case we have $(u, v)\in E_1$, $u>0$, we obtain that
$h_2(x,y)\geq 0$ for all $(x,y)\in\R^2$ provided that 
$
A\geq \frac{1}{2u}.
$
Similarly, or just noting that $h_{1}^{(j,u,v)}(x,y)=h_{2}^{(j,-u,v)}(-x,y)$, we also obtain that $h_1(x,y)\geq 0$ if we take $A\geq \frac{1}{2|u|}.$ 

On the other hand, bearing in mind that  $u\geq e^v$ when $(u, v)\in E_1$ and $u>0$, we have
\begin{eqnarray*}
& & h_4(x,y)\geq h_4(u-\tfrac{1}{2A}, v+\tfrac{1}{A}) =
u-2v+2(j+1)-\tfrac{5}{4A}\\
& & \geq e^v -2v +2(j+1)-\tfrac{5}{4A}\geq
2(1-\log 2)+2(j+1)-\tfrac{5}{4A}\\
& &
\geq 2(j+1)-\tfrac{5}{4A}\geq 0
\end{eqnarray*}
provided that we further require that 
$
A\geq \frac{5}{8(j+1)}.
$
Noting that $h_{3}^{(j, u,v)}(x,y)=h_{4}^{(j, -u,v)}(-x,y)$, we also obtain that $h_3(x,y)\geq 0$ for such an $A$. 

Next, for $i=5,6$ we have
$$
h_{5}^{(j,u,v)}(x,y)=h_{6}^{(j,-u,v)}(-x,y),
$$
and also (noticing that $u=1$ when $(u,v)\in E_2, u>0$)
$$
h_6(x,y)\geq \min\left\{0, 2u-\tfrac{1}{A}\right\}=
\min\left\{0, 2-\tfrac{1}{A}\right\}\geq 0,
$$
provided that we take $A\geq \tfrac{1}{2}$. 

For $i=8$, recalling that $(u,v)\in E_2, u>0$ and $v\leq j+1$ if and only if $u=1$, $v\in\N $ and $v\leq j+1$, we get 
\begin{multline*}
h_8(x,y)\geq h_8\left(u-\tfrac{1}{2A}, v+\tfrac{1}{A}\right)=
u-2v+2(j+1)-\tfrac{5}{4A} \\ =1+2(j+1-v)-\tfrac{5}{4A}\geq 1-\tfrac{5}{4A}\geq 0,
\end{multline*}
whenever $A\geq 5/4,$
and since $h_{7}^{(j, u,v)}(x,y)=h_{8}^{(j, -u,v)}(-x,y)$, we also obtain that $h_7(x,y)\geq 0$ with the same $A$.

Lastly, for $i=9$, noting that if $(u,v)\in E_2$ then $v\in\N, v\geq j+2$, $|u|=1$, we have
\begin{multline*}
h_9(x,y)\geq 
\min\{h_9\left(u+\tfrac{1}{2A}, v-\tfrac{1}{A}\right), h_9\left(u-\tfrac{1}{2A}, v-\tfrac{1}{A}\right) \} \\ \geq
2\left(v-j-1\right) -|u|-\tfrac{5}{4A}\geq 1-\tfrac{5}{4A}
\geq 0
\end{multline*} provided that $A\geq 5/4$.

In conclusion we see that inequality \eqref{minimal below g in example showing that the trace norms blow up} is satisfied for 
$$
A=A_{j,u,v}=
\max\left\{\frac{1}{2|u|}, \frac{5}{4} \right\}.
$$
Also note that, for each $R\geq 1$, since $\frac{1}{|u|}\leq e^{R}$ for all $(u, v)\in E\cap B(0,R)$,  we have
\begin{align*}
& & M_{j,R}:=\sup\left\{\textrm{Lip}\left( (\varphi_{(j,u,v)})_{|_{B(0,R)}}\right) : (u,v)\in E\cap B(0,R)\right\} \\ & & =
\sup\left\{ 2A_{j,u,v} : (u,v)\in E\cap B(0,R)\right\} 
\leq \max\left\{e^{R}, \tfrac{10}{4} \right\}<\infty.
\end{align*}
Therefore we can apply Theorem \ref{Corollary to C11 Whitney thm for coercive convex functions} so as to obtain,
for 
each $j\in\N$, a convex function $F_j\in C^{1,1}_{\textrm{loc}}(\R^n)$ such that $(F_j, \nabla F_j)_{|_E}=(f_j, G_j)$. We have thus proved $(1)$.

To prove $(2)$, taking for instance $(x_0,y_0)=(1,1)\in E$ and setting
$$
\mu_{0, E}(f_j, G_j)=|f_j(1,1)|+|G_j(1,1)|=2,
$$
we note that the preceding estimate for $M_{j, R}$ implies that
\begin{equation}
\mu_{k, E}(f_j, G_j)\leq \max\left\{e^{k}, 3 \right\} \textrm{ for all } k, j\in\N\cup\{0\},
\end{equation}
and therefore
\begin{equation}
\sup_{j\in\N}\mu_{k, E}(f_j, G_j)<\infty \textrm{ for all } k\geq 0.
\end{equation}
It is also easy to see that $\sup_{j\in\N}\rho^{W}_{k, E}(f_j, G_j)<\infty$ and  $\sup_{j\in\N}\rho^{CW}_{k, E}(f_j, G_j)<\infty$ for all $k\geq 0$. This shows $(2)$.

Finally, let us prove $(3)$. 
Let $\{H_j\}_{j\in\N}$ be a sequence of convex functions of class $C^{1,1}_{\textrm{loc}}(\R^n)$ such that $(H_j, \nabla H_j)_{|_E}=(f_j, G_j)$ for every $j\in\N$, and assume that we had
$$
\sup_{j\in\N}\textrm{Lip}\left( (\nabla H_j)_{|_{B(0,k)}}\right)<\infty \textrm{ for every } k\geq 1.
$$
Since we also have $|H_j(1,1)|+|\nabla H_j(1,1)|=2$ for every $j$, then, for $k=1$, we can apply Arzel\`a-Ascoli's theorem to find a subsequence $\{H_{1,j}\}$  of $\{H_{j}\}$ such that $\{H_{1,j}\}$ and $\{\nabla H_{1,j}\}$ converge uniformly on $B(0,2)$. Then we can apply again Arzel\`a-Ascoli's theorem to find a subsequence $\{H_{2,j}\}$  of $\{H_{1,j}\}$ such that $\{H_{2,j}\}$ and $\{\nabla H_{2,j}\}$ converge uniformly on $B(0,3)$. Continuing this argument by induction, we extract subsequences $\{H_{k,j}\}_{j\in\N}$ of $\{H_{k-1,j}\}_{j\in\N}$ such that $\{H_{k,j}\}$ and $\{\nabla H_{k,j}\}$ converge uniformly on $B(0,k+1)$. Then the diagonal subsequence $\{H_{j,j}\}$ has the property that $\{H_{j,j}\}$ and $\{\nabla H_{j,j}\}$ converge uniformly on $B(0,k)$ for every $k\geq 1$. We deduce that the limit
$$
\lim_{j\to\infty}H_{j,j}(x,y):=H(x,y),
$$
exists locally uniformly, that $H\in C^{1,1}_{\textrm{loc}}(\R^n)$ and also
$$
\nabla H(x,y)=\lim_{j\to\infty}\nabla H_{j,j}(x,y)
$$
locally uniformly. Moreover, since the pointwise limit of convex functions is convex, we have that $H$ is convex. Also, because $\lim_{j\to\infty}H_j(\pm 1, n)=\lim_{j\to\infty}f_j(\pm 1,n)=1$, we have that $H(\pm 1,n)=1$ for every $n\in\N$. And of course, since $H_j(x,y)=f_j(x,y)$ for all $(x,y)\in E$ we have $H(x,y)=|x|$ if $|x|\geq e^{y}$.

Summing up, we have obtained a convex function $H\in C^{1,1}_{\textrm{loc}}(\R^2)$ such that
$H(x,y)=|x|$ for all $(x,y)\in E$. As we are about to see, this implies that $H(x, y)=|x|$ for all $(x,y)\in\R^2$, and in particular $H$ cannot be differentiable at any point of the line $x=0$, a contradiction. Indeed, for every $(x_0, y_0)\in\R^2$ we have
$$
1=H(1,n)\geq H(x_0, y_0)+(1-x_0)\frac{\partial H}{\partial x}(x_0,y_0)+
(n-y_0)\frac{\partial H}{\partial y}(x_0,y_0)\textrm{ for all } n\in\Z,
$$
which implies 
$$
\frac{\partial H}{\partial y}(x_0,y_0)=0
$$
for all $(x_0,y_0)\in\R^2$.  Then, for each $x\in\R$, the function $\R\ni y\mapsto H(x,y)\in\R$ does not depend on $y$. Since for every $(x,y)\in \R^2$ with $x\neq 0$ there exists some $y_0$ with $(x,y_0)\in E_1$, we deduce that $H(x,y)=H(x,y_0)=|x|$. Thus $H(x,y)=|x|$ for all $(x,y)\in\R^2$ with $x\neq 0$, and by continuity also for all $(x,y)\in\R^2$.

This argument shows that we must have $$\sup_{j\in\N}\rho_{k}(H_j)=
\sup_{j\in\N}\textrm{Lip}\left( (\nabla H_j)_{|_{B(0,k)}}\right)=\infty$$ for some $k=k_0\geq 1$ (hence also for all $k\geq k_0$).
\end{proof}

\medskip

\begin{rem}\label{remark on the local Lip constants of the gradient of F}
{\em As we have just shown, there cannot be any method for $C^{1,1}_{\textrm{loc}}$ convex extension of jets that allows us to control the Fr\'echet seminorms of the extensions in terms of the functionals $\rho^{W}_{k, E}$, or $\rho^{CW}_{k, E}$, or
$\mu_{k, E}$. If one needs to estimate the Lipschitz constant of the restriction of the function $F$ of \eqref{formula for Fa in thm2} to some ball $B(0, k)$, by keeping track of the constants and radii appearing in the proof of Theorem \ref{Corollary to C11 Whitney thm for coercive convex functions}, denoting $\nu(R):=M_R$ (the function given by condition \eqref{finite sup of Lip constants of phiy on balls corollary}), and assuming without loss of generality that $\eta(R)\geq 2R$,
where $\eta(R)$ is given by \eqref{the inf restricts to balls condition}, and
that $k\geq R_0$,
where $R_0=|z_0|$ for some $z_0\in E$,
we see that
\begin{eqnarray*}
& & \rho_{k}(F):=
\sup\left\{\frac{|\nabla F(x)-\nabla F(y)|}{|x-y|} \, : \, x, y\in B(0,k), x\neq y\right\}\leq \\
 & &
(n+1)\nu\left( \eta\left( (n+1)\left( k+\frac{1}{\delta}(n+1)\left( |f(z_0)| +2k|G(z_0)| +2k^2 \nu(k)\right)+\frac{1}{\delta^2}\right)\right)\right),
\end{eqnarray*}
where $\delta>0$ is any number such that for some $v\in\R^n$ the function $x\mapsto m(x)-\langle v, x\rangle$ is coercive (where $m(x):=\sup_{y\in E}\{f(y)+\langle G(y), x-y\rangle\}$) and $m(x)-m(z_0)-\langle v, x-z_0\rangle \geq \delta|x-z_0|-\frac{1}{\delta}$  for all $x\in\R^n$. On the other hand, the proof of Lemma \ref{if varphiy then varphitilde restricts} shows that for Theorem \ref{Corollary to C11 Whitney thm for coercive convex functions} one can take
$$
\eta(R)=R+(R+R_0)\sqrt{1+\nu(R)/2a}.
$$
As we see (even if we take $a=1$) these bounds not only depend on $n$, $k$ and $\nu$, but also on the number $\delta$, which somehow measures essential coerciveness of the function $m(x)$. This kind of dependence is inevitable: unless $g$ satisfies a global estimate of the kind $g(x+h)+g(x-h)-2g(x)\leq C|h|^2$, in order that $F=\textrm{conv}(g)$ be differentiable, the function $g$ must be essentially coercive. The less essentially coercive $g$ is, the greater the estimates of the local Lipschitz constants of the gradient of $F$ are bound to be. On the other hand, in the proof of the preceding proposition we saw that the seminorms of the extensions $H_j$ blow up as the functions $H_j(x,y)$ are forced to be  closer and closer to $|x|$ when $j\to\infty$. This indicates that, for any extension operator
$$
\mathcal{J}^{1,1 \, \textrm{loc}}_{\textrm{conv}}(E)\ni (f, G)\mapsto \mathcal{E}(f,G)\in 
C^{1,1 \, \textrm{loc}}_{\textrm{conv}}(\R^n),
$$
a measure of essential coerciveness of the minimal extension functions $m_{(f,G)}(x)=\sup_{y\in E}\{f(y)+\langle G(y), x-y\rangle\}$ defined by a given family of jets $(f, G)$ is a factor that one must consider if one wishes to be able to control the seminorms $\rho_{k}(\mathcal{E}(f, G))$ of the resulting family of extensions. In this direction, the above estimate for $\rho_k(F)$ yields the following result (for simplicity we only consider the case that $\textrm{span}\{G(y)-G(z): y,z\in E\}=\R^n$).
}
\end{rem}

For a point $x_0\in E$ and a $1$-jet $(f, G)$ on $E$, let us denote, for $k\geq 1$,
$$
\mu_{k, E, x_0}(f,G):= \\
\inf_{\varphi_y}\left\{  \sup\left\{\frac{|\nabla\varphi_{y}(u)-\nabla\varphi_{y}(v)|}{|u-v|} \, : \, y\in E\cap B(x_0, k)), u, v\in B(x_0, k), u\neq v \right\}\right\},
$$
where the infimum is taken over all the families of functions $\{\varphi_y\}$ satisfying the conditions of Theorem \ref{MainTheorem with P}. If there exists no such family, we deem $\mu_{k, E, x_0}(f, G)=\infty$ for all $k$. Define also
$$
\mu_{0, E, x_0}(f, G)=|f(x_0)|+|G(x_0)|.
$$
Similarly, for any function $H\in C^{1,1}_{\textrm{loc}}(\R^n$) and $k\in\N$, let us denote
$$
\rho_{k, x_0}(H)=\textrm{Lip}\left( (\nabla H)_{|_{B(x_0, k)}}\right),
$$
and also
$$
\rho_{0, x_0}(H)=|H(x_0)|+|\nabla H(x_0)|.
$$
\begin{thm}\label{when uniformly essentially coercive it is possible to control seminorms}
Let $(f_{\alpha}, G_{\alpha})_{\alpha\in\mathcal{A}}$ be a family of $1$-jets on a nonempty subset $E$ of $\R^n$. Assume that these jets are {\em uniformly essentially coercive}, in the sense that there exist some $\delta>0$ and some point $x_0\in E$ such that for every $\alpha\in\mathcal{A}$ there exists a vector $v_{\alpha}\in\R^n$ so that
$$
m_{\alpha}(x):=\sup_{y\in E}\{f_{\alpha}(y)+\langle G_{\alpha}(y), x-y\rangle\}\geq 
f_{\alpha}(x_0)+ \langle v_{\alpha}, x-x_0\rangle + \delta |x-x_0|-\tfrac{1}{\delta} 
$$
for all $x\in\R^n$. Assume also that for every $\alpha\in\mathcal{A}$ the jet $(f_{\alpha}, G_{\alpha})$ satisfies the conditions of Theorem \ref{Corollary to C11 Whitney thm for coercive convex functions}, and that
$$
\sup_{\alpha\in\mathcal{A}}\mu_{k, E, x_0}(f_{\alpha}, G_{\alpha})<\infty \textrm{ for every } k\in \N\cup\{0\}.
$$
Then, calling $F_{\alpha}$ the extension of $(f_{\alpha}, G_{\alpha})$ given by formula \eqref{formula for Fa in thm2} with $a=1$, we have that
$$
\sup_{\alpha\in\mathcal{A}}\rho_{k, x_0}(F_{\alpha})<\infty \textrm{ for every } k\in \N\cup\{0\}.
$$
\end{thm}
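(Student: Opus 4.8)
The plan is to derive the statement entirely from the explicit a priori bound for $\rho_k(F)$ recorded in Remark \ref{remark on the local Lip constants of the gradient of F}, by checking that, under the present hypotheses, each quantity entering that bound can be taken uniform in $\alpha$. To begin, I would fix once and for all the distinguished point $z_0:=x_0\in E$ of that remark, so that $R_0:=|x_0|$ becomes a constant independent of $\alpha$. For every $\alpha$, since $(f_\alpha,G_\alpha)$ satisfies the conditions of Theorem \ref{Corollary to C11 Whitney thm for coercive convex functions} it does admit a convex $C^{1,1}_{\textrm{loc}}$ extension, hence $m_\alpha(x_0)=f_\alpha(x_0)$; and the uniform essential coercivity hypothesis furnishes, for each $\alpha$, a vector $v_\alpha$ with $m_\alpha(x)\ge m_\alpha(x_0)+\langle v_\alpha,x-x_0\rangle+\tfrac1\delta$ replaced correctly, i.e. $m_\alpha(x)\ge m_\alpha(x_0)+\langle v_\alpha,x-x_0\rangle+\delta|x-x_0|-\tfrac1\delta$ on $\R^n$, with \emph{the same} $\delta$ for all $\alpha$. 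This supplies exactly the ``measure of essential coerciveness'' that the remark's bound requires, uniformly in $\alpha$.

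Next I would pin down the families of auxiliary functions. The only remaining $\alpha$-dependent ingredients of the bound in Remark \ref{remark on the local Lip constants of the gradient of F} are $|f_\alpha(x_0)|+|G_\alpha(x_0)|$, which equals $\mu_{0,E,x_0}(f_\alpha,G_\alpha)$ and is bounded uniformly by hypothesis, and the nondecreasing gauge $\nu_\alpha(R):=M_R^{(\alpha)}$ attached, through condition \eqref{finite sup of Lip constants of phiy on balls corollary}, to the family $\{\varphi_y^{(\alpha)}\}$ fed into formula \eqref{formula for Fa in thm2}. So the crux is to \emph{select}, for each $\alpha$, a family $\{\varphi_y^{(\alpha)}\}$ meeting all requirements of Theorem \ref{Corollary to C11 Whitney thm for coercive convex functions} and, in addition, such that $\sup_\alpha M_R^{(\alpha)}<\infty$ for every $R>0$. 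What is available is that $c_k:=\sup_\alpha\mu_{k,E,x_0}(f_\alpha,G_\alpha)<\infty$ for every $k$, i.e. that for each $\alpha$ and each integer scale $m$ there is \emph{some} admissible family whose Lipschitz supremum over $B(x_0,m)$ does not exceed $c_m+1$. One then has to amalgamate these scale-$m$ near-optimal families ($m=1,2,\dots$) into a single family $\{\varphi_y^{(\alpha)}\}$ whose $M_R^{(\alpha)}$ remains dominated, for every $R$, by an explicit expression in $c_1,\dots,c_{\lceil R\rceil},R,n$; alternatively one may sidestep families altogether and invoke the explicit passage from the numbers $A_k$ to the functions $\varphi_y$ carried out in Section 5.3, which exhibits $M_R^{(\alpha)}$ as a concrete function of finitely many $A_k^{(\alpha)}$, the index range being determined by $R$ and $n$ only, so that $\sup_\alpha A_k^{(\alpha)}<\infty$ for each $k$ (a consequence of the $\mu$-hypothesis) yields $\sup_\alpha M_R^{(\alpha)}<\infty$ for each $R$. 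I expect this amalgamation --- keeping the local Lipschitz constant of $\varphi_y^{(\alpha)}$ controlled on balls far larger than the ``native scale'' of $y$ --- to be the one genuinely delicate point; after it, the proof is bookkeeping. Having made such a choice, define $F_\alpha$ by formula \eqref{formula for Fa in thm2} with $a=1$; it is an admissible extension by Theorem \ref{Corollary to C11 Whitney thm for coercive convex functions}.

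Finally I would run the estimate of Remark \ref{remark on the local Lip constants of the gradient of F}. Put $N_R:=\sup_\alpha\nu_\alpha(R)<\infty$ and $N_0:=\sup_\alpha(|f_\alpha(x_0)|+|G_\alpha(x_0)|)<\infty$, and fix $k\ge\lceil R_0\rceil$. With $a=1$ the gauge occurring in the remark is $\eta_\alpha(R)=R+(R+R_0)\sqrt{1+\nu_\alpha(R)/2}$, which is nondecreasing because both $R\mapsto R+R_0$ and $R\mapsto\nu_\alpha(R)$ are. The innermost argument
$$
r_\alpha:=(n+1)\Big(k+\tfrac1\delta(n+1)\big(|f_\alpha(x_0)|+2k|G_\alpha(x_0)|+2k^2\nu_\alpha(k)\big)+\tfrac1{\delta^2}\Big)
$$
satisfies $r_\alpha\le\overline r_k$ for an explicit $\overline r_k=\overline r_k(n,k,\delta,N_0,N_k)$; hence $\eta_\alpha(r_\alpha)\le\eta_\alpha(\overline r_k)\le\overline r_k+(\overline r_k+R_0)\sqrt{1+N_{\overline r_k}/2}=:\overline\eta_k$; and, using once more that $\nu_\alpha$ is nondecreasing,
$$
\rho_{k,x_0}(F_\alpha)\le (n+1)\,\nu_\alpha\big(\eta_\alpha(r_\alpha)\big)\le (n+1)\,\nu_\alpha(\overline\eta_k)\le (n+1)\,N_{\overline\eta_k}<\infty,
$$
a bound independent of $\alpha$. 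For $0<k<\lceil R_0\rceil$ one uses $\rho_{k,x_0}(F_\alpha)\le\rho_{\lceil R_0\rceil,x_0}(F_\alpha)$, and for $k=0$ one has $\rho_{0,x_0}(F_\alpha)=|f_\alpha(x_0)|+|G_\alpha(x_0)|\le N_0$ since $x_0\in E$ and $F_\alpha$ interpolates the jet there. This gives $\sup_\alpha\rho_{k,x_0}(F_\alpha)<\infty$ for every $k\in\N\cup\{0\}$, as desired.
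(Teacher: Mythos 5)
Your approach is exactly the paper's: the paper supplies no separate displayed proof of Theorem \ref{when uniformly essentially coercive it is possible to control seminorms} and simply states, at the end of Remark \ref{remark on the local Lip constants of the gradient of F}, that the estimate for $\rho_k(F)$ recorded there ``yields the following result.'' Your final paragraph correctly traces every $\alpha$-dependent quantity entering that estimate --- $\delta$, $R_0=|x_0|$, $|f_\alpha(x_0)|+|G_\alpha(x_0)|=\mu_{0,E,x_0}(f_\alpha,G_\alpha)$, and the gauge $\nu_\alpha(R)=M_R^{(\alpha)}$ together with the resulting $\eta_\alpha$ --- observes that all of them except $\nu_\alpha$ are controlled uniformly by the hypotheses, and carries out the monotonicity bookkeeping correctly.

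The one place you explicitly stop short is the one you flag: the hypothesis $\sup_\alpha\mu_{k,E,x_0}(f_\alpha,G_\alpha)<\infty$ only guarantees, for each fixed $k$ separately, \emph{some} admissible family whose scale-$k$ Lipschitz supremum is uniformly bounded; it does not, by itself, hand you a single family $\{\varphi_y^{(\alpha)}\}$ whose $M_R^{(\alpha)}$ is controlled for \emph{all} $R$ at once. Picking the scale-$m$ near-optimizer gives no control of $M_R^{(\alpha)}$ for $R>m$, and the amalgamation you gesture at must also preserve conditions \eqref{phiy equals 0 at y}, \eqref{finite sup of Lip constants of phiy on balls corollary}, \eqref{every tangent function lies above all tangent planes corollary} (the last of which rules out naively taking pointwise minima without further care for smoothness). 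Your second suggested route, reading off $M_R^{(\alpha)}$ from the $A_k^{(\alpha)}$'s via the construction of Section 5.3, does not immediately close the gap either: the Hessian bound obtained at the end of that construction also involves $\sup_{B(0,R)}\|D^2\psi_\alpha\|$ and $\mathrm{Lip}(\psi_\alpha|_{B(0,R)})$ for a smooth convex approximation $\psi_\alpha$ of $m_\alpha$, and those are not automatically uniform in $\alpha$ without a more careful (e.g.\ mollification-based) choice of $\psi_\alpha$ exploiting the uniform coercivity and the uniform local bounds on $f_\alpha, G_\alpha$. To be fair, the paper itself says nothing about how the $\mu_k$-hypothesis feeds into the choice of $\{\varphi_y^{(\alpha)}\}$; you have not missed an argument the paper gives, but rather made explicit, and honestly flagged as open, the one step that a complete proof would still have to supply.
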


\section{Some applications}

As we already mentioned, our results are essential in the proof of the following theorem from \cite{AzagraHajlasz}, which tells us that essentially coercive convex functions satisfy a {\em Lusin property of class $C^{1,1}_{\textrm{loc}}$ and convex.}

\begin{thm}[Azagra-Haj\l asz]
Let $f:\R^n\to\R$ be a convex function, and assume that $f$ is not of class $C^{1,1}_{\textrm{loc}}(\R^n)$. Then $f$ is essentially coercive if and only if for every $\varepsilon>0$ there exists a convex function $g:\R^n\to\R$ of class $C^{1,1}_{\textrm{loc}}(\R^n)$ such that $\mathcal{L}^{n}\left(\{x\in \R^n : f(x)\neq g(x)\}\right)\leq\varepsilon$. 
\end{thm}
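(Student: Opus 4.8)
The plan is to prove the two implications separately. The forward direction --- essential coercivity $\Rightarrow$ the Lusin property --- is the one in which the extension results of this paper are used, while the converse is an elementary rigidity argument in the spirit of Example~\ref{example of jet with Holder wedge and no extension}. For the forward direction, fix $\varepsilon>0$; the goal is to produce a set $E\subseteq\R^n$ with $\mathcal{L}^n(\R^n\setminus E)\le\varepsilon$ on which the $1$-jet $(f_{|_E},(\nabla f)_{|_E})$ satisfies the hypotheses of Theorem~\ref{Corollary 2 to the main coercive result}, so that the extension $F$ it provides is convex, of class $C^{1,1}_{\textrm{loc}}$, and agrees with $f$ on $E$; then $\{F\ne f\}\subseteq\R^n\setminus E$ has measure at most $\varepsilon$. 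To build $E$, let $\Omega$ be the full-measure set of differentiability points of $f$ and, for $m\in\N$, put
\[
S_m:=\left\{y\in\Omega:\ f(x)-f(y)-\langle\nabla f(y),x-y\rangle\le m|x-y|^2 \ \textrm{ for all } x\in B(y,1)\right\}.
\]
By Alexandrov's theorem $f$ is twice differentiable a.e., and at such a point the displayed quantity is $O(|x-y|^2)$ as $x\to y$ while it is $\le 2\,\textrm{Lip}(f,B(y,1))|x-y|$ for $x$ away from $y$; hence the sets $S_m$ increase to a set of full measure. Choosing $m_1\le m_2\le\cdots$ with $\mathcal{L}^n(B(0,k)\setminus S_{m_k})\le\varepsilon 2^{-k}$ and setting $E:=\bigcap_{k}\left(S_{m_k}\cup(\R^n\setminus B(0,k))\right)$, one gets $\mathcal{L}^n(\R^n\setminus E)\le\varepsilon$ and, crucially, $E\cap B(0,k)\subseteq S_{m_k}$ for every $k$.

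Next I would verify the hypotheses of Theorem~\ref{Corollary 2 to the main coercive result}. Essential coercivity of $f$ means $X_f=\R^n$; by Theorem~\ref{rigid global behavior of convex functions}, together with the standard fact that every subgradient of a convex function is a convex combination of limits of gradients at nearby differentiability points, $\textrm{span}\{\nabla f(z)-\nabla f(z'):z,z'\in\Omega\}=\R^n$. Choosing finitely many differentiability points realizing this span and taking the $m_k$ large enough that those points lie in $E$, we may assume $\textrm{span}\{\nabla f(x)-\nabla f(y):x,y\in E\}=\R^n$, i.e.\ \eqref{essentially coercive data corollary 2} holds. As for \eqref{every tangent function lies above all tangent planes corollary 2}: given $z\in E$, $y\in E\cap B(0,k)$ and $x\in B(0,4k)$, the supporting hyperplane of $f$ at $z$ gives $f(z)+\langle\nabla f(z),x-z\rangle\le f(x)$; since $y\in S_{m_k}$ we get $f(x)\le f(y)+\langle\nabla f(y),x-y\rangle+m_k|x-y|^2$ whenever $|x-y|\le 1$, whereas for $1\le|x-y|\le 5k$ convexity plus the Lipschitz bound of $f$ on $B(0,5k)$ give $f(x)\le f(y)+\langle\nabla f(y),x-y\rangle+2L_k|x-y|^2$, with $L_k:=\textrm{Lip}(f,B(0,5k))$. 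Hence \eqref{every tangent function lies above all tangent planes corollary 2} holds with $A_k:=\max\{2,2m_k,4L_k\}$, and Theorem~\ref{Corollary 2 to the main coercive result} supplies the required $F$, which agrees with $f$ on $E$.

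For the converse, ``Lusin property $\Rightarrow$ essentially coercive'', I would argue by contraposition. If $f$ is not essentially coercive then, by Theorem~\ref{rigid global behavior of convex functions}, $f(x)=c_f(P_{X_f}x)+\langle v_f,x\rangle$ with $X_f\ne\R^n$, so along every line in any direction $e\in X_f^{\perp}$ the function $f$ is affine with slope $\langle v_f,e\rangle$. Suppose $g\in C^{1,1}_{\textrm{loc}}(\R^n)$ is convex with $\mathcal{L}^n(\{f\ne g\})<\infty$. By Fubini, for $\mathcal{L}^{n-1}$-a.e.\ line $L$ in the direction $e$ the set $\{f\ne g\}\cap L$ has finite length, so $g_{|_L}$ is convex and coincides with the affine function $f_{|_L}$ on a subset of $L$ unbounded in both directions; an elementary convexity argument (exactly as in Example~\ref{example of jet with Holder wedge and no extension}) then forces $g_{|_L}=f_{|_L}$, hence $\langle\nabla g,e\rangle\equiv\langle v_f,e\rangle$ on $L$. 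Since $\nabla g$ is continuous, this holds on all of $\R^n$ for every $e\in X_f^{\perp}$, so $g(x)-\langle v_f,x\rangle$ depends only on $P_{X_f}x$; writing $g=h\circ P_{X_f}+\langle v_f,\cdot\rangle$ with $h$ convex and of class $C^{1,1}_{\textrm{loc}}$ on $X_f$, and using that $h\circ P_{X_f}=c_f\circ P_{X_f}$ a.e.\ with both sides continuous, we get $h=c_f$ and thus $g=f$, contradicting $f\notin C^{1,1}_{\textrm{loc}}$. Therefore the Lusin property fails for small $\varepsilon$.

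The step I expect to be the main obstacle is the verification of \eqref{every tangent function lies above all tangent planes corollary 2}: this condition asks the tangent paraboloid at each $y\in E\cap B(0,k)$ to dominate $f$ on the \emph{whole} ball $B(0,4k)$, whereas Alexandrov's theorem provides only pointwise second-order control of $f$ near individual points. The key is the simple but essential observation that at scales bounded away from $0$, convexity together with a local Lipschitz estimate for $f$ already yields a quadratic majorant whose constant depends only on $k$; this is what lets one pass from near-diagonal control to the required semi-global inequality. A secondary delicate point is that discarding a set of small measure must not destroy the spanning condition \eqref{essentially coercive data corollary 2}, and it is precisely here, through Theorem~\ref{rigid global behavior of convex functions}, that the essential coercivity of $f$ is indispensable.
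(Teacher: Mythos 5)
The paper does not contain a proof of this theorem: it is quoted as a result from \cite{AzagraHajlasz}, with only the remark that the extension theorems proved here (in particular Theorem \ref{Corollary 2 to the main coercive result}) are essential ingredients. There is therefore no internal proof to compare against, and I can only evaluate your argument on its own merits. That said, your strategy — Alexandrov points plus Theorem \ref{Corollary 2 to the main coercive result} in the forward direction, Fubini plus a one-dimensional convexity rigidity argument in the converse — is sound and almost certainly mirrors the intended approach.

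A few points worth tightening. In the forward direction: (a) you should say explicitly that each $S_m$ is measurable (standard, since the defining condition can be tested on a countable dense set of $x$ and $\nabla f$ is Borel on $\Omega$); (b) the ``finitely many differentiability points realizing the span'' that you force into $E$ must be \emph{Alexandrov} points, not merely first-order differentiability points, since only then does each belong to some $S_m$ — this is harmless (Alexandrov points are dense of full measure and $\nabla f$ is continuous on its domain, so a perturbation preserves linear independence), but it should be stated; (c) you also quietly use that $|\nabla f(y)|\le L_k$ for $y\in B(0,k)$, which follows from the subgradient inequality but deserves a word. In the converse: (d) the one-dimensional rigidity claim — a convex function on $\R$ agreeing with an affine function on a set whose complement has finite measure equals that affine function — is the crux and should be isolated as a lemma; the clean proof is that the difference (affine minus convex) is concave, nonnegative, hence constant, hence zero; (e) the passage from ``the cylinder $\{h\circ P_{X_f}\ne c_f\circ P_{X_f}\}$ has finite $\mathcal{L}^n$-measure while $\dim X_f^{\perp}\ge 1$'' to ``$\{h\ne c_f\}$ is $\mathcal{L}^{\dim X_f}$-null'' should be made explicit, since a cylinder over a base of positive measure has infinite measure in $\R^n$. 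None of these is a gap in the logic; they are exposition issues.
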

A corollary of this result is that, if $S$ is the boundary of some convex set with nonempty interior (not necessarily bounded) in $\R^n$ and $S$ does not contain any line, then for every $\varepsilon>0$ there exists a convex hypersurface $S_{\varepsilon}$ of class $C^{1,1}_{\textrm{loc}}$ such that $\mathcal{H}^{n-1}(S\setminus S_{\varepsilon})<\varepsilon$; see \cite[Corollary 1.13]{AzagraHajlasz}.

We next present and prove two other interesting consequences of our main results.

\subsection{Convex hypersurfaces of class $C^{1,1}_{\textrm{loc}}$ with prescribed tangent hyperplanes}

Theorem \ref{First variant of MainTheorem with P} can be applied to solve the following natural geometrical problem: given an arbitrary subset $E$ of $\R^n$ and a collection $\mathcal{H}$ of affine hyperplanes of $\R^n$ such that every $H\in\mathcal{H}$ passes through some point $x_{H}\in E$, and $E=\{x_H : H\in\mathcal{H}\}$,
what conditions on $\mathcal{H}$ are necessary and sufficient for the existence of a {\em convex} hypersurface $S$ of class $C^{1,1}_{\textrm{loc}}$ in $\R^n$ such that $H$ is tangent to $S$ at $x_H$ for every $H\in\mathcal{H}$?\footnote{ We say that $S$ is a convex hypersurface $S$ of class $C^{1,1}_{\textrm{loc}}$ provided that $S=\partial W$ for some convex body (possibly unbounded) and $S$ is a $C^1$ submanifold of $\R^n$ such that the outer unit normal $n_{S}(x), x\in S$, is a locally Lipschitz mapping (equivalently, $S$ can be regarded locally,  in appropriate coordinates, as the graph of a $C^{1,1}_{\textrm{loc}}$ function).}
An equivalent reformulation of this problem is the following: given $C\subset\R^n$ and $N: E \to \mathbb{S}^{n-1}$, what conditions are necessary and sufficient to ensure the existence of a (not necessarily bounded) convex body $W$ of class $C^{1,1}_{\textrm{loc}}$ such that $E \subseteq \partial W$ and the outer unit normal $n_{S}(x)$ to $S:=\partial W$ at $x$ coincides with $N(x)$ for every $x\in E$?\footnote{We say that $W$ is a convex body of class $C^{1, 1}_{\textrm{loc}}$ if $W$ is a closed convex subset of $\R^n$ with nonempty interior such that its boundary $\partial W$ is a $C^{1,1}_{\textrm{loc}}$ hypersurface of $\R^n$.} Our solution to this problem is as follows.

\begin{thm}\label{geometric corollary}
Let $E$ be an arbitrary nonempty subset of $\R^n$, $N:E\to\mathbb{S}^{n-1}$ a locally Lipschitz mapping, $X$ a linear subspace of $\R^n$, and $P:\R^n\to X$ the orthogonal projection. Then there exists a convex hypersurface $S$ of class $C^{1,1}_{\textrm{loc}}$ such that $E\subset S$, $N(x)=n_{S}(x)$ for all $x\in E$, and 
$X=\textrm{span}\{ n_{S}(x)-n_{S}(y) : x, y\in S\}$,
if and only if the following conditions are satisfied. 
\begin{itemize}
\item[$(i)$] $Y:=\textrm{span}\{ N(y)-N(z) : y, z\in E\}\subseteq X$.
\item[$(ii)$] If $\ell:=\dim Y< d:= \dim X$, then there exist points $p_1, \ldots, p_{d-\ell} \in \R^n \setminus E$, vectors $w_1, \ldots, w_{d-\ell} \in \mathbb{S}^{n-1}$, and a sequence of numbers $A_k\geq 2$, $k\in\N$, such that, 
denoting: $E^{*}:=E\cup\{p_1, ..., p_{d-\ell}\}$; $\xi_y:=N(y)$ for $y\in E$; $\xi_y=w_i$ for $y=p_i$,
$i=1, ..., d-\ell$,
we have that
\begin{equation}\label{completing the span of derivatives 4}
\textrm{span}\{\xi_y -\xi_z: y, z\in E^{*}\}=X;
\end{equation}
and
\begin{equation}\label{every tangent function lies above all tangent planes Geometrical Corollary}
\langle \xi_z, x-z\rangle
\leq \langle \xi_y, x-y\rangle +\frac{A_k}{2}|Px-Py|^2 	
\end{equation}
for all $z\in E^{*}$, $y\in E^{*}\cap P^{-1}(B_X(0, k))$, $x\in P^{-1}(B_X(0, 4k))$.
\item[$(iii)$] If $\ell=d$, the preceding condition holds with $E$ in place of $E^{*}$.
\end{itemize}
\end{thm}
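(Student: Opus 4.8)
The plan is to deduce Theorem~\ref{geometric corollary} from Theorem~\ref{First variant of MainTheorem with P} by attaching to the geometric data $(E,N,X)$ the $1$-jet $(f,G):=(0,N)$ on $E$ (the zero function paired with the prescribed unit normal field), and by realising the sought hypersurface as a level set of a convex $C^{1,1}_{\textrm{loc}}$ function. The first observation is an algebraic one: for the jet $(0,N)$ we have $t_{y}\equiv0$, so on specialising Theorem~\ref{First variant of MainTheorem with P} to $t_{y}\equiv0$ (hence $\beta_{1}=\dots=\beta_{d-\ell}=0$) and to $w_{j}\in\mathbb{S}^{n-1}$, inequality \eqref{every tangent function lies above all tangent planes MainthmwithP 2} collapses exactly to \eqref{every tangent function lies above all tangent planes Geometrical Corollary} and \eqref{completing the span of derivatives 2} becomes \eqref{completing the span of derivatives 4}. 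Hence conditions $(i)$--$(iii)$ of Theorem~\ref{geometric corollary} hold if and only if there exists a convex $F\in C^{1,1}_{\textrm{loc}}(\R^{n})$ with $F_{|E}=0$, $(\nabla F)_{|E}=N$ and $X_{F}=X$, and it suffices to show that the existence of such an $F$ is equivalent to the existence of the hypersurface $S$.

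Given such an $F$, put $W:=\{F\le0\}$ and $S:=\{F=0\}$. First I would check that $\nabla F$ does not vanish on $S$: if $\nabla F(x_{0})=0$ with $F(x_{0})=0$ then $x_{0}$ minimises $F$, so $\min F=0$ and every $y\in E$ (for which $F(y)=0$) also minimises $F$, forcing $\nabla F(y)=0$, contrary to $|N(y)|=1$. Then $\{F<0\}$ is nonempty (slide from any $y\in E$ in the direction $-N(y)$) and open, and $W\neq\R^{n}$, so $W$ is a convex body with $\textrm{int}\,W=\{F<0\}$ and $\partial W=S$; and since $F\in C^{1,1}_{\textrm{loc}}$ has nonvanishing gradient along $S$, the implicit function theorem, together with the local Lipschitzness of $\nabla F$, shows that $S$ is a convex hypersurface of class $C^{1,1}_{\textrm{loc}}$ with outer unit normal $n_{S}(x)=\nabla F(x)/|\nabla F(x)|$, and in particular $n_{S}(y)=N(y)$ on $E$ since $|N(y)|=1$. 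It remains to check $\textrm{span}\{n_{S}(x)-n_{S}(y):x,y\in S\}=X$. The inclusion ``$\supseteq$'' is immediate: the added points $p_{j}$ (if any) lie on $S$, where $F=\beta_{j}=0$, and at each point of $E^{*}$ the normal $n_{S}$ equals the corresponding unit vector $\xi_{y}$, so by \eqref{completing the span of derivatives 4} the left-hand span contains $\textrm{span}\{\xi_{y}-\xi_{z}:y,z\in E^{*}\}=X$. The reverse inclusion is the delicate step, and the one where I expect the main work: writing $F=c_{F}\circ P+\langle v_{F},\cdot\rangle$ as in Theorem~\ref{rigid global behavior of convex functions}, with $c_{F}$ essentially coercive on $X=X_{F}$ and $v_{F}\in X^{\perp}$, the range of $\nabla F$ and hence the Gauss image of $\partial W$ is governed by the essentially coercive datum $c_{F}$ on $X$, and I would analyse this Gauss image through the level sets of $c_{F}$ to conclude ``$\subseteq$''; the subtlety is that the normalisation $\nabla F\mapsto\nabla F/|\nabla F|$ need not preserve the affine structure of the range of $\nabla F$, so this must be done with care.

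Conversely, let $S=\partial W$ be a convex $C^{1,1}_{\textrm{loc}}$ hypersurface with $E\subseteq S$, $n_{S}|_{E}=N$ and $\textrm{span}\{n_{S}(x)-n_{S}(y):x,y\in S\}=X$. I would produce the required function from the signed distance $d_{W}$ to $W$ (negative inside, positive outside), which is convex because $W$ is: it vanishes on $S\supseteq E$, satisfies $\nabla d_{W}=n_{S}$ on $S$, so $(\nabla d_{W})_{|E}=N$, and its gradient takes values exactly in the Gauss image of $S$ (with subgradients along the medial axis lying in convex hulls of it), whence $X_{d_{W}}=\textrm{span}\{n_{S}(x)-n_{S}(y):x,y\in S\}=X$. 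As $S$ is a $C^{1,1}_{\textrm{loc}}$ hypersurface, $d_{W}$ is already $C^{1,1}_{\textrm{loc}}$ in a neighbourhood of $\partial W$, so a routine convexity-preserving smoothing that modifies $d_{W}$ only away from $\partial W$ yields a convex $F\in C^{1,1}_{\textrm{loc}}(\R^{n})$ with $F_{|E}=0$, $(\nabla F)_{|E}=N$ and $X_{F}=X$. Feeding $F$ into the ``only if'' half of Theorem~\ref{First variant of MainTheorem with P}, applied to the jet $(0,N)$ and the subspace $X$, yields conditions $(i)$--$(iii)$ of that theorem, which by the first paragraph are precisely those of Theorem~\ref{geometric corollary}; in this direction the only genuine technical point is the smoothing of $d_{W}$ away from $S$.
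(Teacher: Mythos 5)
Your sufficiency argument follows the paper's route (jet $(0, N)$ on $E^{*}$, Theorem~\ref{First variant of MainTheorem with P}, $S = F^{-1}(0)$), and your ``$\supseteq$'' inclusion is correct because the $p_j$ do lie on $S$ once one sets $\beta_j = 0$. But the step you flag as delicate is a genuine gap, not a technicality, and you supply only a plan, not a proof. Your worry about normalisation is well founded: writing $\nabla F(x) = \nabla c_F(Px) + v_F$ with $v_F = Q_X(\xi_{y_0}) \in X^{\perp}$ the common $X^{\perp}$-component of all the $\xi_y$ (which condition \eqref{every tangent function lies above all tangent planes Geometrical Corollary} forces to be independent of $y$), the cylinder subspace of $W = F^{-1}((-\infty, 0])$ from Lemma~\ref{Line free decomposition of body} equals $X \oplus \textrm{span}(v_F)$, and Proposition~\ref{span of differences of normals proposition}(3) identifies precisely this subspace with $\textrm{span}\{n_S(x) - n_S(y) : x, y \in S\}$. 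So the Gauss span strictly exceeds $X$ unless $v_F = 0$; one must show $\xi_y \in X$ (equivalently $v_F = 0$), which does not follow merely from $X_F = X$. The paper's own proof asserts the span equality $\textrm{span}\{n_S(x)-n_S(y) : x,y\in S\} = \textrm{span}\{\nabla F(x)-\nabla F(y):x,y\in\R^n\}$ without addressing this, so your caution is warranted, but your proposal leaves the crucial step unresolved.

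In the necessity direction you take a genuinely different route. The paper works with $\varphi_W = \varphi_{W \cap X} \circ P$ only on a neighbourhood of $\partial W$, using compactness of $S \cap X \cap B(0, 4k)$ to produce the constants $A_k$ of \eqref{every tangent function lies above all tangent planes Geometrical Corollary} directly, and never constructs a globally $C^{1,1}_{\textrm{loc}}$ convex extension --- in particular it does not invoke the necessity half of Theorem~\ref{First variant of MainTheorem with P} at all. You propose instead to smooth $\varphi_W$ away from $\partial W$ into a global convex $F \in C^{1,1}_{\textrm{loc}}(\R^n)$ and then apply Theorem~\ref{First variant of MainTheorem with P}. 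That is a legitimate idea, but the ``routine'' smoothing must preserve not only convexity and the jet on $E$ but also the index subspace $X_F = X$, which is a property of $F$ at infinity that naive modifications away from $S$ can change (for instance, a supremum with a strictly coercive convex function would make $X_F = \R^n$). One would need to perform the smoothing on $W \cap X$ and then pull back by $P$ so that $F$ stays of the form $c \circ P$; this point is not addressed in the proposal and is where your approach would actually cost effort that the paper's local argument avoids.
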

Before showing this result, let us gather some facts concerning the geometry of unbounded convex bodies that will help us understand its statement and proof. We say that a convex body is {\em line-free} if it does not contain any line (however, it may contain half-lines).
\begin{lem}\label{Line free decomposition of body}
For every convex body $W\subset\R^n$ there exists a linear subspace $Y$ of $\R^n$ such that 
$$
W=(W\cap Y)\times Y^{\perp},
$$
where the convex body $W\cap Y$ is line-free (and possibly unbounded). Furthermore, $Y^{\perp}$ is the set of vectors parallel to lines contained in $W$. Consequently, if $S:=\partial W$ and we denote $P:\R^n\to Y$ the orthogonal projection, we also have: 
\begin{enumerate}
\item $S=(S\cap Y)\times Y^{\perp}$, and
\item $d(x, S)=d\left( P(x), S\cap Y\right)$ for every $x\in\R^n$.
\end{enumerate}
\end{lem}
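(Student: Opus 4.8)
The plan is to identify $Y^\perp$ with the \emph{lineality space} of $W$ and then read off everything else from a direct orthogonal–product computation. Concretely, I would set
$$
L:=\{v\in\R^n : x+tv\in W \text{ for all } x\in W \text{ and all } t\in\R\},
$$
and first prove that $L$ coincides, up to the zero vector, with the set of directions of lines contained in $W$, and that it is a linear subspace. The one nontrivial point is a recession-type argument: if $W$ contains a line $\{a+tv:t\in\R\}$ with $a\in W$, then for any $x\in W$, any $t\in\R$ and any $\lambda\in(0,1)$ the point $(1-\lambda)x+\lambda\big(a+\tfrac{t}{\lambda}v\big)=(1-\lambda)x+\lambda a+tv$ lies in $W$ by convexity, and letting $\lambda\to0^+$ and using that $W$ is closed gives $x+tv\in W$; hence $v\in L$. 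Closure of $L$ under scalar multiples is immediate, and if $v,w\in L$ then $x+t(v+w)=(x+tv)+tw\in W$ for all $x\in W$, $t\in\R$, so $L$ is a subspace. Taking $Y:=L^\perp$ (so that $Y^\perp=L$) then yields the ``furthermore'' clause for free.

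Next I would establish the product decomposition. With $P=P_Y$ and $Q=I-P=P_{Y^\perp}$, for any $x\in W$ we have $Q(x)\in L$ and $-Q(x)\in L$, hence $P(x)=x-Q(x)\in W\cap Y$ and $x=P(x)+Q(x)$; conversely $(W\cap Y)+Y^\perp\subseteq W$ by the very definition of $L$. Thus $W=(W\cap Y)+Y^\perp$, i.e.\ $W=(W\cap Y)\times Y^\perp$ under the identification $\R^n=Y\times Y^\perp$. That $W\cap Y$ is line-free follows since any line inside $W\cap Y$ has direction in $Y\cap L=\{0\}$; that it is a convex body relative to $Y$ follows from the elementary identity $\textrm{int}(A\times Y^\perp)=\textrm{int}_Y(A)\times Y^\perp$ (interior in $\R^n$ on the left, in $Y$ on the right), which forces $\textrm{int}_Y(W\cap Y)\neq\emptyset$ from $\textrm{int}(W)\neq\emptyset$ — closedness and convexity of $W\cap Y$ being clear.

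The consequences for $S=\partial W$ are then bookkeeping. Using $W=(W\cap Y)\times Y^\perp$ together with $\textrm{int}(W)=\textrm{int}_Y(W\cap Y)\times Y^\perp$ gives
$$
S=W\setminus\textrm{int}(W)=\big[(W\cap Y)\setminus\textrm{int}_Y(W\cap Y)\big]\times Y^\perp,
$$
which is $(S\cap Y)\times Y^\perp$ once one notes that intersecting the right-hand side with $Y\times\{0\}$ identifies $S\cap Y$ with the boundary of $W\cap Y$ relative to $Y$; this is (1). For (2), writing $x=P(x)+Q(x)$ and any $s\in S$ as $s=s'+s''$ with $s'\in S\cap Y$ and $s''\in Y^\perp$ arbitrary, one has $|x-s|^2=|P(x)-s'|^2+|Q(x)-s''|^2$; taking the infimum over $s\in S$ one may choose $s''=Q(x)$ to annihilate the second summand, leaving $d(x,S)^2=d(P(x),S\cap Y)^2$, and then take square roots. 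The degenerate case $W=\R^n$ (where $L=\R^n$, $Y=\{0\}$, $S=\emptyset$) is vacuous.

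I expect the only genuine obstacle to be Step~1: verifying rigorously that a line contained in $W$ forces translation-invariance of $W$ in that direction, which is exactly what makes $L$ a bona fide linear subspace. This is the standard recession-cone fact, and once it is in hand the remaining steps are routine manipulations with the orthogonal splitting $\R^n=Y\times Y^\perp$.
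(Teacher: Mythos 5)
Your proof is correct and follows the same underlying approach as the paper: the decomposition along the lineality space $L=Y^\perp$ is exactly what the paper invokes by citing \cite[Lemma 1.4.2]{Schneider}, and properties (1) and (2) are treated in both as routine consequences of the cylindrical structure. The only difference is that you prove the recession-cone and product-decomposition facts from scratch where the paper outsources them to Schneider's book; the bookkeeping for the boundary and the Pythagorean splitting of the distance matches what the paper calls ``immediate consequences.''
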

\begin{proof}
For the first part, see \cite[Lemma 1.4.2]{Schneider} for instance. Then properties $(1)$ and $(2)$ are immediate consequences of the cylindrical structure of $W$.
\end{proof}
The following result must be known, but I have been unable to find a proof in the literature. 
\begin{prop}\label{span of differences of normals proposition}
Let $W\subset \R^n$ be a convex body such that $S:=\partial W$ is a hypersurface of class $C^{1,1}_{\textrm{loc}}$. 
\begin{enumerate}
\item If $W$ is bounded then $\textrm{span}\{ n_S(x) : x\in S\}= \R^n=\textrm{span}\{n_S(x)-n_S(y) : x, y\in S\}$.
\item If $W$ is a halfspace then, with the notation of Lemma \ref{Line free decomposition of body}, $Y$ is $1$-dimensional, $Y^{\perp}$ is a hyperplane parallel to $S$, and $\textrm{span}\{ n_S(x) : x\in S\}=Y$, but $\textrm{span}\{n_S(x)-n_S(y) : x, y\in S\}=\{0\}$.
\item If $W$ is unbounded and is not a halfspace then, with the notation of Lemma \ref{Line free decomposition of body}, we have
\begin{equation}\label{span of differences of normals equals span of normals}
\textrm{span}\{ n_S(x) : x\in S\}= Y=\textrm{span}\{n_S(x)-n_S(y) : x, y\in S\}
 =
\textrm{span}\{ n_S(x) : x\in S\cap Y\}.
\end{equation}
\end{enumerate}
\end{prop}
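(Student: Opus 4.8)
The plan is to reduce everything to the line-free case via Lemma~\ref{Line free decomposition of body} and then to identify the Gauss image of a line-free $C^1$ convex body with the radial projection of its barrier cone. Parts $(1)$ and $(2)$ I would dispose of directly. If $W$ is bounded, then $S=\partial W$ is compact, so for every $u\in\mathbb{S}^{n-1}$ the functional $x\mapsto\langle u,x\rangle$ attains its maximum on $W$ at some point $x^{*}$; since $S$ is of class $C^1$, the only supporting hyperplane of $W$ at $x^{*}$ is $\{x:\langle u,x\rangle=\langle u,x^{*}\rangle\}$, hence $n_S(x^{*})=u$. Thus $n_S(S)=\mathbb{S}^{n-1}$, which spans $\R^n$; and since $u,-u\in n_S(S)$ for every $u$, we get $2u\in\textrm{span}\{n_S(x)-n_S(y):x,y\in S\}$ for all $u$, giving the second identity. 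If $W$ is a halfspace $\{x:\langle x,u\rangle\le c\}$, then $S$ is the affine hyperplane $\{\langle x,u\rangle=c\}$, $n_S\equiv u$, and the set of directions of lines contained in $W$ is exactly $u^{\perp}$; so in the notation of Lemma~\ref{Line free decomposition of body}, $Y^{\perp}=u^{\perp}$ is a hyperplane parallel to $S$, $Y=\R u$ is one-dimensional, $\textrm{span}\{n_S(x):x\in S\}=\R u=Y$, and $\textrm{span}\{n_S(x)-n_S(y):x,y\in S\}=\{0\}$.

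For part $(3)$ I would first write $W=(W\cap Y)\times Y^{\perp}$ as in Lemma~\ref{Line free decomposition of body} and set $C:=W\cap Y$, a line-free convex body in $Y$ with nonempty interior whose boundary $S':=\partial_Y C$ is of class $C^{1,1}_{\textrm{loc}}$; moreover $C$ cannot be a halfspace of $Y$, for otherwise $W$ would be a halfspace of $\R^n$, against the hypothesis. Since $S=S'\times Y^{\perp}$, the supporting hyperplane of $W$ at any point $(x',x'')\in S$ contains the subspace $Y^{\perp}$, so the outer normal lies in $Y$, and in fact $n_S\bigl((x',x'')\bigr)=n_{S'}(x')$. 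Consequently $\{n_S(x):x\in S\}=\{n_{S'}(x'):x'\in S'\}\subseteq Y$; this already yields the last equality in \eqref{span of differences of normals equals span of normals} together with the inclusions ``$\subseteq Y$''. It then remains to show that $\textrm{span}\{n_{S'}(x'):x'\in S'\}=Y$ and $\textrm{span}\{n_{S'}(x')-n_{S'}(y'):x',y'\in S'\}=Y$, where now $C$ is a line-free $C^1$ convex body in $\R^{d}$, $d=\dim Y\ge 1$, that is not a halfspace.

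The heart of the matter is to control the Gauss image of such a $C$. Let $\textrm{rec}(C)$ be its recession cone and $\textrm{bar}(C)=\{u\in\R^d:\sup_{x\in C}\langle u,x\rangle<\infty\}$ its barrier cone; these are mutually polar, so $\textrm{bar}(C)$ has nonempty interior exactly because $C$ is line-free (equivalently $\textrm{rec}(C)$ is pointed). For $u\in\textrm{int}\,\textrm{bar}(C)$ one has $\langle u,v\rangle<0$ for every $v\in\textrm{rec}(C)\setminus\{0\}$, so a maximizing sequence for $x\mapsto\langle u,x\rangle$ over the closed set $C$ stays bounded and the maximum is attained at some $x^{*}\in S'$; since $S'$ is $C^1$, this forces $n_{S'}(x^{*})=u/|u|$. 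Hence $n_{S'}(S')\supseteq U:=\{u/|u|:u\in\textrm{int}\,\textrm{bar}(C),\ u\neq 0\}$, a nonempty relatively open subset of $\mathbb{S}^{d-1}$. If $d=1$, then $C$, being line-free and not a halfspace, is a bounded interval, so $n_{S'}(S')=\{-1,+1\}$ and both spans equal $\R=Y$. If $d\ge 2$, no nonempty relatively open subset of $\mathbb{S}^{d-1}$ lies in an affine hyperplane (the restriction of $x\mapsto\langle a,x\rangle$ to the sphere is nonconstant on every open subset when $a\ne 0$); therefore $\textrm{span}\,U=\R^d$, and, fixing $u_0\in U$, also $\textrm{span}\{u-v:u,v\in U\}=\textrm{span}(U-u_0)=\R^d$, since $U$ is not contained in $u_0+H$ for any hyperplane $H$. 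This gives $\textrm{span}\{n_{S'}(x')\}=Y=\textrm{span}\{n_{S'}(x')-n_{S'}(y')\}$ and completes the proof.

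The step I expect to be the main obstacle is the passage linking interiority in the barrier cone to both the full-dimensionality of $\textrm{bar}(C)$ and the attainment of the support function on $S'$: one must verify that $u\in\textrm{int}\,\textrm{bar}(C)$ forces the strict inequality $\langle u,v\rangle<0$ on $\textrm{rec}(C)\setminus\{0\}$, which is precisely what prevents a maximizing sequence from escaping to infinity and, dually, makes line-freeness of $C$ equivalent to non-degeneracy of $\textrm{bar}(C)$. The remaining ingredients --- the cylindrical reduction via Lemma~\ref{Line free decomposition of body} and the elementary facts that an open cap of a sphere spans and has spanning difference set --- are routine.
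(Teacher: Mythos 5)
Your proof is correct, and part~$(3)$ takes a genuinely different route from the one in the paper. Parts $(1)$ and $(2)$ are essentially the same as the paper's (the paper's argument for the second identity in $(1)$ uses the shifted sphere $-n_S(y_0)+\mathbb{S}^{n-1}$ rather than the pair $\{u,-u\}$, but these are interchangeable). For part $(3)$, after the common cylindrical reduction to the line-free body $C=W\cap Y$, the paper does not use the barrier cone. Instead it first argues by connectedness and non-degeneracy that $\mathrm{span}\{n_{S\cap Y}(x):x\in S\cap Y\}=Y$, then picks points $x_1,\dots,x_d\in S\cap Y$ whose normals form a basis, forms the simplicial cone $\bigcap_j H_j^{-}$ cut out by the corresponding supporting halfspaces, and shows by a truncated-cone compactness argument that every unit vector in the relative interior of $\mathrm{cone}\{n(x_j)\}$ is attained as an outer normal; the final step again uses that an open cap of $\mathbb{S}^{d-1}$ spans and has spanning difference set. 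Your version replaces this explicit construction with the standard duality $\mathrm{rec}(C)$ pointed $\iff\mathrm{int}\,\mathrm{bar}(C)\neq\emptyset$ and the observation that for $u\in\mathrm{int}\,\mathrm{bar}(C)$ the support function is attained (because a divergent maximizing sequence would produce a recession direction $v$ with $\langle u,v\rangle=0$, contradicting strictness in the interior). Your approach is more conceptual and immediately exhibits an \emph{open} set of attained normals without first choosing a basis; the paper's approach is more self-contained, avoiding reliance on the barrier/recession cone polarity and making the openness of the attained cap visible by an explicit compactness argument. The technical point you flag -- strictness of $\langle u,v\rangle<0$ on $\mathrm{rec}(C)\setminus\{0\}$ for $u$ interior to $\mathrm{bar}(C)$, and the equality of interiors $\mathrm{int}\,\mathrm{bar}(C)=\mathrm{int}\,\mathrm{rec}(C)^{\circ}$ despite $\mathrm{bar}(C)$ not being closed in general -- does hold, so there is no gap.
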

\begin{proof}
$(1)$ If $W$ is bounded then, for each $u\in\mathbb{S}^{n-1}$, $\sup_{x\in W}\langle x, u\rangle$ is attained at some $x_u\in \partial W$, and this means that $u=n_S(x_u)$. Hence $\textrm{span}\{ n_S(x) : x\in S\}= \R^n$. Also, for any $y_0\in S$, $\textrm{span}\{ n_S(x)-n_S(y): x, y\in\R^n\}$ contains the sphere of center $-n_S(y_0)$ and radius $1$, and therefore must coincide with $\R^n$.

\noindent $(2)$ is obvious.

\noindent $(3)$ If $W$ is unbounded, according to Lemma \ref{Line free decomposition of body} let us write $W=(W\cap Y)\times Y^{\perp}$, where $W\cap Y$ is line-free (and $Y$ may be equal to $\R^n$). From the cylindrical structure of $W$ we see that 
\begin{equation}\label{normal restricted}
n_{S\cap Y}(P(x))=n_S(x) \textrm{ for every } x\in S,
\end{equation} 
and $$\textrm{span}\{n_S(x): x\in S\}=\textrm{span}\{n_S(x) : x\in S\cap Y\}=
\textrm{span}\{n_{S\cap Y}(x) : x\in S\cap Y\}\subseteq Y.$$
Let us now distinguish some cases depending on the dimension of $Y$.
If $\textrm{dim} Y=0$ then $W=\R^n$, $S=\emptyset$, and there is nothing to say. If $\textrm{dim} Y=1$ then $W$ is either a halfspace (a case already dealt with) or a {\em slab} (the intersection of two parallel halfspaces perpendicular to $Y$ and facing opposite directions). In the latter case it is clear that \eqref{span of differences of normals equals span of normals} is true. 

So we are left with the case that $\textrm{dim}Y\geq 2$. In this case $S$ is connected, and if $\textrm{span}\{n_S(x) : x\in S\cap Y\}$ were strictly contained in $Y$ then $W\cap Y$ would be contained in a proper subspace of $Y$ and therefore $W=(W\cap Y)\times Y^{\perp}$ would have empty interior, which is absurd. Thus we have
\begin{equation}\label{normals to cylinder}
\textrm{span}\{n_S(x): x\in S\}=\textrm{span}\{n_S(x) : x\in S\cap Y\}=\textrm{span}\{n_{S\cap Y}(x) : x\in S\cap Y\}= Y.
\end{equation}
For notational convenience, let us first assume that $W$ is line-free, that is, $\R^n=Y=\textrm{span}\{ n_S(x) : x\in S\}$, and check that $\textrm{span}\{n_S(x)-n_S(y) : x, y\in S\}=\R^n$ too. Let us choose points $x_1, ..., x_n\in S$ such that $\{n_S(x_1), ..., n_S(x_n)\}$ is a basis of $\R^n$.
\begin{claim}
The set $\Lambda:=\{\sum_{j=1}^{n} \lambda_j n(x_j) :\lambda_j>0, j=1, ..., n\}\cap\mathbb{S}^{n-1}$ is contained in $\{n_S(x): x\in S\}$.
\end{claim}
\begin{proof}
Let us denote $H_{j}^{-}=\{x\in\R^n : \langle x, n_S(x_j)\rangle \leq \langle x_j, n_S(x_j)\rangle\}$ and $H_j=\{x\in\R^n : \langle x, n_S(x_j)\rangle = \langle x_j, n_S(x_j)\rangle\}=\partial H_{j}^{-1}$ for $j=1, ..., n$. We have $W\subseteq C:=\bigcap_{j=1}^{n}H_{j}^{-}$, and since $\{n_S(x_1), ..., n_S(x_n)\}$ is a basis of $\R^n$ the hyperplanes $H_j$, $j=1, ..., n$, must intersect at a unique point $p_0$, which is the vertex of the pointed cone $C$. Given $\mu_1, ..., \mu_n>0$, we set $v :=\sum_{j=1}^{n}\mu_j n_S(x_j)$, $u:=v/|v|$, and observe that the hyperplanes $\{x: \langle x, u\rangle =r\}$ intersect $C$ transversely at least for all $r\leq \langle x_1, u\rangle$. Then, for $r\leq \langle x_1, u\rangle$, the truncated cone $C_{u, r}:=\{x\in C : r\leq \langle x, u\rangle\}$ is nonempty and compact, hence so is $K_{u, r}:=W\cap C_{u, r}$, and therefore $\sup_{x\in K_{u,r}}\langle x, u\rangle$ is attained at some $x_{u,r}\in K_{u,r}$ But since $W\subseteq C$ and $\langle x, u\rangle<r\leq \langle x_1, u\rangle$ for all $x\in W\setminus C_{u, r}$, we have that $\sup_{x\in K_{u,r}}\langle x, u\rangle=\sup_{x\in W}\langle x, u\rangle$ is attained at $x_{u,r}$, and this implies that $x_{u, r}\in\partial W=S$ and $n_{S}(x_{u, r})=u$.
\end{proof}
Now, since $\Lambda$ is open in the unit sphere $\mathbb{S}^{n-1}$, for any $y_0\in S$ we have that $-n(y_0)+\Lambda$ is open in the sphere of center $-n(y_0)$ and radius $1$, and (because any nonempty relatively open subset of a sphere spans all of $\R^n$) it follows that $\textrm{span}\{ n_S(x)-n_S(y_0) : x\in S\}=\R^n$ , which yields $\textrm{span}\{ n_S(x)-n_S(y) : x, y\in S\}=\R^n$.

Let us finally consider the case that $Y\neq\R^n$. By applying what we have just established to the convex body $W\cap Y$ (with boundary $S\cap Y)$ of the space $Y$, we see that $\textrm{span}\{ n_{S\cap Y}(x) : x\in S\cap Y\}= Y=\textrm{span}\{n_{S\cap Y}(x)-n_{S\cap Y}(y) : x, y\in S\cap Y\}$, and by combining this with \eqref{normal restricted} and \eqref{normals to cylinder} we conclude the proof of $(3)$.
\end{proof}

\begin{proof}[{\bf Proof of Theorem \ref{geometric corollary}}]
Let us assume that conditions $(i)-(iii)$ are satisfied and, with the help of Theorem \ref{First variant of MainTheorem with P}, let us construct a convex hypersurface $S$ as required. Define $f$ and $G$ on $E^{*}$ by $f(y)=0$ and $G(y)=\xi_y$. Then\eqref{completing the span of derivatives 4} implies \eqref{completing the span of derivatives 2}, and \eqref{every tangent function lies above all tangent planes Geometrical Corollary} implies \eqref{every tangent function lies above all tangent planes MainthmwithP 2}, so we can apply Theorem \ref{First variant of MainTheorem with P} to obtain a convex function $F\in C^{1,1}_{\textrm{loc}}(\R^n)$ such that $(F, \nabla F)=(f, G)$ on $E^{*}$ and $\textrm{span}\{ \nabla F(x)-\nabla F(y) : x, y\in \R^n\}=X$. Note that $F$ is not constant because $\nabla F(y)=\xi_y\neq 0$ for any $y\in E$, where we have $F(y)=0$. Since a convex function has vanishing gradients exactly at the points where a global minimum is attained, it is clear that for every $x\in F^{-1}(0)$ we have $\nabla F(x)\neq 0$. Therefore
$$
W:=F^{-1}(-\infty, 0]
$$ 
defines a convex body of class $C^{1,1}_{\textrm{loc}}$, and its boundary $$S:=\partial W=F^{-1}(0)$$ is a convex hypersurface of class $C^{1,1}_{\textrm{loc}}$. It is obvious that $E\subseteq S$, and since $\nabla F(x)$ points outside $W$ and is perpendicular to $S$ at $x$ for every $x\in S$, and $\nabla F(y)=\xi_y$ for all $y\in E^{*}$, we have that $N=n_{S}$ on $E$ and $\textrm{span}\{ n_{S}(x)-n_{S}(y) : x, y\in S\}=\textrm{span}\{\nabla F(x)-\nabla F(y) : x, y\in \R^n\}=X$.

Conversely, let us assume that there is a convex $C^{1,1}_{\textrm{loc}}$ hypersurface $S=\partial W$ with $X=\textrm{span}\{ n_{S}(x)-n_{S}(y) : x, y\in S\}$, $E\subset S$ and $n_S=N$ on $E$, and let us see that conditions $(i)-(iii)$ of the statement are met. According to Lemma \ref{Line free decomposition of body} and Proposition \ref{span of differences of normals proposition}, we may write $W=(W\cap Z)\times Z^{\perp}$, where $W\cap Z$ is line-free, and we have that $X=Z$ unless $W$ is a halfspace. If $W$ is a halfspace then $X=\{0\}=Y$, $N(y)=N(z)$ for all $y, z\in E$, and conditions $(i)-(iii)$ of the statement are trivially satisfied. Thus we may assume $X=Z$ (and in particular the $P$'s in the statements of Theorem \ref{geometric corollary} and Lemma \ref{Line free decomposition of body} coincide).

For a convex body $V$, let $\varphi_V:\R^n\to\R$ denote the {\em signed distance} to $\partial V$, that is, 
$$
\varphi_V(x)=
\begin{cases}
d(x, \partial V) & \textrm{ if } x\notin V \\
-d(x, \partial V) & \textrm{ if } x\in V.
\end{cases}
$$
By Lemma \ref{Line free decomposition of body} we have $\varphi_W(x)=\varphi_{W\cap X}(P(x))$ for all $x\in\R^n$.
It is well known that if $\partial V$ is a convex hypersurface of class $C^{1,1}_{\textrm{loc}}$ then the function $\varphi_V$ is convex and there exists an open neighborhood $\Omega$ of $\partial V$ such that ${\varphi_{V}}_{|_{\Omega}}$ is of class $C^{1,1}_{\textrm{loc}}(\Omega)$, and $\nabla \varphi_V(x)=n_{\partial V}(x)$ for every $x\in \partial V$; see \cite[Theorems 5.4 and 5.7]{DelfourZolesio}. By applying this result to $V=W\cap X$, we obtain an open neighborhood $U_0$ of $S\cap X$ in $X$ such that $\varphi_{W\cap X}\in C^{1,1}_{\textrm{loc}}(U_0)$, and hence $\varphi_{W}\in C^{1,1}_{\textrm{loc}}(U)$, where $U:=P^{-1}(U_0)$. Now, for every $k\in\N$, since $S\cap X\cap B(0, 4k)$ is compact, there exists numbers $L_k>0$ and $\delta_k\in (0, 1]$ such that $B_X (x, \delta_k)\subseteq U_0$ for all $x\in S\cap X\cap B(0, 4k)$, and $|\nabla\varphi_{W\cap X}(x)-\nabla\varphi_{W\cap X}(y)|\leq L_k|x-y|$ for all $y\in S\cap X\cap B(0, 4k)$ and $x\in B_X(y, \delta_k)$, which (bearing in mind that $\varphi_{W\cap X}^{-1}(0)=S\cap X$, $\nabla\varphi_{S\cap X}=n_{S\cap X}$ on $S\cap X$, and $\varphi_{W\cap X}$ is convex) implies that
$$
\langle n_{S\cap X}(z), x-z\rangle\leq \varphi_{S\cap X}(x) \leq\langle n_{S\cap X}(y), x-y\rangle +L_k |x-y|^{2}
$$
for all $z\in S\cap X$, $y\in S\cap X\cap B(0, 4k)$ and $x\in B_X(y, \delta_k)$. On the other hand, if $y\in S\cap X\cap B(0, 4k)$ and $x\in B_{X}(0, 4k)\setminus  B_{X}(y, \delta_k)$ then, by setting 
$$
A_k:= \frac{2}{{\delta_{k}}^{2}}\left( 8k+L_k +\sup_{x\in B_{X}(0, 4k)}|\varphi_{S\cap X}(x)|\}\right)
$$
we have that
$$
\langle n_{S\cap X}(z), x-z\rangle\leq \varphi_{S\cap X}(x) \leq\langle n_{S\cap X}(y), x-y\rangle +\frac{A_k}{2} |x-y|^{2}.
$$
Thus, in either case, the above inequality holds for every $z\in S\cap X$, $y\in S\cap X\cap B(0, 4k)$, $x\in B_{X}(0,4k)$, and since $\varphi_W(x)=\varphi_{W\cap X}(P(x))$ and $n_S(x)=n_{S\cap X}(P(x))$, we deduce that
\begin{equation}\label{every tangent function lies above all tangent planes normal version proof of Geometrical Corollary}
\langle n_{S}(z), x-z\rangle \leq \langle n_{S}(y), x-y\rangle +\frac{A_k}{2} |P(x-y)|^{2}
\end{equation}
for every $z\in S$, $y\in S\cap P^{-1}(B_X(0, 4k))$, $x\in P^{-1}(B_{X}(0,4k))$. Clearly $(i)$ is always satisfied as $E\subset S$, and if $\ell:=\textrm{dim} Y =d:=\textrm{dim} X$ we are done. 

If $\ell<d$ then  $Y=\textrm{span}\{n_S(x)-n_S(y) : x, y\in E\}$ is strictly contained in $X$, and we can find points $x_0,x_1, \ldots, x_\ell \in E$ such that $Y =\textrm{span} \lbrace n_S(x_j)-n_S(x_0) \: : \: j=1, \ldots, \ell \rbrace.$ Then, by mimicking the beginning of the proof of (ii) in the necessity part of Theorem \ref{MainTheorem with P} below, we may obtain points $p_1, \ldots ,p_{d-\ell} \in \R^n$ such that the set $ \lbrace n_S(p_j)- n_S(x_0) \rbrace_{j=1}^{d-\ell}$ is linearly independent and $X = Y \oplus \textrm{span}\lbrace n_S(p_j)- n_S(x_0) \: : \: j=1, \ldots ,d-\ell \rbrace$, hence
$
X=\textrm{span}\left\{u-w : u, w\in n_S(E^{*}) \right\},
$
where $E^{*}:=E\cup\{p_1, ..., p_{d-\ell}\}$. Thus, if we set $\xi_y:=N(y)$ for $y\in E$, and $\xi_y:=w_i := n_S(y)$ for $y=p_i$,
$i=1, ..., d-\ell$, we see that \eqref{completing the span of derivatives 4} is true, and from \eqref{every tangent function lies above all tangent planes normal version proof of Geometrical Corollary}
we conclude that \eqref{every tangent function lies above all tangent planes Geometrical Corollary} is also satisfied.
\end{proof}

\begin{rem}
{\em By using first the necessity part and then the proof of the sufficiency part of Theorem \ref{geometric corollary} with $E=S$, we deduce the not entirely obvious fact that for every convex hypersurface $S$ of class $C^{1,1}_{\textrm{loc}}$ in $\R^n$ (defined as in Footnote 3) there always exists a convex function $\varphi\in C^{1,1}_{\textrm{loc}}(\R^n)$ such that $\varphi^{-1}(0)=S$ and $\nabla\varphi(x)=n_S(x)$ for every $x\in S$.}
\end{rem}

\subsection{A new formula for (not necessarily convex) $C^{1,1}_{\textrm{loc}}$ extensions of $1$-jets}

A function $f:\R^n\to\R$ is of class $C^{1, 1}_{\textrm{loc}}$ if and only if there exists a coercive convex function $\psi:\R^n\to\R$ of class $C^{1,1}_{\textrm{loc}}$ such that the functions $f+\psi$ and $\psi-f$ are convex and coercive. As we did in \cite{AzagraLeGruyerMudarra} in the $C^{1,1}$ case, one can use this fact in combination with Theorem \ref{Corollary to C11 Whitney thm for coercive convex functions} to obtain explicit formulas for general (not necessarily convex) $C^{1,1}_{\textrm{loc}}$ extensions of jets.

More precisely, if we are given a $1$-jet $(f, G)$ on a set $E\subset\R^n$ and we can guess that for some convex function $\psi\in C^{1,1}_{\textrm{loc}}(\R^n)$ the jet $(f+\psi, G+\nabla\psi)$ will have a coercive $C^{1,1}_{\textrm{loc}}$ convex extension $\widetilde{F}$, then the $C^{1,1}_{\textrm{loc}}$ function $F=\widetilde{F}-\psi$ will extend the original jet $(f, G)$. Thus Theorem \ref{Corollary to C11 Whitney thm for coercive convex functions} for the case $X=\R^n$ has the following consequence.\footnote{Here we make the mild assumption that the set $E$ has at least one subset consisting of $n+1$ affinely independent points, so that we do not have to add new data in some special cases (at least if we choose an appropriate function $\psi$). Of course, a fully general, but also more complicated version of  Theorem \ref{Extension formula for not necessarily convex jets} follows from Theorem \ref{MainTheorem with P} too. We leave its statement to the reader's care.}

\begin{thm}\label{Extension formula for not necessarily convex jets}
Let $E\subset\R^n$ be such that there are points $x_0, x_1, ..., x_n\in E$ so that $\{x_1-x_0, ..., x_n-x_0\}$ is a basis of $\R^n$. Let $f:E\to\R$, $G:E\to\R^n$ be arbitrary functions. Then there exists a function $F\in C^{1,1}_{\textrm{loc}}(\R^n)$ such that
 $F_{|_E}=f$, $(\nabla F)_{|_E}=G$ if and only if there exist a convex function $\psi\in C^{1,1}_{\textrm{loc}}(\R^n)$ and, for $y\in E$, functions $\varphi_{y}:\R^n\to [0, \infty)$ of class $C^{1,1}_{\textrm{loc}}$ such that:
\begin{equation}\label{span of derivatives in general extension formula}
\textrm{span}\{G(y)+\nabla\psi(y)-G(z)-\nabla\psi(z) : y, z\in E\}=\R^n;
\end{equation}
\begin{equation}\label{phiy equals 0 at y for extension of not necessarily convex jets}
\varphi_{y}(y)=0, \nabla\varphi_{y}(y)=0;
\end{equation}
\begin{equation}\label{finite sup of Lip constants of phiy for extension of not necessarily convex jets}
\sup\left\{\frac{|\nabla\varphi_{y}(x)-\nabla\varphi_{y}(z)|}{|x-z|} \, : x, z\in B(0, R), x\neq z,  \, y\in E\cap B(0,R)\right\} <\infty
\end{equation}
for every $R>0$, and
\begin{equation}\label{every tangent function lies ... for extension of not necessarily convex jets}
f(z)+\psi(z)+ \langle G(z)+\nabla\psi(z), x-z\rangle
\leq
 f(y)+\psi(y)+\langle G(y)+\nabla\psi(y), x-y\rangle +\varphi_{y}(x)
\end{equation}
for every $y, z\in E$ and every $x\in \R^n$. 

Moreover, whenever these conditions are satisfied, for every number $a>0$ the formula
\begin{multline}\label{general extension formula}
F(x)= \\
\textrm{conv}\left(x\mapsto \inf_{y\in E}\left\{f(y)+\psi(y)+\langle G(y) +\nabla\psi(y), x-y\rangle+\varphi_{y}(x)+a|x-y|^2\right\}\right)-\psi(x)
\end{multline}
defines a $C^{1,1}_{\textrm{loc}}$ convex extension of the jet $(f, G)$ to $\R^n$.
\end{thm}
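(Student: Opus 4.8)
The plan is to bootstrap off Theorem \ref{Corollary to C11 Whitney thm for coercive convex functions}, using the decomposition of $C^{1,1}_{\textrm{loc}}$ functions recalled just before the statement: $F\in C^{1,1}_{\textrm{loc}}(\R^n)$ if and only if $F=\widetilde F-\psi$ with $\widetilde F,\psi$ coercive convex of class $C^{1,1}_{\textrm{loc}}$. So the whole proof is a translation between the jet $(f,G)$ and the shifted jet $(f+\psi,\,G+\nabla\psi)$.

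\emph{Sufficiency.} Given $\psi$ and $\{\varphi_y\}_{y\in E}$ as in the statement, I would apply Theorem \ref{Corollary to C11 Whitney thm for coercive convex functions} to the jet $(\widetilde f,\widetilde G):=(f+\psi,\,G+\nabla\psi)$ on $E$. Indeed, \eqref{span of derivatives in general extension formula} is precisely the coercivity hypothesis \eqref{essentially coercive data corollary} for this jet, and \eqref{phiy equals 0 at y for extension of not necessarily convex jets}, \eqref{finite sup of Lip constants of phiy for extension of not necessarily convex jets}, \eqref{every tangent function lies ... for extension of not necessarily convex jets} are exactly \eqref{phiy equals 0 at y}, \eqref{finite sup of Lip constants of phiy on balls corollary}, \eqref{every tangent function lies above all tangent planes corollary} for $(\widetilde f,\widetilde G)$ with the same family $\{\varphi_y\}$. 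Hence, for any $a>0$, formula \eqref{formula for Fa in thm2} yields an essentially coercive convex $\widetilde F\in C^{1,1}_{\textrm{loc}}(\R^n)$ with $\widetilde F|_E=\widetilde f$ and $\nabla\widetilde F|_E=\widetilde G$. Then $F:=\widetilde F-\psi\in C^{1,1}_{\textrm{loc}}(\R^n)$ satisfies $F|_E=f$ and $\nabla F|_E=G$, and unwinding \eqref{formula for Fa in thm2} for $(\widetilde f,\widetilde G)$ shows that $\widetilde F-\psi$ is verbatim the right-hand side of \eqref{general extension formula}.

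\emph{Necessity.} Suppose $F\in C^{1,1}_{\textrm{loc}}(\R^n)$ extends $(f,G)$. First, by the quoted characterization, pick a coercive convex $\psi_0\in C^{1,1}_{\textrm{loc}}(\R^n)$ with $F+\psi_0$ and $\psi_0-F$ convex and coercive. To also secure \eqref{span of derivatives in general extension formula} I would replace $\psi_0$ by $\psi:=\psi_0+\varepsilon|\cdot|^2$ for a suitable $\varepsilon>0$: this keeps $\psi$ coercive convex and $C^{1,1}_{\textrm{loc}}$, keeps $F+\psi$ and $\psi-F$ convex and coercive, and makes the vectors $\nabla(F+\psi)(x_j)-\nabla(F+\psi)(x_0)=\big(\nabla(F+\psi_0)(x_j)-\nabla(F+\psi_0)(x_0)\big)+2\varepsilon(x_j-x_0)$, $j=1,\dots,n$, linearly independent for all but finitely many $\varepsilon$ (their Gram determinant is a polynomial in $\varepsilon$ whose leading coefficient is a nonzero multiple of $\det[x_1-x_0,\dots,x_n-x_0]\neq0$). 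With such an $\varepsilon$ fixed and $\widetilde F:=F+\psi$, we have $\widetilde F|_E=f+\psi$, $\nabla\widetilde F|_E=G+\nabla\psi$, and \eqref{span of derivatives in general extension formula} holds. Finally, for $y\in E$ I would simply take
$$
\varphi_y(x):=\widetilde F(x)-\widetilde F(y)-\langle\nabla\widetilde F(y),x-y\rangle\ \ (\,\geq0\text{ by convexity of }\widetilde F\,),
$$
which is convex, of class $C^{1,1}_{\textrm{loc}}$, with $\varphi_y(y)=0$ and $\nabla\varphi_y(y)=0$, i.e. \eqref{phiy equals 0 at y for extension of not necessarily convex jets}; since $\nabla\varphi_y(x)-\nabla\varphi_y(z)=\nabla\widetilde F(x)-\nabla\widetilde F(z)$ is independent of $y$, condition \eqref{finite sup of Lip constants of phiy for extension of not necessarily convex jets} reduces to local Lipschitzness of $\nabla\widetilde F$; and for $y,z\in E$ and $x\in\R^n$ the right-hand side of \eqref{every tangent function lies ... for extension of not necessarily convex jets} equals $\widetilde F(x)$, which dominates its left-hand side $\widetilde F(z)+\langle\nabla\widetilde F(z),x-z\rangle$ by convexity of $\widetilde F$.

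\emph{Where the difficulty lies.} Essentially everything is a bookkeeping translation plus one application of Theorem \ref{Corollary to C11 Whitney thm for coercive convex functions}; the one step that is not automatic is choosing, in the necessity direction, a single $\psi$ that at once preserves the convexity and coercivity needed to invoke that theorem \emph{and} forces the gradients of $F+\psi$ along $E$ to affinely span $\R^n$. This is exactly the role of the hypothesis that $E$ contains $n+1$ affinely independent points, and it is handled by the perturbation $\psi_0\mapsto\psi_0+\varepsilon|\cdot|^2$; without such a hypothesis one would be forced to adjoin finitely many new data points at points of $\R^n\setminus E$ as in Theorem \ref{MainTheorem with P}, which is why the statement is restricted as it is.
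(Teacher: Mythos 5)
Your proof is correct, and in the sufficiency direction it coincides with the paper's: both apply Theorem~\ref{Corollary to C11 Whitney thm for coercive convex functions} to the shifted jet $(f+\psi,\,G+\nabla\psi)$ and subtract $\psi$ at the end, so the formula~\eqref{general extension formula} drops out verbatim.

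In the necessity direction you take a genuinely different (and cleaner) route to secure the span condition~\eqref{span of derivatives in general extension formula}. The paper builds $\psi$ explicitly as an infinite sum of piecewise-radial quadratic bumps $\psi_k$ (chosen so that $F+\psi$ is strongly convex), observes that $\nabla\psi(x)=\lambda(x)x$ with $\lambda>0$ off the origin, then (after translating so $x_0=0$) considers the rescalings $\psi_R(x)=\psi(Rx)$ and shows the determinant $\det\big(\tfrac{1}{R^2}(\nabla(F+\psi_R)(x_j)-\nabla F(0))\big)_{j=1}^{n}\to\det(\lambda_j x_j)_{j=1}^{n}\neq 0$ as $R\to\infty$, so some fixed large $R$ works. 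You instead invoke the characterization stated just before the theorem to get a single $\psi_0$ with $F+\psi_0$ convex and coercive, and then perturb by $\varepsilon|\cdot|^2$; the determinant $\det\big(a_j+2\varepsilon(x_j-x_0)\big)_{j=1}^{n}$ is a polynomial in $\varepsilon$ with nonzero leading coefficient $2^n\det(x_j-x_0)_{j=1}^{n}$, hence nonzero for all but finitely many $\varepsilon$, and for any such $\varepsilon>0$ the perturbation preserves convexity, coercivity and $C^{1,1}_{\textrm{loc}}$ regularity. (You wrote ``Gram determinant'' but your description is really of the ordinary determinant; either works, since for $n$ vectors in $\R^n$ both detect linear independence, but the polynomial degree is $n$ for the determinant and $2n$ for the Gram determinant.) Your perturbation argument buys a more transparent genericity statement at essentially no cost, while the paper's scaling argument is self-contained in that it does not lean on the ``$F=\widetilde F-\psi$'' characterization as a black box --- it in effect proves the half of it that is needed. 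The remaining steps (taking $\varphi_y(x)=\widetilde F(x)-\widetilde F(y)-\langle\nabla\widetilde F(y),x-y\rangle$ and verifying~\eqref{phiy equals 0 at y for extension of not necessarily convex jets}--\eqref{every tangent function lies ... for extension of not necessarily convex jets}) are identical to the paper's appeal to the necessity part of Theorem~\ref{Corollary to C11 Whitney thm for coercive convex functions}.
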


\begin{rem}
{\em Once again, in contrast to the $C^{1,1}$ case which we studied in \cite{AzagraLeGruyerMudarra}, the gradient of the function $F$ given by \eqref{general extension formula} does not have optimal local Lipschitz constants. As observed in Remark \ref{remark on the local Lip constants of the gradient of F} and Theorem \ref{when uniformly essentially coercive it is possible to control seminorms}, our method does not provide extensions whose gradients have local Lipschitz constants independent of the dimension or smaller than those given by the classical Whitney operator. Hence we do not recommend using the above formula if the magnitude of the local Lipschitz constants of the gradient is a concern and convexity is not. Nonetheless, its form and its explicit character may become useful in other situations, for instance when dealing with delta-convex functions.}
\end{rem}

\begin{proof}[Proof of Theorem \ref{Extension formula for not necessarily convex jets}]
Assume that the jet $(f, G)$ has a $C^{1,1}_{\textrm{loc}}$ extension $F$. Set $B_0=\emptyset$ and for each $k\in\N$ denote $B_{k}=B(0,k)$ and $M_k=\textrm{Lip}\left(\nabla F_{|_{B_k}}\right)$. Then $F+\frac{1}{2}M_k|\cdot|^2$ and $\frac{1}{2}M_k|\cdot|^2-F$ are convex functions on $B_k$, for each $k\in\N$. Define $\psi_0=0$, and, for $k\geq 1$, 
$$
\psi_k(x)=
\begin{cases}
0 & \textrm{ if } x\in B_{k-1} \\
(1+M_{8k}) \left( |x| -(k-1) \right)^2  & \textrm{ if } x\in\R^n\setminus B_{k-1},
\end{cases}
$$
and
$$
\psi(x)=\sum_{k=1}^{\infty}\psi_{k}(x).
$$
It is clear that the functions $\psi_k, \psi:\R^n\to\R$ are convex and of class $C^{1,1}_{\textrm{loc}}$. Next we check that $F+\psi$ is convex (in fact strongly convex) on $\R^n$.
We can write, on each $B_{4(k+1)}\setminus B_{4k}$,
$$
F+\psi=\left(F+\frac{1}{2}M_{4(k+1)}|\cdot|^{2}\right)+ \left(\psi-\frac{1}{2}M_{4(k+1)}|\cdot|^{2}\right),
$$ 
with $F+\frac{1}{2}M_{4(k+1)}|\cdot|^2$ convex on $B_{4(k+1)}$, and of course $\R^n=\bigcup_{k=0}^{\infty}\left( B_{4(k+1)}\setminus B_{4k}\right)$. Therefore, recalling that $F, \psi\in C^{1,1}_{\textrm{loc}}$, in order to check that $F+\psi$ is strongly convex on $\R^n$ it is sufficient to see that if $x, v\in\R^n$ and $|v|=1$, the second derivative of the function $t\mapsto \beta(t):=\psi(x+tv)- \frac{1}{2}M_{4(k+1)}|x+tv|^2$ (which exists for almost every $t\in\R$) is bounded below by some strictly positive number. In fact this function is twice differentiable on $\R$ except on the countable set $\{t: |x+tv|\in\N\}$. If $t_0$ is a point of differentiability of $\beta'(t)$ and $x+t_0 v\in B_{4(k+1)}\setminus B_{4k}$ then, by calculating the second derivatives at $t=0$ of the convex functions $t\mapsto \alpha_{k}(t):=\psi_k(x+tv)$, one can check that, for $x+t_0v\in B_{4(k+1)}\setminus B_{4k}$ and $|v|=1$ one has
$$
\alpha_{k}''(t_0) \geq (1+M_{8k})\left( 2-\frac{2(k-1)}{|x+t_0 v|}\right)\geq (1+M_{8k}).
$$
and therefore, denoting $\alpha(t)=\psi(x+tv)$, 
$$
\alpha''(t_0)\geq 1+M_{8k}\geq 1 + M_{{4(k+1)}},
$$
hence
$$
\beta''(t_0)\geq 1.
$$
We have seen that $\beta''(t)\geq 1$ for almost every $t\in\R$, and as we noted above this implies that $F+\psi$ is strongly convex on $\R^n$.

If $Y:=\textrm{span}\{\nabla F(y)+\nabla\psi(y)-\nabla F(z)-\nabla\psi(z) : y, z\in E\}=\R^n$ then by applying the necessity part of Theorem \ref{Corollary to C11 Whitney thm for coercive convex functions} to the jet $(\widetilde{f}, \widetilde{G}):=(f+\psi, G+\nabla\psi)$ we immediately get a family of functions $\{\psi_y\}_{y\in E}$ satisfying \eqref{span of derivatives in general extension formula}--\eqref{every tangent function lies ... for extension of not necessarily convex jets}.
Otherwise we proceed as follows. Note that the gradient of the function $\psi$ is of the form 
\begin{equation}\label{gradient of psi at x proportional to x}
\nabla\psi(x)=\lambda(x) x,
\end{equation}
where $\lambda:\R^n\to [0, \infty)$, and $\lambda(x)=0$ if and only if $x=0$. By assumption, there are points $x_0, x_1, ..., x_n\in E$ such that $\{x_1-x_0, ..., x_n-x_0\}$ are linearly independent. Up to replacing the balls $B(0,k)$ with balls $B(x_0, k)$ in the above construction and translating coordinates, we may assume without loss of generality that $x_0=0$ and therefore $\{x_1, ..., x_n\}$ is a basis of $\R^n$. Now, for each $R>1$, consider the function
$$
\psi_R(x)=\psi(Rx),
$$
which clearly has the property that $F+\psi_R$ is strongly convex. We claim that, for $R>1$ large enough, we have
$$
\textrm{span}\{\nabla F(y)+\nabla\psi_R(y)-\nabla F(z)-\nabla\psi_R(z) : y, z\in E\}=\R^n.
$$
Indeed, we have
$
\nabla\psi_R(x)=R\nabla\psi(Rx),
$
so by using \eqref{gradient of psi at x proportional to x} we can write
$$
\nabla \psi_R(x_j)=R^2\lambda_j x_j, \,\,\, j=1, ..., n,
$$
with $\lambda_j>0$, for every $j=1, ..., n$, $R>1$. Then
$$
\frac{1}{R^2}\left(\nabla F(x_j)+\nabla\psi_R(x_j)-\nabla F(0)\right)=\frac{1}{R^2}\left(\nabla F(x_j)-\nabla F(0)\right) +\lambda_j x_j, \,\,\, j=1, ...n,
$$
and by taking the determinants of the matrices formed by the vectors of each side of this equality and letting $R\to\infty$ we obtain
\begin{eqnarray*}
& & \lim_{R\to\infty}\textrm{det}\left(\frac{1}{R^2}\left(\nabla (F+\psi_R)(x_j)-\nabla F(0)\right)\right)_{j=1}^{n} \\
=& &
\lim_{R\to\infty}\textrm{det}\left(\frac{1}{R^2}\left(\nabla F(x_j)-\nabla F(0)\right) +\lambda_j x_j\right)_{j=1}^{n}=
\textrm{det}\left(\lambda_j x_j\right)_{j=1}^{n}\neq 0.
\end{eqnarray*}
Therefore we can find and fix some $R>1$ large enough so that
$$
\textrm{det}\left(\frac{1}{R^2}\left(\nabla (F+\psi_R)(x_j)-\nabla F(0)\right)\right)_{j=1}^{n}\neq 0,
$$
hence also
$$
\textrm{det}\left(\nabla (F+\psi_R)(x_j)-\nabla F(0)\right)_{j=1}^{n}\neq 0,
$$
which since $\nabla\psi_R(0)=0$ shows our claim. Therefore, by applying the necessity part of Theorem \ref{Corollary to C11 Whitney thm for coercive convex functions} to the jet $(\widetilde{f}, \widetilde{G}):=(f+\psi_R, G+\nabla \psi_R)$ we may conclude as before.

Conversely, if there exist a function $\psi$ and functions $\varphi_y$ as in the statement, then by applying Theorem \ref{Corollary to C11 Whitney thm for coercive convex functions} to the jet $(\widetilde{f}, \widetilde{G}):=(f+\psi, G+\nabla\psi)$, we obtain an essentially coercive $C^{1,1}_{\textrm{loc}}$ convex function $\widetilde{F}$ which extends this jet to $\R^n$. Then the $C^{1,1}_{\textrm{loc}}$ function $F:=\widetilde{F}-\psi$ extends the jet $(f, G)$, and the formula for $\widetilde{F}$ given by Theorem \ref{Corollary to C11 Whitney thm for coercive convex functions} yields the formula \eqref{general extension formula} for $F$.
\end{proof}


\section{Proofs of the main results}

Of course Theorem \ref{MainTheorem with P} is more general than Theorem \ref{Corollary to C11 Whitney thm for coercive convex functions}, but its proof is necessarily much more technical and less clear. For this reason, and because Theorem \ref{Corollary to C11 Whitney thm for coercive convex functions} and its consequence Theorem \ref{Corollary 2 to the main coercive result} are powerful enough to have some interesting applications (see, e.g.\,\cite[Theorem 1.12]{AzagraHajlasz}), we choose to prove them separately. 

\subsection{Proof of Theorem \ref{Corollary to C11 Whitney thm for coercive convex functions}, sufficiency.}
The overall strategy is similar to that of the proofs of the main results of \cite{AzagraMudarra1, AzagraLeGruyerMudarra}, and consists in showing that the function
\begin{equation}\label{definition of g}
g(x):=\inf_{y\in E}\left\{f(y)+\langle G(y), x-y\rangle+\varphi_{y}(x)\right\}
\end{equation}
is greater than or equal than the minimal extension
\begin{equation}\label{definition of m}
m(x) := \sup_{z \in E} \lbrace f(z)+ \langle G(z), x-z \rangle \rbrace
\end{equation}
and satisfies estimates of the type $g(x+h)+g(x-h)-2g(x)\leq C_{R}|h|^2$ on each ball $B(0, R)$, and then show that these estimates are preserved, up to some constants, depending on $R$, $n$ and the function $m(x)$, when we take the convex envelope of $g$. 

Observe that \eqref{every tangent function lies above all tangent planes corollary} implies that $m$ and $g$ are finite everywhere; indeed, taking two points $y_0, z_0\in E$, we have

\begin{equation}\label{inequalities that imply boundedness of f and G on bounded sets}
-\infty<f(z_0)+\langle G(z_0), x-z_0\rangle \leq m(x)\leq g(x)\leq f(y_0) +\langle G(y_0), x-y_0\rangle +\varphi_{y_0}(x) <\infty
\end{equation}
for every $x\in\R^n$.
In particular we have
\begin{equation}\label{m is less than g}
m(x)\leq g(x) \textrm{ for all } x\in\R^n.
\end{equation}
Besides $m$ is obviously convex on $\R^n$, and by using conditions \eqref{every tangent function lies above all tangent planes corollary} and \eqref{phiy equals 0 at y corollary} it is easy to see that $m$ is really an extension of $f$, that is, $f(x)=m(x)$ for every $x\in E$. Since convex functions on $\R^n$ are bounded on bounded sets, we see in particular that $f$ is bounded on bounded sets. Using this fact together with \eqref{inequalities that imply boundedness of f and G on bounded sets}, we also deduce that $G$ is bounded on bounded sets.

According to Theorem \ref{rigid global behavior of convex functions}, condition \eqref{essentially coercive data corollary} implies that $m$ is essentially coercive, that is, there exist a convex function $c:\R^n\to\R$ and a vector $v\in\R^n$ such that
$$
m(x)=c(x)+\langle v, x\rangle \textrm{ for all } x\in\R^n,
$$
with $\lim_{|x|\to\infty}c(x)=\infty$. In particular the function $c$ attains a global minimum at some point $x_0\in\R^n$. Hence, up to replacing the jet $(f,G)$ with the jet
$(\widetilde{f}, \widetilde{G})$ defined by $\widetilde{f}(y)=f(y)-c(x_0)-\langle v, y\rangle$, $\widetilde{G}(y)=G(y)-v$, and the function $m(x)$ with $c(x)-c(x_0)$, we may and do assume in the rest of the proof that 
\begin{equation}\label{m is assumed coercive}
\lim_{|x|\to\infty} m(x)=\infty, \,\,\, \textrm{ and } m(x)\geq 0 \textrm{ for all } x\in\R^n
\end{equation}
(note that any function that does not depend on $y$ can be taken in and out of a sum in the infimum defining $g$, and the same goes for any affine function and the convex envelope).

From the definitions of $g$ and $m$, and bearing in mind that $\varphi_{y}(y)=0$ for each $y\in E$, we also obtain $$f(x) \leq m(x) \leq g(x) \leq f(x) \textrm{ for every } x\in E,$$ hence 
\begin{equation}\label{g=f on E}
g(x) = m(x) = f(x) \textrm{ for all } x\in E.
\end{equation}

\begin{lem}\label{if varphiy then varphitilde restricts}
For any number $a>0$, if $\{\varphi\}_{y\in E}$ satisfies conditions \eqref{phiy equals 0 at y}-\eqref{every tangent function lies above all tangent planes corollary} then the family $\{\widetilde{\varphi}_y\}_{y\in E}$ defined by
$$
\widetilde{\varphi}_y(x)=\varphi_y(x)+ a|x-y|^2
$$
satisfies conditions 
\eqref{phiy equals 0 at y}-\eqref{every tangent function lies above all tangent planes corollary} of Theorem \ref{Corollary to C11 Whitney thm for coercive convex functions} (with slightly larger constants in \eqref{finite sup of Lip constants of phiy on balls corollary}), as well as the following one: for every $R>0$ there exists $\eta>R$ such that, for every $x\in B(0, R)$,
\begin{equation}\label{the inf restricts to balls condition}
\inf_{y\in E\cap B(0, \eta)}\{f(y)+\langle G(y), x-y\rangle +\widetilde{\varphi}_{y}(x)\}=
\inf_{y\in E}\{f(z)+\langle G(y), x-y\rangle +\widetilde{\varphi}_{y}(x)\}.
\end{equation} 
\end{lem}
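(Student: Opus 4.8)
The plan is to dispatch the first three conditions by direct inspection and then to extract the new property \eqref{the inf restricts to balls condition} from the coercivity already present in the data, exploiting the genuinely coercive quadratic term that has been added to each $\varphi_y$.

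That $\{\widetilde{\varphi}_y\}_{y\in E}$ still satisfies \eqref{phiy equals 0 at y}--\eqref{every tangent function lies above all tangent planes corollary} is immediate. Indeed $\widetilde{\varphi}_y=\varphi_y+a|\cdot-y|^2$ is a sum of nonnegative functions of class $C^{1,1}_{\textrm{loc}}$, and both summands, together with their gradients, vanish at $y$, which gives \eqref{phiy equals 0 at y}; since $\nabla(a|x-y|^2)=2a(x-y)$ has Lipschitz constant $2a$, the supremum in \eqref{finite sup of Lip constants of phiy on balls corollary} for the new family is at most $M_R+2a<\infty$; and $\widetilde{\varphi}_y\ge\varphi_y$ makes \eqref{every tangent function lies above all tangent planes corollary} carry over verbatim.

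The substance of the lemma is \eqref{the inf restricts to balls condition}. The key observation is that taking the supremum over $z\in E$ in \eqref{every tangent function lies above all tangent planes corollary} gives $m(x)\le f(y)+\langle G(y),x-y\rangle+\varphi_y(x)$ for every $y\in E$ and every $x\in\R^n$; together with the normalization $m\ge 0$ made earlier in this subsection, this yields the uniform lower bound
$$
f(y)+\langle G(y),x-y\rangle+\widetilde{\varphi}_y(x)\ \ge\ m(x)+a|x-y|^2\ \ge\ a\,|x-y|^2\qquad(y\in E,\ x\in\R^n).
$$
Now fix $z_0\in E$, put $R_0:=|z_0|$, and, enlarging $R$ if necessary, assume $R\ge R_0$ so that $z_0\in E\cap B(0,R)$. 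From $\varphi_{z_0}(z_0)=0$, $\nabla\varphi_{z_0}(z_0)=0$ and \eqref{finite sup of Lip constants of phiy on balls corollary}, the standard second-order estimate on the convex set $B(0,R)$ gives $\varphi_{z_0}(x)\le\frac{M_R}{2}|x-z_0|^2$ for $x\in B(0,R)$, hence, using also $f(z_0)+\langle G(z_0),x-z_0\rangle\le m(x)$,
$$
f(z_0)+\langle G(z_0),x-z_0\rangle+\widetilde{\varphi}_{z_0}(x)\ \le\ m(x)+\Bigl(a+\tfrac{M_R}{2}\Bigr)|x-z_0|^2\ \le\ m(x)+\Bigl(a+\tfrac{M_R}{2}\Bigr)(R+R_0)^2 .
$$
Subtracting the two displays, for every $y\in E$ and every $x\in B(0,R)$ the difference between the expression at $y$ and the one at $z_0$ is bounded below by $a|x-y|^2-(a+\frac{M_R}{2})(R+R_0)^2$, which is $\ge 0$ as soon as $|x-y|\ge(R+R_0)\sqrt{1+M_R/(2a)}$, hence (since $|x-y|\ge|y|-R$ for $x\in B(0,R)$) whenever $|y|\ge\eta:=R+(R+R_0)\sqrt{1+M_R/(2a)}$. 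As $\eta>R\ge R_0$, the point $z_0$ lies in $E\cap B(0,\eta)$, and we have shown that every $y\in E\setminus B(0,\eta)$ satisfies
$$
f(y)+\langle G(y),x-y\rangle+\widetilde{\varphi}_y(x)\ \ge\ \inf\bigl\{\,f(y')+\langle G(y'),x-y'\rangle+\widetilde{\varphi}_{y'}(x)\ :\ y'\in E\cap B(0,\eta)\,\bigr\};
$$
thus points of $E$ outside $B(0,\eta)$ cannot lower the infimum, which is precisely \eqref{the inf restricts to balls condition}. This computation also produces the explicit value $\eta(R)=R+(R+R_0)\sqrt{1+M_R/(2a)}$ recorded in Remark \ref{remark on the local Lip constants of the gradient of F}.

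No step is a genuine obstacle; the one point requiring a little care is to use a \emph{global} lower bound on $m$ (namely $m\ge 0$, available after the normalization of this subsection, or, failing that, the trivial bound $m(x)\ge f(z_0)+\langle G(z_0),x-z_0\rangle$) rather than a ball-dependent one, so that the coercive gain $a|x-y|^2$ dominates the quadratic growth of $\varphi_{z_0}$ \emph{uniformly} in $x\in B(0,R)$. In effect the lemma isolates, once and for all, the fact that the infimum defining $g$ is locally attained on bounded portions of $E$, which is what makes the subsequent local $C^{1,1}$ estimates for $g$ and for $\textrm{conv}(g)$ possible.
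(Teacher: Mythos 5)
Your proof is correct and follows essentially the same route as the paper's: fix a base point in $E$, use the second-order Taylor bound coming from \eqref{finite sup of Lip constants of phiy on balls corollary} to get a uniform upper bound for $\widetilde{\varphi}_{y_0}$ on $B(0,R)$, and then let the extra term $a|x-y|^2$ dominate for $|y|\geq\eta(R)=R+(R+R_0)\sqrt{1+M_R/(2a)}$, the exact same threshold the paper uses. The only cosmetic difference is that you route the comparison through $m(x)$ and the normalization $m\ge 0$, which is harmless but not actually needed: the paper simply applies \eqref{every tangent function lies above all tangent planes corollary} with $z=y_0$ to pass directly from the $y$-expression to the $y_0$-expression, and the $m(x)$ term you introduce cancels in your subtraction anyway.
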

\begin{proof}
It is clear that these new functions $\widetilde{\varphi}_y$ also fulfill conditions 
\eqref{phiy equals 0 at y}, \eqref{finite sup of Lip constants of phiy on balls corollary} and \eqref{every tangent function lies above all tangent planes corollary} of Theorem \ref{Corollary to C11 Whitney thm for coercive convex functions}, with slightly larger constants $$\widetilde{M}_R=M_R+2a$$ in \eqref{finite sup of Lip constants of phiy on balls corollary}. Let us see that the $\widetilde{\varphi}_y$ also satisfy condition \eqref{the inf restricts to balls condition}.
Take $R_0>0$ so that $E\cap B(0, R_0)$ is nonempty, fix a point $y_0\in E\cap B(0, R_0)$, and for any given $R\geq R_0$ note that condition \eqref{finite sup of Lip constants of phiy on balls corollary} implies that
\begin{equation}\label{estimate for phiy0} 
\widetilde{\varphi}_{y_0}(x) \leq \frac{(R+R_0)^2 \widetilde{M}_{R}}{2}=\frac{(R+R_0)^2 (M_R +2a)}{2} \textrm{ for every } x\in B(0, R).
\end{equation}
We then set 
\begin{equation}\label{defn of eta 1}
\eta=\eta(R):= R+ (R+R_0)\sqrt{\widetilde{M_R}/2a}= R+ (R+R_0)\sqrt{1+M_R/2a}.
\end{equation} 
We obtain, for every $y\in E\setminus B(0, \eta)$ and every $x\in B(0,R)$, that
\begin{eqnarray*}
& & f(y)+\langle G(y), x-y\rangle + \widetilde{\varphi}_y(x)=
f(y)+\langle G(y), x-y\rangle + \varphi_y(x)+a|x-y|^2=\\
& & \geq f(y_0)+\langle G(y_0), x-y_0\rangle +a |x-y|^2
\geq f(y_0)+\langle G(y_0), x-y_0\rangle + a\left(\eta-R\right)^2 \\
& &\geq f(y_0)+\langle G(y_0), x-y_0\rangle + \frac{(R+R_0)^2 \widetilde{M}_{R}}{2}
\geq f(y_0)+\langle G(y_0), x-y_0\rangle +\widetilde{\varphi}_{y_0}(x)\\
& &\geq \inf_{z\in E\cap B(0, \eta)}\{f(z)+\langle G(z), x-z\rangle +\widetilde{\varphi}_{z}(x)\}.
\end{eqnarray*}
This shows that
$
\inf_{z\in E\cap B(0, \eta)}\{f(z)+\langle G(z), x-z\rangle +\widetilde{\varphi}_{z}(x)\}=
\inf_{z\in E}\{f(z)+\langle G(z), x-z\rangle +\widetilde{\varphi}_{z}(x)\}.
$
\end{proof}
Hence, up to replacing $\{\varphi_{y}\}_{y\in E}$ with $\{ \widetilde{\varphi}_{y}\}_{y\in E}$, from now on we may and do assume that the family $\{\varphi_{y}\}_{y\in E}$ satisfies conditions \eqref{phiy equals 0 at y}-\eqref{every tangent function lies above all tangent planes corollary} and \eqref{the inf restricts to balls condition}.

\begin{lem}\label{g is locally Lip and satisfies locally uniform supdiff estimates}
The function $g$ is locally Lipschitz, and for every $R>0$ there exists $C_R>0$ such that for every $x, h\in B(0,R)$ we have
$$
g(x+h)+g(x-h)-2g(x)\leq C_R|h|^2.
$$
\end{lem}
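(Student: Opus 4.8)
The plan is to use the representation \eqref{definition of g} of $g$ as the pointwise infimum, over $y\in E$, of the functions $\phi_y(x):=f(y)+\langle G(y),x-y\rangle+\varphi_y(x)$, each of which is of class $C^{1,1}_{\textrm{loc}}$ with $\nabla\phi_y(x)=G(y)+\nabla\varphi_y(x)$. The essential preparatory fact, already secured, is that we may assume the family $\{\varphi_y\}$ satisfies \eqref{the inf restricts to balls condition} (Lemma \ref{if varphiy then varphitilde restricts}): on a given ball $B(0,R)$ the infimum defining $g$ is actually realized over the \emph{bounded} index set $E\cap B(0,\eta)$, where $\eta=\eta(R)>R$ is as in \eqref{defn of eta 1}. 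Throughout I will tacitly assume $R$ is large enough that $E\cap B(0,R)\neq\emptyset$ and the restriction lemma applies; the remaining (smaller) balls are covered a fortiori.

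For the local Lipschitz property, I would fix $R$, put $\eta=\eta(R)$, and note that for $y\in E\cap B(0,\eta)$ and $x\in B(0,R)$ one has, using $\nabla\varphi_y(y)=0$ from \eqref{phiy equals 0 at y} together with \eqref{finite sup of Lip constants of phiy on balls corollary}, that $|\nabla\varphi_y(x)|\le M_\eta|x-y|\le M_\eta(R+\eta)$; combined with the boundedness of $G$ on $B(0,\eta)$ (observed just after \eqref{inequalities that imply boundedness of f and G on bounded sets}), this bounds $|\nabla\phi_y(x)|$ on $B(0,R)$ by a constant $L_R$ independent of $y\in E\cap B(0,\eta)$. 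Hence the $\phi_y$, $y\in E\cap B(0,\eta)$, are uniformly $L_R$-Lipschitz on $B(0,R)$, and by \eqref{the inf restricts to balls condition} their infimum $g$ — which is real-valued by \eqref{inequalities that imply boundedness of f and G on bounded sets} — is $L_R$-Lipschitz on $B(0,R)$.

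For the second-order estimate I would fix $R$, set $\rho:=\max\{2R,\eta(R)\}$ and $C_R:=M_\rho$. Given $x,h\in B(0,R)$ and $\varepsilon>0$, condition \eqref{the inf restricts to balls condition} lets me choose $y_0\in E\cap B(0,\eta(R))\subseteq B(0,\rho)$ with $\phi_{y_0}(x)\le g(x)+\varepsilon$. From $g(x\pm h)\le\phi_{y_0}(x\pm h)$, $g(x)\ge\phi_{y_0}(x)-\varepsilon$, and the fact that the affine-in-$x$ part of $\phi_{y_0}$ contributes nothing to a symmetric second difference, I get
\[
g(x+h)+g(x-h)-2g(x)\ \le\ \varphi_{y_0}(x+h)+\varphi_{y_0}(x-h)-2\varphi_{y_0}(x)+2\varepsilon .
\]
Since $x,x\pm h\in B(0,2R)\subseteq B(0,\rho)$ and $y_0\in E\cap B(0,\rho)$, condition \eqref{finite sup of Lip constants of phiy on balls corollary} makes $\nabla\varphi_{y_0}$ an $M_\rho$-Lipschitz map on $B(0,\rho)$, and then the elementary identity $\psi(x+h)+\psi(x-h)-2\psi(x)=\int_0^1\langle\nabla\psi(x+th)-\nabla\psi(x-th),h\rangle\,dt$ applied to $\psi=\varphi_{y_0}$ bounds the right-hand side by $M_\rho|h|^2+2\varepsilon$. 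Letting $\varepsilon\to0^+$ yields $g(x+h)+g(x-h)-2g(x)\le C_R|h|^2$ with $C_R=M_\rho$.

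The only point requiring genuine care is this reduction to a bounded set of indices: the raw infimum \eqref{definition of g} runs over all of $E$, so without \eqref{the inf restricts to balls condition} there would be no uniform $C^{1,1}$ control on the $\varphi_y$ entering the near-minimizers, and neither assertion would follow. Once that reduction is available — which is precisely why Lemma \ref{if varphiy then varphitilde restricts} was established first — both parts are routine consequences of equi-Lipschitz bounds and the standard second-order Taylor estimate for $C^{1,1}$ functions.
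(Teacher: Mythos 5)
Your proof is correct and takes essentially the same route as the paper's: the second-order estimate is handled exactly as in the paper (restrict the infimum to $E\cap B(0,\eta)$ via Lemma \ref{if varphiy then varphitilde restricts}, pick a near-minimizer, cancel the affine part in the symmetric second difference, and bound the remaining $\varphi_{y_0}$-contribution by $M_{\max\{2R,\eta(R)\}}|h|^2$). The only variation is the local Lipschitz part, where you bound $|\nabla\phi_y|$ uniformly and use that an infimum of equi-Lipschitz functions is Lipschitz, whereas the paper derives it from a one-sided version of the same near-minimizer estimate; both rest on the same reduction and on \eqref{finite sup of Lip constants of phiy on balls corollary}.
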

\begin{proof}
Given $R>0$, by \eqref{the inf restricts to balls condition} there exists $\eta=\eta(R)>0$ such that
$$
g(x)=\inf_{y\in E\cap B(0, \eta)}\{f(y)+\langle G(y), x-y\rangle +\varphi_{y}(x)\} \textrm{ for all } x\in B(0, R).
$$
Then, if $x, h\in B(0,R)$, for any given $\varepsilon>0$ we may find $y\in B(0, \eta)$ such that
\begin{equation}\label{estimate for g in the proof of locally uniform supdiff inequality}
g(x)\geq f(y)+\langle G(y), x-y\rangle+\varphi_y(x)-\varepsilon,
\end{equation}
and therefore, using the definition of $g$ (for the first inequality) and Taylor's theorem together with condition \eqref{finite sup of Lip constants of phiy on balls corollary} (for the second inequality), we obtain
\begin{align*}
g(x+h)& +g(x-h)-2g(x)  \leq f(y)+\langle G(y), x+h-y \rangle + \varphi_y(x+h) \\
& \quad + f(y)+\langle G(y), x-h-y \rangle + \varphi_y(x-h) \\
& \quad -2 \left( f(y)+ \langle G(y), x-y \rangle +  \varphi_y(x) \right) + 2 \varepsilon \\
& = \varphi_y(x+h) + \varphi_y(x-h)- 2\varphi_y(x) + 2 \varepsilon \\
& \leq  C_R |h|^2 +2 \varepsilon,
\end{align*}
where 
$C_R$ is given by condition \eqref{finite sup of Lip constants of phiy on balls corollary} applied with $\max\{2R, \eta(R)\}$ in place of $R$.
Since $\varepsilon>0$ is arbitrary, by sending $\varepsilon$ to $0$ we get what we need.
On the other hand, using again \eqref{estimate for g in the proof of locally uniform supdiff inequality}, we also have
\begin{eqnarray*}
& &g(x+h)-g(x)  \leq \\
& & f(y)+\langle G(y), x+h-y \rangle + \varphi_y(x+h)- \left( f(y)+ \langle G(y), x-y \rangle +  \varphi_y(x) \right) + \varepsilon \\
& & = \langle G(y), h\rangle + \varphi_y(x+h) -\varphi_y(x) + \varepsilon \\
& &  \leq  \left(\sup_{w\in B(0, \eta(R))}|G(w)|\right) |h| +\frac{1}{2}C_R|h|^2 + \varepsilon,
\end{eqnarray*}
which by letting $\varepsilon$ go to $0$ implies that
$$
g(x+h)-g(x) \leq  \left(\sup_{w\in B(0, \eta(R))}|G(w)|\right) |h| +\frac{1}{2}C_R|h|^2
$$
for all $x, h\in B(0, R)$. If $x, z\in B(0, R/2)$ and we take $h=z-x$ in this inequality, we obtain that
$$
g(z)-g(x) \leq \left(\sup_{w\in B(0, \eta(R))}|G(w)|\right) |z-x| +\frac{1}{2}C_R|z-x|^2,
$$
for all $x, z\in B(0, R/2)$. This implies that $g:\R^n\to\R$ is locally Lipschitz.
\end{proof}
Next we see that, under the standing assumptions, this kind of inequality is preserved (up to some constants) when we pass to the convex envelope.
\begin{lem}\label{the convex envelope is smooth}
Let $g:\R^n\to\R$ be a continuous function such that 
$
\lim_{|x|\to\infty}g(x)=\infty
$
and such that for every $R>0$ there exists $C_R>0$ so that for every $x, h\in B(0,R)$ we have
$$
g(x+h)+g(x-h)-2g(x)\leq C_R|h|^2.
$$
Then the function $F=\textrm{conv}(g)$ has a similar property: for every $R>0$ there exists $C'_R>0$ such that for every $x, h\in B(0,R)$ we have
$$
F(x+h)+F(x-h)-2F(x)\leq C'_R |h|^2.
$$
Therefore $F\in C^{1,1}_{\textrm{loc}}(\R^n)$.
\end{lem}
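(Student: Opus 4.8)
The plan is to follow the scheme of the global $C^{1,1}$ result in \cite{AzagraLeGruyerMudarra}: use the Carath\'eodory-type formula \eqref{convex envelope as the inf...} for $\textrm{conv}(g)$ and push the local quadratic second-difference bound for $g$ through it. The one genuinely new point is that here the estimate for $g$ is only available on balls, so before applying it I must control where the points in a near-optimal convex representation of a given $x\in B(0,R)$ can lie; this is where coercivity enters. First I would normalize: since $g$ is continuous and $\lim_{|x|\to\infty}g(x)=\infty$, $g$ attains its minimum, and replacing $g$ by $g-\min g$ (which changes $F=\textrm{conv}(g)$ by the same constant and affects neither the hypothesis nor the conclusion) we may assume $g\ge 0$; then $0\le F\le g$, so $F$ is real-valued. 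Fix $R>0$, set $m_R:=\sup_{B(0,R)}g<\infty$ and $\phi(\rho):=\inf\{g(y):|y|\ge\rho\}$, and note that coercivity gives $\phi(\rho)\to\infty$ as $\rho\to\infty$; hence we may fix $\rho=\rho(R)$ with $\phi(\rho)\ge 2(n+1)(m_R+1)$.

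Now fix $x,h\in B(0,R)$ and $\varepsilon\in(0,1]$. By \eqref{convex envelope as the inf...} there are $\lambda_j\ge 0$ and $x_j\in\R^n$ ($j=1,\dots,n+1$) with $\sum_j\lambda_j=1$, $\sum_j\lambda_j x_j=x$ and $\sum_j\lambda_j g(x_j)\le F(x)+\varepsilon\le g(x)+\varepsilon\le m_R+1$. Since $g\ge 0$, each term obeys $\lambda_j g(x_j)\le m_R+1$, so if $|x_j|>\rho$ (whence $g(x_j)\ge\phi(\rho)$) then $\lambda_j\le (m_R+1)/\phi(\rho)$. Put $J:=\{j:\lambda_j>0,\ |x_j|\le\rho\}$, $J':=\{j:\lambda_j>0,\ |x_j|>\rho\}$ and $\Lambda:=\sum_{j\in J'}\lambda_j$; then $\Lambda\le (n+1)(m_R+1)/\phi(\rho)\le \tfrac{1}{2}$ by the choice of $\rho$, so $\sum_{j\in J}\lambda_j=1-\Lambda\ge\tfrac{1}{2}>0$.

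The idea is then to move the whole displacement $\pm h$ onto the near points only: the vectors $\{x_j\pm\frac{h}{1-\Lambda}:j\in J\}\cup\{x_j:j\in J'\}$, carrying the weights $\lambda_j$, are convex combinations of at most $n+1$ points equal to $x\pm h$, so by \eqref{convex envelope as the inf...}
\begin{align*}
F(x+h) &\le \sum_{j\in J}\lambda_j\, g\Bigl(x_j+\tfrac{h}{1-\Lambda}\Bigr)+\sum_{j\in J'}\lambda_j\, g(x_j),\\
F(x-h) &\le \sum_{j\in J}\lambda_j\, g\Bigl(x_j-\tfrac{h}{1-\Lambda}\Bigr)+\sum_{j\in J'}\lambda_j\, g(x_j).
\end{align*}
Adding these and using $-2F(x)\le -2\sum_j\lambda_j g(x_j)+2\varepsilon$, the $J'$-sums cancel and one is left with $F(x+h)+F(x-h)-2F(x)\le\sum_{j\in J}\lambda_j\bigl[g(x_j+\tfrac{h}{1-\Lambda})+g(x_j-\tfrac{h}{1-\Lambda})-2g(x_j)\bigr]+2\varepsilon$. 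For $j\in J$ we have $|x_j|\le\rho$ and $|\tfrac{h}{1-\Lambda}|\le 2|h|\le 2R$, so $x_j$ and $x_j\pm\tfrac{h}{1-\Lambda}$ all lie in $B(0,\rho+2R)$; applying the hypothesis with $R':=\rho(R)+2R$ bounds each bracket by $C_{R'}|\tfrac{h}{1-\Lambda}|^2\le 4C_{R'}|h|^2$, and since $\sum_{j\in J}\lambda_j\le 1$ we obtain $F(x+h)+F(x-h)-2F(x)\le 4C_{R'}|h|^2+2\varepsilon$. Letting $\varepsilon\to 0$ gives the estimate with $C'_R:=4C_{\rho(R)+2R}$.

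It remains to deduce $F\in C^{1,1}_{\textrm{loc}}(\R^n)$, which is the local version of the implication already used in \cite{AzagraLeGruyerMudarra}: a convex function automatically satisfies $F(x+h)+F(x-h)-2F(x)\ge 0$, and, combining this with the bound $\le C'_R|h|^2$ just proved, one checks in the usual way that $\partial F$ is single-valued at every point (so $F\in C^1(\R^n)$) and that $|\nabla F(x)-\nabla F(y)|$ is bounded by a constant multiple of $|x-y|$ on each ball, i.e. $\nabla F$ is locally Lipschitz. The hard part of the argument is the localization step above: bounding, uniformly for $x\in B(0,R)$, the positions of the points that can carry non-negligible mass in a near-optimal representation of $x$. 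This uses coercivity of $g$ in an essential way (and explains why the hypothesis $\lim_{|x|\to\infty}g(x)=\infty$ is imposed), while the device of redistributing the displacement onto the near points is what lets us discard the far points $j\in J'$, about which the merely local hypothesis on $g$ gives no information.
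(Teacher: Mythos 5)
Your proof is correct, and it takes a genuinely different route from the paper's through the key localization step. Both you and the paper work from the Carath\'eodory formula \eqref{convex envelope as the inf...}, normalize $g\ge 0$, and use coercivity to keep the points carrying mass within a ball where the local second-difference bound for $g$ is usable. But the paper then exploits only the \emph{heaviest} point: ordering so that $\lambda_1^{(k)}\ge\dots\ge\lambda_{n+1}^{(k)}$, it uses $\lambda_1^{(k)}\ge\tfrac1{n+1}$ together with coercivity to show $x_1^{(k)}\in B(0,R')$, extracts a subsequential limit $(x_1,\lambda_1)$, and moves the entire displacement $\pm h$ onto that single point, rescaled by $1/\lambda_1$; this yields the constant $(n+1)C_{(n+1)R'}$. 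You instead split the representation into near points $J$ (inside $B(0,\rho)$) and far points $J'$, use coercivity to show the far mass $\Lambda=\sum_{j\in J'}\lambda_j$ is at most $\tfrac12$, and distribute the displacement $\pm h/(1-\Lambda)$ across \emph{all} near points while leaving the far ones untouched; the $J'$-terms cancel in the second difference and each near bracket is controlled by the hypothesis on $B(0,\rho+2R)$. This avoids the subsequence/compactness step entirely (the $\varepsilon\to 0$ limit is direct) and gives the constant $4C_{\rho(R)+2R}$, whose dependence on $n$ enters only indirectly through $\rho(R)$. Both constants are serviceable for the qualitative $C^{1,1}_{\mathrm{loc}}$ conclusion. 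Your final deduction of $C^{1,1}_{\mathrm{loc}}$ from the two-sided second-difference bound is the same standard fact the paper cites (via \cite[Proposition 2.2]{AzagraLeGruyerMudarra} localized, \cite[Corollary 3.3.8]{CannarsaSinestrari}, or \cite[Theorem 1.5]{AzagraFerrera2015}).
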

\begin{proof}
We will follow the proof of \cite{KirchheimKristensen} and make some appropriate changes. 
We may assume that 
\begin{equation}\label{g is positive}
g(z)\geq 0\textrm{ for all } z\in\R^n.
\end{equation}
Recall that an alternate expression for the convex envelope $F$ of a function $g:\R^n\to\R$ defined in \eqref{defn of convex envelope} is given by
\begin{equation}\label{alternate expression for the convex envelope}
F(x)=\inf\left\{\sum_{i=1}^{n+1}\lambda_i g(x_i) \, : \,  \lambda_i\geq 0, \, \sum_{i=1}^{n+1}\lambda_i=1, \, x=\sum_{i=1}^{n+1}\lambda_i x_i \right\}.
\end{equation}
Since $F\leq g$ by definition, and $g$ is bounded on bounded sets, so is $F$ (and in particular $F$ is well defined on all of $\R^n$). Then, since $\lim_{|x|\to\infty}g(x)=\infty$, we can find some $R'>R$ such that
\begin{equation}\label{first estimate for g}
g(z)\geq (n+1)\left( \sup_{y\in B(0, R)}F(y)+2\right) \textrm{ for all } z\in \R^n\setminus B(0, R').
\end{equation}
By applying the previous lemma with $(n+1)R'$ in place of $R$, we next find $C=C_{(n+1)R'}>0$ such that
\begin{equation}\label{second estimate for g}
g(z+v)+g(z-v)-2g(z)\leq C|v|^2 \textrm{ for all } z, v\in B(0, (n+1)R').
\end{equation}
Now, given $x, h\in B(0, R)$, we use \eqref{alternate expression for the convex envelope} to take a sequence $\{(\lambda_{i}^{(k)}, x_{i}^{(k)})_{1\leq i\leq n+1}\}_{k=1}^{\infty}$ such that
$$
\lambda_{1}^{(k)}\geq \lambda_{2}^{(k)} \geq ... \geq \lambda_{n+1}^{(k)}\geq 0, \,\,\, \sum_{i=1}^{n+1}\lambda_{i}^{(k)}=1, \,\,\, x=\sum_{i=1}^{n+1}\lambda_{i}^{(k)} x_{i}^{(k)}.
$$
and
\begin{equation}\label{approximation of F(x)}
F(x)=\lim_{k\to\infty}\sum_{i=1}^{n+1}\lambda_{i}^{(k)}g(x_{i}^{(k)}).
\end{equation}
Note that $$\lambda_{1}^{(k)}\geq\frac{1}{n+1}$$ for every $k$, and recall \eqref{g is positive}.
According to \eqref{approximation of F(x)}, there exists some $k_0\in\N$ such that if $k\geq k_0$ then
$$
\sum_{i=1}^{n+1}\lambda_{i}^{(k)} g(x_{i}^{(k)})<F(x)+1\leq\sup_{y\in B(0, R)}F(y)+1,
$$
which thanks to \eqref{g is positive} implies
$$
\frac{1}{n+1}g(x_{1}^{(k)})\leq\lambda_1  g(x_{1}^{(k)})<\sup_{y\in B(0, R)}F(y)+1.
$$
This inequality, together with \eqref{first estimate for g}, shows that
\begin{equation}
x_{1}^{(k)}\in B(0, R') \textrm{ for all } k\geq k_0.
\end{equation}
Therefore, up to extracting a subsequence, we may assume that these limits exist:
\begin{equation}
\lim_{k\to\infty}x_{1}^{(k)}:=x_1\in B(0, R'), \,\,\, \lim_{k\to\infty}\lambda_{1}^{(k)}:=\lambda_1\in [\tfrac{1}{n+1}, 1].
\end{equation}
Now we may write
$$
x+h=\lambda_{1}^{(k)}\left(x_{1}^{(k)}+\frac{h}{\lambda_{1}^{(k)}}\right) +\sum_{i=2}^{n+1}\lambda_{i}^{(k)} x_{i}^{(k)},
$$
and, because $F$ is convex and $F\leq g$, we have
\begin{align*}
& & F(x+h)=F\left(\lambda_{1}^{(k)}\left(x_{1}^{(k)}+\frac{h}{\lambda_{1}^{(k)}}\right) +\sum_{i=2}^{n+1}\lambda_{i}^{(k)} x_{i}^{(k)}\right) \\
& & \leq \lambda_{1}^{(k)}F\left(x_{1}^{(k)}+\frac{h}{\lambda_{1}^{(k)}}\right) +\sum_{i=2}^{n+1}\lambda_{i}^{(k)}F( x_{i}^{(k)} )\\
& & \leq \lambda_{1}^{(k)}g\left(x_{1}^{(k)}+\frac{h}{\lambda_{1}^{(k)}}\right) +\sum_{i=2}^{n+1}\lambda_{i}^{(k)}g( x_{i}^{(k)} ),
\end{align*}
which implies
\begin{equation}
F(x+h)-F(x)\leq \lambda_{1}^{(k)}\left(g\left(x_{1}^{(k)}+\frac{h}{\lambda_{1}^{(k)}}\right)-g(x_{1}^{(k)})\right) +\left(\sum_{i=1}^{n+1}\lambda_{i}^{(k)}g(x_{i}^{(k)}) -F(x)\right),
\end{equation}
and passing to the limit as $k\to\infty$ we get
\begin{equation}
F(x+h)-F(x)\leq \lambda_1\left( g(\left(x_1+\frac{h}{\lambda_1}\right)-g(x_1)\right).
\end{equation}
Similarly we obtain
\begin{equation}
F(x-h)-F(x)\leq \lambda_1\left( g(\left(x_1-\frac{h}{\lambda_1}\right)-g(x_1)\right).
\end{equation}
Thus we conclude, bearing in mind \eqref{second estimate for g} and the facts that $|h/\lambda_1|\leq (n+1)|h|\leq (n+1)R<(n+1)R'$ and $|x_1|\leq R'<(n+1)R'$, that
\begin{multline*}
F(x+h)+F(x-h)-2F(x) \\ \leq \lambda_1\left( g\left(x_1 +\frac{h}{\lambda_1}\right) + g\left(x_1-\frac{h}{\lambda_1}\right)-2g(x_1)\right)\lambda_1 C\left| \frac{h}{\lambda_1}\right|^2 \\
=\frac{1}{\lambda_1}C |h|^2\leq (n+1) C |h|^2.
\end{multline*}
We have shown that for every $R>0$ there exists $C'_R>0$ such that for every $x, h\in B(0, R)$, we have
$$
F(x+h)+F(x-h)-2F(x)\leq C'_R |h|^2.
$$
Since $F$ is convex, this is equivalent to saying that $F\in C^{1,1}_{\textrm{loc}}(\R^n)$, and in fact 
$$
\sup\left\{\frac{|\nabla F(x)-\nabla F(y)|}{|x-y|} : x, y\in B(0,R), x\neq y\right\}\leq C'_R
$$
(see, for instance, the proof of \cite[Proposition 2.2]{AzagraLeGruyerMudarra} restricted to a ball, and combine it with \cite[Corollary 3.3.8]{CannarsaSinestrari} or \cite[Theorem 1.5]{AzagraFerrera2015}).
\end{proof}

Let us now finish the proof of Theorem  \ref{Corollary to C11 Whitney thm for coercive convex functions}. Since $m$ is convex, by definition of convex envelope we have $$m \leq F \leq g \textrm{ on } \R^n,$$
which together with \eqref{g=f on E} allows us to conclude that $F=f$ on $E$.

Finally, we have $m \leq F$ on $\R^n$ and $F=m$ on $E$, where $m$ is convex and $F$ is differentiable on $\R^n$. This implies that $m$ is differentiable on $E$, with $\nabla m (x)= \nabla F (x) $ for all $x\in E$. Since we obviously have $G(x) \in \partial m (x)$ for all $x\in E$, we also obtain that $\nabla F(x) = G(x)$ for all $x\in E$. 
\qed

\medskip

\subsection{Proof of Theorem \ref{Corollary to C11 Whitney thm for coercive convex functions}, necessity.}
Let us assume that there exists a convex function $F\in C^{1,1}_{\textrm{loc}}(\R^n)$ such that $F(y)=f(y)$ and $\nabla F(y)=G(y)$ for all $y\in E$, and let us see that the functions $\varphi_y$, $y\in E$, defined by
\begin{equation}\label{definition of varphiy in necessity proof}
\varphi_y(x)=F(x)-F(y)-\langle \nabla F(y), x-y\rangle
\end{equation}
satisfy the conditions of Theorem \ref{Corollary to C11 Whitney thm for coercive convex functions}. Note that  $\nabla\varphi_y(x)=\nabla F(x)-\nabla F(y)$, so it is clear that \eqref{phiy equals 0 at y} holds true.
We also have, for every $x, y, z\in B(0, R)$, that
\begin{multline*}
\frac{|\nabla \varphi_{y}(x)-\nabla\varphi_y(z)|}{|x-z|}=
\frac{|\nabla F(x)-\nabla F(y)-(\nabla F(z)-\nabla F(y))|}{|x-z|} \\ =
\frac{|\nabla F(x)-\nabla F(z)|}{|x-z|}\leq \textrm{Lip}(\nabla F_{|_{B(0,R)}}),
\end{multline*}
so \eqref{finite sup of Lip constants of phiy on balls corollary} is also  satisfied. Besides, since $F$ is convex we have
\begin{equation}\label{F satisfies tangent functions condition}
F(z)+\langle \nabla G(z), x-z\rangle \leq F(x)=
F(y)+\langle \nabla F(y), x-y\rangle +\varphi_y(x) \textrm{ for all } x, y, z\in\R^n,
\end{equation} 
which implies \eqref{every tangent function lies above all tangent planes corollary}.  \qed

\medskip

\subsection{Proof of Theorem \ref{Corollary 2 to the main coercive result}.}
Although one can use condition \eqref{every tangent function lies above all tangent planes corollary 2} and standard techniques (smooth approximation and partitions of unity) to construct a family of functions $\{\varphi_y\}_{y\in E}$ as required to apply Theorem \ref{Corollary to C11 Whitney thm for coercive convex functions}, we prefer to use some tools of \cite{Azagra2013} so as to get a family of {\em convex} functions $\varphi_y$. Convexity of these functions is not needed in Theorem \ref{Corollary to C11 Whitney thm for coercive convex functions}, but we think that it may be useful in some other problems, and does not add any important complication in the proof of Theorem \ref{Corollary 2 to the main coercive result}.
\begin{lem}[Smooth maxima, see Lemma 1 of \cite{Azagra2013}]\label{smooth maxima}
For every $\delta>0$ there exists a $C^\infty$ function $\mathcal{M}_{\delta}:\R^{2}\to\R$ with the following properties:
\begin{enumerate}
\item $\mathcal{M}_{\delta}$ is convex;
\item $\max\{x, y\}\leq \mathcal{M}_{\delta}(x, y)\leq \max\{x,y\}+\frac{\delta}{2}$ for all $(x,y)\in\R^2$.
\item $\mathcal{M}_{\delta}(x,y)=\max\{x,y\}$ whenever $|x-y|\geq\delta$.
\item $\mathcal{M}_{\delta}(x,y)=\mathcal{M}_{\delta}(y,x)$.
\end{enumerate}
\end{lem}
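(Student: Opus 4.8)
The plan is to reduce the statement to the problem of smoothing the absolute value, using the elementary identity $\max\{x,y\}=\tfrac{x+y}{2}+\tfrac12|x-y|$. Concretely, I would fix an even, nonnegative function $\phi\in C^\infty(\R)$ supported in $[-1,1]$ with $\int_{\R}\phi=1$, set $\phi_\delta(s):=\delta^{-1}\phi(s/\delta)$ (which is even, nonnegative, supported in $[-\delta,\delta]$, of integral $1$, with vanishing first moment $\int_{\R}s\,\phi_\delta(s)\,ds=0$), define the mollified modulus $\theta_\delta:=|\cdot|*\phi_\delta$, that is $\theta_\delta(t)=\int_{\R}|t-s|\,\phi_\delta(s)\,ds$, and finally put
\[
\mathcal{M}_\delta(x,y):=\frac{x+y}{2}+\frac12\,\theta_\delta(x-y).
\]
Then I would verify the four listed properties one at a time.

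Smoothness is immediate: $\theta_\delta$ is $C^\infty$ because it is the convolution of a locally integrable function with a compactly supported $C^\infty$ kernel, so $\partial^\alpha\theta_\delta=|\cdot|*\partial^\alpha\phi_\delta$ is continuous for every multi-index $\alpha$, and $(x,y)\mapsto x\pm y$ is affine. Convexity of $\mathcal{M}_\delta$ follows from convexity of $\theta_\delta$: the convolution of the convex function $|\cdot|$ with a nonnegative kernel of mass one is convex (the defining convexity inequality passes through the integral), hence $(x,y)\mapsto\theta_\delta(x-y)$ is convex as the composition of a convex function with a linear map, and adding the affine term $\tfrac{x+y}{2}$ preserves convexity. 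Symmetry $\mathcal{M}_\delta(x,y)=\mathcal{M}_\delta(y,x)$ holds because $\phi$, and therefore $\theta_\delta$, is even.

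It remains to prove the two quantitative estimates. If $|t|\geq\delta$, then for every $s\in[-\delta,\delta]$ one has $\textrm{sign}(t-s)=\textrm{sign}(t)$, whence $|t-s|=|t|-s\,\textrm{sign}(t)$, and using $\int_{\R}\phi_\delta=1$ together with the vanishing first moment we get $\theta_\delta(t)=|t|$. Taking $t=x-y$, this gives property $(3)$, and also property $(2)$ whenever $|x-y|\geq\delta$. For arbitrary $t$, the triangle inequality $\bigl|\,|t-s|-|t|\,\bigr|\leq|s|$ yields
\[
0\leq\theta_\delta(t)-|t|=\int_{\R}\bigl(|t-s|-|t|\bigr)\phi_\delta(s)\,ds\leq\int_{\R}|s|\,\phi_\delta(s)\,ds\leq\delta,
\]
and therefore $0\leq\mathcal{M}_\delta(x,y)-\max\{x,y\}=\tfrac12\bigl(\theta_\delta(x-y)-|x-y|\bigr)\leq\tfrac{\delta}{2}$, which is property $(2)$.

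There is no genuine obstacle here; this is a routine mollification. The only point requiring a little care is the bookkeeping behind the two quantitative statements: one must take the kernel even, so that its first moment vanishes and the additive error in $(2)$ comes out at most $\delta/2$, and supported in $[-\delta,\delta]$, so that $\theta_\delta$ coincides with $|\cdot|$ outside that interval and property $(3)$ holds exactly. Note that a naive closed form such as $\tfrac{x+y}{2}+\tfrac12\sqrt{(x-y)^2+\varepsilon}$ is smooth, convex, symmetric and uniformly close to $\max$, but never equals $\max$ exactly, so property $(3)$ genuinely forces the flattening-to-$|\cdot|$ construction rather than such an analytic regularization.
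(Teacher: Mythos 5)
Your proof is correct and follows essentially the same strategy as the paper: both reduce the lemma to producing a smooth, even, convex $\theta_\delta$ that agrees with $|\cdot|$ outside $(-\delta,\delta)$, and then set $\mathcal{M}_\delta(x,y)=\tfrac12\bigl(x+y+\theta_\delta(x-y)\bigr)$. The only substantive difference is that the paper simply asserts it is ``easy to construct'' such a $\theta_\delta$ and lists its properties, whereas you make the construction explicit by mollifying $|\cdot|$ with a symmetric kernel supported in $[-\delta,\delta]$; this is a reasonable and standard way to discharge the paper's unproved claim.

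One small slip worth fixing: you attribute the lower bound $\theta_\delta(t)\geq|t|$ to ``the triangle inequality $\bigl|\,|t-s|-|t|\,\bigr|\leq|s|$,'' but that estimate only yields $|t-s|-|t|\geq-|s|$, hence it gives the \emph{upper} bound after integration but says nothing useful from below. The correct justification for $\theta_\delta\geq|\cdot|$ is Jensen's inequality,
\[
\theta_\delta(t)=\int_{\R}|t-s|\,\phi_\delta(s)\,ds\;\geq\;\Bigl|\int_{\R}(t-s)\,\phi_\delta(s)\,ds\Bigr|=|t|,
\]
using $\int\phi_\delta=1$ and $\int s\,\phi_\delta(s)\,ds=0$; alternatively, since $\theta_\delta$ is convex and coincides with $|\cdot|$ with slopes $\pm1$ outside $(-\delta,\delta)$, the supporting-line inequalities $\theta_\delta(t)\geq\pm t$ give the same conclusion. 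Everything else checks out.
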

\begin{proof}
It is easy to construct a $C^{\infty}$ function $\theta=\theta_{\delta}:\R\to (0,
\infty)$ such that:
\begin{enumerate}
\item $\theta(t)=|t|$ if and only if $|t|\geq\delta$;
\item $\theta$ is convex and symmetric;
\item $\textrm{Lip}(\theta)=1$.
\end{enumerate}
Then the function $\mathcal{M}_{\delta}$ defined by
$
\mathcal{M}_{\delta}(x,y)=\frac{1}{2}\left(x+y+\theta_{\delta}(x-y)\right)
$
has the required properties.
\end{proof}

These {\em smooth maxima} $\mathcal{M}_{\delta}$ are useful to approximate the
maximum of two functions without losing convexity or other key
properties of the functions, as in the following proposition.

\begin{prop}[See Proposition 2 of \cite{Azagra2013}]\label{properties of M(f,g)}
Let $\mathcal{M}_{\delta}$ be as in the preceding Lemma, and let $f, g: \R^n\to\R$
be convex functions. For every $\delta>0$, the function
$\mathcal{M}_{\delta}(f,g):\R^n\to\R$ has the following properties:
\begin{enumerate}
\item $\mathcal{M}_{\delta}(f,g)$ is convex.
\item If $f, g$ are of class $C^k$, then so is $\mathcal{M}_{\delta}(f,g)$.
\item $\mathcal{M}_{\delta}(f,g)=f$ if $f\geq g+\delta$.
\item $\mathcal{M}_{\delta}(f,g)=g$ if $g\geq f+\delta$.
\item $\max\{f,g\}\leq \mathcal{M}_{\delta}(f,g)\leq \max\{f,g\} + \delta/2$.
\item $\mathcal{M}_{\delta}(f,g)=\mathcal{M}_{\delta}(g, f)$.
\item $\textrm{Lip}(\mathcal{M}_{\delta}(f,g)_{|_B})\leq \max\{ \textrm{Lip}(f_{|_B}), \textrm{Lip}(g_{|_B}) \}$ for every ball $B\subset \R^n$.
\item If $f_1\leq f_2$ and $g_1\leq g_2$ then $\mathcal{M}_{\delta}(f_1, g_1)\leq \mathcal{M}_{\delta}(f_2, g_2)$.
\item If $f, g\in C^{2}(\R^n)$ then, for each ball $B\subset\R^n$,
\begin{multline*}
\sup_{x\in B}\|D^2 \mathcal{M}_{\delta}(f,g)(x)\| \\ \leq C_{\delta} \left( \sup_{x\in B}\|D^2 f(x)\|+ \sup_{x\in B}\|D^2 g(x)\| +\left(\textrm{Lip}(f_{|_B})+\textrm{Lip}(g_{|_B})\right)^2\right),
\end{multline*}
where $C_{\delta}>0$ is a constant depending only on $\delta$.
\end{enumerate}
\end{prop}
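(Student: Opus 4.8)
The plan is to work throughout with the explicit representation $\mathcal{M}_{\delta}(f,g)=\tfrac12\bigl(f+g+\theta_{\delta}(f-g)\bigr)$ obtained in the proof of Lemma~\ref{smooth maxima}, where $\theta_{\delta}\in C^{\infty}(\R)$ is convex and symmetric, satisfies $\textrm{Lip}(\theta_{\delta})=1$, and coincides with $t\mapsto|t|$ for $|t|\ge\delta$. From these facts I will use repeatedly that $|\theta_{\delta}'|\le1$ on all of $\R$ (so $\tfrac{1+\theta_{\delta}'}{2}$ and $\tfrac{1-\theta_{\delta}'}{2}$ are nonnegative and add up to $1$) and that $\theta_{\delta}''$ vanishes off $[-\delta,\delta]$, hence is bounded on $\R$ by a constant $C_{\delta}'$ depending only on $\delta$. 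Here $\mathcal{M}_{\delta}(f,g)(x)$ always means $\mathcal{M}_{\delta}\bigl(f(x),g(x)\bigr)$.

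Properties $(1)$--$(6)$ and $(8)$ I would deduce directly from the pointwise behaviour of $\mathcal{M}_{\delta}:\R^{2}\to\R$. First, $\partial_{x}\mathcal{M}_{\delta}=\tfrac12\bigl(1+\theta_{\delta}'(x-y)\bigr)\ge0$ and $\partial_{y}\mathcal{M}_{\delta}=\tfrac12\bigl(1-\theta_{\delta}'(x-y)\bigr)\ge0$, so $\mathcal{M}_{\delta}$ is convex and nondecreasing in each variable; since a convex, coordinatewise nondecreasing function of convex functions is convex, $(1)$ follows, and $(8)$ follows at once from the coordinatewise monotonicity. Property $(2)$ is the chain rule, $\mathcal{M}_{\delta}$ being $C^{\infty}$. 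For $(3)$ and $(4)$, if $f\ge g+\delta$ then $f-g\ge\delta$, so $\theta_{\delta}(f-g)=f-g$ and the formula gives $\mathcal{M}_{\delta}(f,g)=f$; the case $g\ge f+\delta$ is symmetric. Finally $(5)$ is part $(2)$ of Lemma~\ref{smooth maxima} applied at each point, and $(6)$ is part $(4)$ of that lemma.

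For $(7)$ I would differentiate to get, for $C^{1}$ convex $f,g$,
\[
\nabla\mathcal{M}_{\delta}(f,g)=\tfrac{1+\theta_{\delta}'(f-g)}{2}\,\nabla f+\tfrac{1-\theta_{\delta}'(f-g)}{2}\,\nabla g,
\]
a convex combination of $\nabla f$ and $\nabla g$, whence $|\nabla\mathcal{M}_{\delta}(f,g)|\le\max\{|\nabla f|,|\nabla g|\}$ pointwise; since on the convex ball $B$ the Lipschitz constant of a locally Lipschitz function equals the essential supremum of its gradient norm over $B$, this gives $(7)$. The one point requiring care is that general convex $f,g$ need not be differentiable; one either argues at the full-measure set of points where both $f$ and $g$ are differentiable, or smooths $f$ and $g$ by convolution and passes to the limit (the resulting function $\mathcal{M}_{\delta}(f,g)$ is convex, hence locally Lipschitz, so this is harmless).

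The only genuinely computational step is $(9)$. Differentiating the explicit formula twice yields
\begin{multline*}
D^{2}\mathcal{M}_{\delta}(f,g)=\tfrac12\Bigl(D^{2}f+D^{2}g+\theta_{\delta}''(f-g)\,(\nabla f-\nabla g)\otimes(\nabla f-\nabla g)\\
+\,\theta_{\delta}'(f-g)\,(D^{2}f-D^{2}g)\Bigr).
\end{multline*}
Taking norms and using $|\theta_{\delta}'|\le1$, $|\theta_{\delta}''|\le C_{\delta}'$, and $\|(\nabla f-\nabla g)\otimes(\nabla f-\nabla g)\|=|\nabla f-\nabla g|^{2}\le(|\nabla f|+|\nabla g|)^{2}$, together with $\sup_{B}|\nabla f|\le\textrm{Lip}(f_{|_B})$ and $\sup_{B}|\nabla g|\le\textrm{Lip}(g_{|_B})$, I obtain
\[
\sup_{x\in B}\|D^{2}\mathcal{M}_{\delta}(f,g)(x)\|\le\sup_{B}\|D^{2}f\|+\sup_{B}\|D^{2}g\|+\tfrac{C_{\delta}'}{2}\bigl(\textrm{Lip}(f_{|_B})+\textrm{Lip}(g_{|_B})\bigr)^{2},
\]
which is the asserted bound with $C_{\delta}=\max\{1,C_{\delta}'/2\}$. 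I do not anticipate any real obstacle: the whole proof is a matter of exploiting the fixed one-dimensional profile $\theta_{\delta}$ and its uniform bounds, the only things to watch being the differentiability caveat in $(7)$ and making sure every constant that appears depends on $\delta$ alone.
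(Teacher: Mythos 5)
Your argument for $(9)$, the only item the paper actually proves, is essentially the paper's computation: the paper restricts to a line $\gamma(t)=x+tv$ and differentiates $t\mapsto\mathcal{M}_{\delta}(f(\gamma(t)),g(\gamma(t)))$ twice, whereas you differentiate $D^2\mathcal{M}_{\delta}(f,g)$ directly in $\R^n$; the resulting formula and bound are the same. For $(1)$--$(8)$ the paper simply cites \cite{Azagra2013}, so your explicit derivations from the profile $\theta_{\delta}$ are added detail rather than a different route; they are correct, but for $(7)$ you could avoid the mollification caveat for nonsmooth convex $f,g$ by a direct mean-value argument on the two-variable function $\mathcal{M}_{\delta}(a,b)=\tfrac12\bigl(a+b+\theta_{\delta}(a-b)\bigr)$: for any $(a_1,b_1),(a_2,b_2)$ one has $\mathcal{M}_{\delta}(a_1,b_1)-\mathcal{M}_{\delta}(a_2,b_2)=\tfrac{1+\theta_{\delta}'(\xi)}{2}(a_1-a_2)+\tfrac{1-\theta_{\delta}'(\xi)}{2}(b_1-b_2)$, a convex combination since $|\theta_{\delta}'|\le1$, which yields $(7)$ for arbitrary (not necessarily differentiable) $f,g$ without passing to a limit.
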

\begin{proof}
See \cite{Azagra2013} for properties $(1)-(8)$.
To check $(9)$, it is sufficient to see that the function
$t\mapsto \mathcal{M}_{\delta}(f,g)(\gamma(t))$ has a suitably bounded
second derivative, where $\gamma(t)=x+tv$ with $\|v\|=1$. So, by replacing $f, g$
with $f(\gamma(t))$ and $g(\gamma(t))$ we can assume that $f$ and
$g$ are defined on an interval $I\subseteq\R$. In this case we easily compute
\begin{multline*}
\frac{d^2}{dt^2}\mathcal{M}_{\delta}(f(t), g(t))=
\frac{ \left(1+\theta'(f(t)-g(t))\right) f''(t) +
\left(1-\theta'(f(t)-g(t))\right) g''(t)}{2} \\
 + \frac{\theta''(f(t)-g(t)) \left( f'(t)-g'(t)\right)^{2}}{2},
\end{multline*}
and the estimate of $(9)$ follows immediately.
\end{proof}

Now we can prove Theorem \ref{Corollary 2 to the main coercive result}. For each $k\in\N$, we denote
$
B_k:=B(0, k).
$
By the main result of \cite{Azagra2013}, we may find a $C^{\infty}$ convex function $\psi:\R^n\to\R$ such that
\begin{equation}\label{psi approximates m}
m(x)\leq \psi(x)\leq m(x)+\frac{1}{2} \textrm{ for all } x\in\R^n
\end{equation}
where $m(x):=\sup_{z\in E}\{f(z)+\langle G(z), x-z\rangle\}$.
In particular $\psi\in C^{1,1}_{\textrm{loc}}(\R^n)$. Next, for each $y\in E$, we define $k(y)$ as the first positive integer $k$ such that $y\in B_{k}$, and the function $\varphi_{y}:\R^n\to\R$ by
$$
\varphi_{y}(x)=
\begin{cases}
 A_{k(y)}|x-y|^2 & \textrm{ if } x\in B_{3k(y)}, \\
 \mathcal{M}_{1/8}\left( A_{k(y)}|x-y|^2 , \, \psi(x)-f(y)-\langle G(y), x-y\rangle\right) & \textrm{ if } x\notin B_{3k(y)},
\end{cases}
$$
where $\mathcal{M}_{1/8}$ is the smooth maximum $\mathcal{M}_{\delta}$ of Lemma \ref{smooth maxima} with $\delta=1/8$, and the numbers $A_k\geq 2$ are given by condition \eqref{every tangent function lies above all tangent planes corollary 2}.
By replacing $A_k$ with $\max_{1\leq j\leq k}A_j$ if necessary, we may assume that
\begin{equation}
A_{k}\leq A_{k+1} \textrm{ for all } k\in\N.
\end{equation}
Note that, if $x\in B_{4k(y)}\setminus B_{2k(y)}$ then  $|x-y|\geq 1$, so we have, using \eqref{every tangent function lies above all tangent planes corollary 2}, that
\begin{multline*}
A_{k(y)} |x-y|^2\geq \frac{1}{2}A_{k(y)} |x-y|^2 +\frac{1}{2}A_{k(y)} 
\geq m(x)-f(y)-\langle G(y), x-y\rangle + \frac{1}{2}A_{k(y)}\\
\geq \psi(x)-\frac{1}{2} -f(y)-\langle G(y), x-y\rangle + \frac{1}{2}A_{k(y)}
\geq \psi(x)-f(y)-\langle G(y), x-y\rangle +\frac{1}{8},
\end{multline*}
which implies, by Proposition \ref{properties of M(f,g)}(3), that 
$$
\mathcal{M}_{1/8}\left( A_{k(y)}|x-y|^2 , \, \psi(x)-f(y)-\langle G(y), x-y\rangle\right)=A_{k(y)} |x-y|^2
$$
for all $x\in B_{4k(y)}\setminus B_{2k(y)}$.
We then easily deduce (bearing in mind the definition of $\varphi$ and Proposition \ref{properties of M(f,g)}) that $\varphi_{y}\in C^{\infty}(\R^n)$ and $\varphi_y$ is convex.

Let us see that the $1$-jet $(f(y), G(y))_{y\in E}$, together with the family $\{\varphi_{y}\}_{y\in E}$, satisfy the properties of Theorem \ref{Corollary to C11 Whitney thm for coercive convex functions}.
Property \eqref{phiy equals 0 at y} is obvious. Let us check property \eqref{every tangent function lies above all tangent planes corollary}. Given a point $y\in E$, recall that $k(y)$ is the first $k\in\N$ such that $y\in B_k$. If $x\in B_{3k(y)}$ then by the definitions of $A_{k(y)}$ and $\varphi_y$ we have
$$
m(x)-f(y)-\langle G(y), x-y\rangle\leq \frac{1}{2}A_{k(y)} |x-y|^2 \leq 
A_{k(y)}|x-y|^2=\varphi_y(x).
$$
On the other hand, if $x\notin B_{3k(y)}$ then
\begin{eqnarray*}
& & \varphi_y (x)=\mathcal{M}_{1/8}\left( A_{k(y)}|x-y|^2 , \, \psi(x)-f(y)-\langle G(y), x-y\rangle\right) \\
& & \geq \psi(x)-f(y)-\langle G(y), x-y\rangle\geq 
m(x)-f(y)-\langle G(y), x-y\rangle,
\end{eqnarray*}
where we have used Proposition \ref{properties of M(f,g)}(5) for the first inequality and \eqref{psi approximates m} for the second one. In either case we see that
\begin{equation}\label{tangent functions are above f}
m(x) \leq f(y)+\langle G(y), x-y\rangle + \varphi_y(x) \textrm{ for all } y\in E, x\in\R^n,
\end{equation}
which is equivalent to condition \eqref{every tangent function lies above all tangent planes corollary}.

Let us now verify \eqref{finite sup of Lip constants of phiy on balls corollary}. Since $\varphi_{y}\in C^{2}(\R^n)$ for every $y\in E$, this amounts to showing that 
$$
\sup\{ \|D^{2}\varphi_y (x)\| \, : \, x\in B(0,R), y\in E\cap B(0,R)\}<\infty \textrm{ for every } R>0,
$$
or equivalently for every $R\in\N$.
Given $R\in\N$ and $y\in E\cap B(0, R)$, note that $k(y)\leq R$. If $x\in B(0, R)\cap B_{3k(y)}$ then
\begin{equation}\label{estimate for the second derivative of phiy on balls 3k}
\|D\varphi_{y}(x)\|=2A_{k(y)}.
\end{equation}
On the other hand, if $x\in B(0, R)$ then 
\begin{equation}
\| D^2_{x}\left( \psi(x)-f(y)-G(y), x -y\rangle\right)\|=\|D^{2}\psi(x)\|\leq
\sup_{z\in B(0, R)}\|D^2\psi(z)\|.
\end{equation}
Using these estimates with Proposition \ref{properties of M(f,g)}(9) we obtain that
\begin{eqnarray*}
& & \sup_{x\in B(0, R)}\|D^2_{x}\mathcal{M}_{1/8}\left( A_{k(y)}|x-y|^2 , \, \psi(x)-f(y)-\langle\nabla f(y), x-y\rangle\right)\| \\
& & \leq
C_{1/8}\left( 2A_{k(y)} + \sup_{z\in B(0, R)}\|D^2\psi(z)\|+\left(\textrm{Lip}\left(\psi_{|_{B(0, R)}}\right) +2A_{k(y)}\right)^2\right),
\end{eqnarray*}
and by combining this inequality with \eqref{estimate for the second derivative of phiy on balls 3k}, and bearing in mind the definition of $\varphi_y$ and the facts that $k(y)\leq R$ and the sequence $\{A_k\}$ is increasing, we obtain
\begin{equation}
\sup_{x\in B(0, R)}\|D^2\varphi_y(x)\| \leq C \left( A_R + \sup_{z\in B(0, R)}\|D^2\psi(z)\|+\left(\textrm{Lip}\left(\psi_{|_{B(0, R)}}\right) +A_R\right)^2\right),
\end{equation}
where $C$ is an absolute constant. This shows \eqref{finite sup of Lip constants of phiy on balls corollary}.
Thus we have proved the sufficiency part of Theorem \ref{Corollary 2 to the main coercive result}.

The necessity part is obvious: just take $A_k=\textrm{Lip}\left( (\nabla F)_{|_{B(0, 4k)}}\right)$.
\qed

\medskip

\subsection{Proof of the extension properties of \eqref{extension formula for G bounded 1}.} We keep denoting $B_k=B(0, k)$. Given $y\in E$, if $x\in B_{2k(y)}$ then, by condition \eqref{every tangent function lies above all tangent planes corollary 2} we have
$$
m(x)\leq f(y)+\langle G(y), x-y\rangle +\frac{A_{k(y)}}{2}|x-y|^2.
$$
On the other hand, if $x\notin B_{2k(y)}$, then $|x-y|\geq 1$, and, observing that $m(y)=f(y)$ and $\textrm{Lip}(m)=\|G\|_{\infty}$, we have
\begin{eqnarray*}
& & m(x)-f(y)-\langle G(y), x-y\rangle = m(x)-m(y) + \langle G(y), x-y\rangle\\
& &\leq 2\|G\|_{\infty} |x-y|\leq 2\|G\|_{\infty} |x-y|^2.
\end{eqnarray*}
In either case we have
$$
m(x) \leq f(y)+\langle G(y), x-y\rangle + \left( \frac{A_{k(y)}}{2} + 2\|G\|_{\infty}\right)|x-y|^2 \textrm{ for all } x\in\R^n, y\in E,
$$
that is condition \eqref{every tangent function lies above all tangent planes corollary} is satisfied with $\varphi_{y}(x)= \left( \frac{A_{k(y)}}{2} + 2\|G\|_{\infty}\right)|x-y|^2$. It is clear that the rest of the conditions of Theorem \ref{Corollary to C11 Whitney thm for coercive convex functions} are met as well, and by taking $a=1/2$ it follows that \eqref{extension formula for G bounded 1} is an extension of the jet $(f, G)$.   \qed

\medskip

\subsection{Proof of Theorem \ref{MainTheorem with P}: necessity.} 
Assume that there exists a convex function $F\in C^{1,1}_{\textrm{loc}}(\R^n)$ such that $F=f$, $\nabla F=G$ on $E$, and $X=X_{F}=\textrm{span}\{\nabla F(y)-\nabla F(z) : y, z\in \R^n\}$. By Theorem \ref{rigid global behavior of convex functions}, there is a unique $C^{1,1}_{\textrm{loc}}$ convex function $c:X\to\R$ and a unique vector $v\in X^{\perp}$ such that we have the decomposition 
\begin{equation}\label{decomposition of F in necessity main thm}
F=c\circ P+\langle v, \cdot\rangle,
\end{equation}
which implies 
\begin{equation}\label{decomposition of the gradient of F in necessity main thm}
\nabla F=\nabla c\circ P +v.
\end{equation}
Let us check that properties $(i)-(iii)$ of Theorem \ref{MainTheorem with P} are satisfied for $f=F_{|_E}$ and $G=(\nabla F)_{|_E}$.

$(i)$: This is obvious.

$(ii)$: Assume that $Y:=\textrm{span}\{\nabla F(x)-\nabla F(y) : x, y\in E\}$ is strictly contained in $X$. With $k$ and $d$ denoting the dimensions of $Y$ and $X$ respectively, we can find points $x_0,x_1, \ldots, x_k \in E$ such that $Y =\textrm{span} \lbrace \nabla F(x_j)-\nabla F(x_0) \: : \: j=1, \ldots, k \rbrace.$ Then there must exist $p_1 \in \R^n$ such that $\nabla F(p_1)-\nabla F(x_0) \notin Y$ (otherwise we would have that $\nabla F(p)-\nabla F(x_0) \in Y$ for all $p\in \R^n,$ which implies that
$$
\nabla F(p)-\nabla F(q)=(\nabla F(p)-\nabla F(x_0)) - (\nabla F(q)-\nabla F(x_0)) \in Y, \quad \text{for all} \quad p,q \in \R^n,
$$
contradicting that $X \neq Y$). Then the subspace $Y_1$ spanned by $Y$ and the vector $\nabla F(p_1)-\nabla F(x_0) $ has dimension $k+1.$ If $d=k+1,$ we are done. Otherwise we repeat this argument and by induction we obtain points $p_1, \ldots ,p_{d-k} \in \R^n$ such that the set $ \lbrace \nabla F(p_j)- \nabla F(x_0) \rbrace_{j=1}^{d-k}$ is linearly independent and $X = Y \oplus \textrm{span}\lbrace \nabla F(p_j)- \nabla F(x_0) \: : \: j=1, \ldots ,d-k \rbrace$, concluding that
$
X=\textrm{span}\left\{u-w : u, w\in \nabla F(E^{*}) \right\},
$
where $E^{*}=E\cup\{p_1, ..., p_{d-k}\}$.

Now let us define, for each $y\in E^{*}$, the function $\varphi_y :X\to\R$ by
\begin{equation}\label{defn of varphiy in necessity main thm}
\varphi_y(x)=c(x)-c(P(y))-\langle \nabla c(P(y)), x-P(y)\rangle,
\end{equation}
where $c$ is as in \eqref{decomposition of F in necessity main thm}. It is clear that $\varphi_y$ satisfies \eqref{phiy equals 0 at y corollary}, which because $\varphi_y$ is convex implies that $\varphi_y(x)\geq 0$ for all $x\in X$. For each $y\in E^{*}$, let us denote $t_y=F(y)$ and $\xi_y=\nabla F(y)$. Note that, as $\nabla c(P(y))\in X$, we have, for every $x\in\R^n$, $y\in E^{*}$,
\begin{multline*}
F(x)-F(y)-\langle \nabla F(y), x-y\rangle \\ = c(P(x))+\langle v, x\rangle-c(P(y))-\langle v, y\rangle -\langle \nabla c(P(y)) +v, x-y\rangle \\
=c(P(x))-c(P(y))-\langle \nabla c(P(y)), x-y\rangle
 \\ = c(P(x))-c(P(y))-\langle \nabla c(P(y)), P(x-y)\rangle
 =\varphi_y(P(x)).
\end{multline*}
Therefore, since $F$ is convex and $(F, \nabla F)=(f, G)$ on $E$, and by the definition of $t_y, \xi_y$, we have, for every $x\in \R^n$, $y\in E^{*}$,
\begin{eqnarray*}
& & t_{y}+\langle\xi_y, x-y\rangle +\varphi_{y}(P(x))=F(x)=
\sup_{z\in\R^n}\{F(z)+\langle\nabla F(z), x-z\rangle\}\\
& &\geq \sup_{z\in E^{*}}\{F(z)+\langle\nabla F(z), x-z\rangle\} =: m^{*}(x),
\end{eqnarray*}
so \eqref{every tangent function lies above all tangent planes MainthmwithP} holds true. Finally, since $\nabla\varphi_y(u)=\nabla c(u)-\nabla c(P(y))$ and $c\in C^{1,1}_{\textrm{loc}}(X)$, we have that
\begin{eqnarray*}
& &\sup\left\{ \frac{\nabla \varphi_y(u)-\nabla\varphi_y(w)}{|u-w|} : y\in E\cap P^{-1}(B_X(0,R)), u, w\in B_{X}(0,R), u\neq w \right\} \\
=
& & \sup\left\{ \frac{\nabla c(u)-\nabla c(w)}{|u-w|} : u, w\in B_{X}(0,R), u\neq w \right\}=\textrm{Lip}\left( (\nabla c)_{|_{B_X(0,R)}}\right)<\infty
\end{eqnarray*}
for each $R>0$, and \eqref{finite sup of Lip constants of phiy on balls main thm} is satisfied as well. 

$(iii)$ In this case there is no need to add new data, and the same proof works with $E^{*}=E$.
\qed

\medskip

\subsection{Proof of Theorem \ref{MainTheorem with P}: sufficiency.}
Consider the function
$$
m^{*}(x) :=\sup_{y\in E^{*}}\{t_y +\langle\xi_y, x-y\rangle\}.
$$
\begin{lem}
The function $m^{*}:\R^n\to\R$ is  well defined, convex, and satisfies
$$
m^{*}(y)=t_y \,\,\, \textrm{ and } \xi_y\in\partial m^{*}(y) \, \textrm{ for every } y\in E^{*}.
$$
and, with the notation of Theorem \ref{rigid global behavior of convex functions},
$X=X_{m^{*}}$.
\end{lem}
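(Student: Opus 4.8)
The plan is to check the four assertions in order; all but the last are immediate from the definition of $m^{*}$ and the standing hypotheses, while for $X=X_{m^{*}}$ the point is that $m^{*}$ is affine in the directions orthogonal to $X$.

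First I would fix a point $y_0\in E^{*}$ (possible since $\emptyset\neq E\subseteq E^{*}$). Taking $z=y_0$ in the defining supremum gives $m^{*}(x)\geq t_{y_0}+\langle\xi_{y_0},x-y_0\rangle$, so $m^{*}(x)>-\infty$; and \eqref{every tangent function lies above all tangent planes MainthmwithP} applied with this fixed $y_0$ gives $t_z+\langle\xi_z,x-z\rangle\leq t_{y_0}+\langle\xi_{y_0},x-y_0\rangle+\varphi_{y_0}(P(x))$ for every $z\in E^{*}$, whence $m^{*}(x)\leq t_{y_0}+\langle\xi_{y_0},x-y_0\rangle+\varphi_{y_0}(P(x))<\infty$. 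Thus $m^{*}$ is well defined and finite, and as a supremum of affine functions it is convex (hence continuous). For $y\in E^{*}$, taking $z=y$ shows $m^{*}(y)\geq t_y$, while \eqref{every tangent function lies above all tangent planes MainthmwithP} with $x=y$ together with $\varphi_y(P(y))=0$ from \eqref{phiy equals 0 at y corollary} gives $t_z+\langle\xi_z,y-z\rangle\leq t_y$ for all $z\in E^{*}$, so $m^{*}(y)=t_y$. Finally, since $m^{*}(x)\geq t_y+\langle\xi_y,x-y\rangle=m^{*}(y)+\langle\xi_y,x-y\rangle$ for every $x$ directly from the definition, we get $\xi_y\in\partial m^{*}(y)$.

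For the identity $X=X_{m^{*}}$ the key step is to observe that for every $h\in X^{\perp}$ and every $y\in E^{*}$ one has $\langle\xi_y,h\rangle=\langle\xi_{y_0},h\rangle$: indeed $\xi_y-\xi_{y_0}$ belongs to $\textrm{span}\{\xi_a-\xi_b : a,b\in E^{*}\}=X$ by \eqref{completing the span of derivatives}, and is therefore orthogonal to $h$. Since moreover $P(x+h)=P(x)$, the definition of $m^{*}$ yields
\[
m^{*}(x+h)=\sup_{y\in E^{*}}\{t_y+\langle\xi_y,x-y\rangle\}+\langle\xi_{y_0},h\rangle=m^{*}(x)+\langle\xi_{y_0},h\rangle
\qquad(x\in\R^n,\ h\in X^{\perp}).
\]

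It then remains to compare with the description of $X_{m^{*}}$ in Theorem \ref{rigid global behavior of convex functions}. On the one hand, $\xi_y\in\partial m^{*}(y)$ for each $y\in E^{*}$, so each difference $\xi_y-\xi_z$ lies in $X_{m^{*}}=\textrm{span}\{u-w : u\in\partial m^{*}(x),\,w\in\partial m^{*}(x'),\,x,x'\in\R^n\}$, and \eqref{completing the span of derivatives} gives $X\subseteq X_{m^{*}}$. On the other hand, writing $Q:=I-P$ for the orthogonal projection onto $X^{\perp}$, if $\xi\in\partial m^{*}(x_1)$ then $m^{*}(x_1+h)\geq m^{*}(x_1)+\langle\xi,h\rangle$ for all $h\in X^{\perp}$, which by the displayed identity forces $\langle\xi_{y_0}-\xi,h\rangle\geq0$ for all $h\in X^{\perp}$; since $X^{\perp}$ is a subspace this means $Q\xi=Q\xi_{y_0}$. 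Hence all subgradients of $m^{*}$ share the same $X^{\perp}$-component, so every difference of two subgradients of $m^{*}$ lies in $X$, i.e.\ $X_{m^{*}}\subseteq X$. Combining the two inclusions gives $X=X_{m^{*}}$. The only step requiring any thought is the direction-orthogonal-to-$X$ identity above; the rest is a direct unwinding of the definitions.
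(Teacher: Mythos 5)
Your proof of the first four assertions is essentially identical to the paper's. The interesting divergence is in the proof of $X=X_{m^{*}}$, where you take a genuinely different and in fact cleaner route. The paper establishes $X\subseteq X_{m^{*}}$ by building a $k$-dimensional corner function $C\leq m^{*}$ (in the sense of \cite{Azagra2013}) to deduce that $m^{*}$ is essentially coercive in the direction of $X$, and then invokes the \emph{last} clause of Theorem \ref{rigid global behavior of convex functions}; and it proves $X_{m^{*}}\subseteq X$ by contradiction, taking $w\in X_{m^{*}}\setminus\{0\}$ with $w\perp X$ and showing $t\mapsto m^{*}(y_0+tw)$ is affine, contradicting essential coercivity of $m^{*}$ in the direction of $X_{m^{*}}$. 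You instead lean entirely on the subgradient description $X_{m^{*}}=\textrm{span}\{u-w : u\in\partial m^{*}(x),\,w\in\partial m^{*}(x')\}$: the inclusion $X\subseteq X_{m^{*}}$ becomes immediate from $\xi_y\in\partial m^{*}(y)$ and \eqref{completing the span of derivatives}, with no need for corner functions; and for $X_{m^{*}}\subseteq X$ you observe (as the paper also implicitly does) that $m^{*}(x+h)=m^{*}(x)+\langle\xi_{y_0},h\rangle$ for $h\in X^{\perp}$, but instead of deriving a coercivity contradiction you conclude directly that all subgradients of $m^{*}$ share the same $X^{\perp}$-component, so differences of subgradients lie in $X$. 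Both arguments ultimately invoke Theorem \ref{rigid global behavior of convex functions}, but yours uses only the explicit $\textrm{span}$-of-subgradient-differences characterization of $X_f$, avoiding the corner-function machinery and the reductio; the paper's version has the advantage of reusing the coercivity apparatus that drives other steps of the overall construction, but your version is more self-contained and arguably easier to read.
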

\begin{proof}
By \eqref{every tangent function lies above all tangent planes MainthmwithP} we have, for any $y_0\in E$,
$$
m^{*}(x)\leq t_{y_0}+\langle\xi_{y_0}, x-y_0\rangle +\varphi_{y_0}(P(x)),
$$
so it is clear that $m^{*}(x)<\infty$ for every $x\in\R^n$. Obviously $m^{*}$ is convex, and using that $\varphi_y(P(y))=0$ it is easily checked that $m^{*}(x)=t_y$, which immediately implies that $m^{*}(x)\geq t_{y}+\langle\xi_y, x-y\rangle$ for all $x\in\R^n$, that is, $\xi_y\in\partial m^{*}(x)$. 

Let us check that $X=X_{m^{*}}$. By assumption $X= \textrm{span} \left( \lbrace \xi_y-\xi_z \: : \: y,z \in E^* \rbrace \right).$ On the one hand, we have that $m^{*}$ is essentially coercive in the direction of $X$. Indeed, if $X=\{0\}$ then $m$ is affine and the result is obvious; therefore we can assume $\textrm{dim}(X)\geq 1$ and find points $y_0, y_1, \ldots, y_k\in E$ such that $\{v_1, \ldots, v_k\}$ is a basis of $X$, where
$$
v_j=G(y_j)-G(y_0), \,\,\, j= 1, \ldots, k.
$$
Then, with the terminology of \cite[Section 4]{Azagra2013},  we have that 
$$C(x)=\max\{ t_{y_0}+\langle\xi_{y_0}, x-y_0\rangle, \, t_{y_1}+\langle \xi_{y_1}, x-y_1\rangle, \ldots, \, t_{y_k}+\langle \xi_{y_k}, x-y_k\rangle\}
$$
is a {\em $k$-dimensional corner function} such that $C(x)\leq m^{*}(x)$ for all $x\in\R^n$. This implies that $C$ is essentially coercive in the direction of $X$, hence so is $m^{*}$, and by Theorem \ref{rigid global behavior of convex functions} we infer that $X\subseteq X_{m^{*}}$.
 
On the other hand, if $X_{m^{*}}\neq X$, we can take a vector $w\in X_{m^{*}}\setminus\{0\}$ with  $w\perp X$, and then we obtain, for all $t\in\R$,
\begin{eqnarray*}
& & m^{*}(y_0+tw)-t_{y_0}-\langle \xi_{y_0}, tw\rangle=	
\sup_{z\in E^{*}}\{ t_z- t_{y_0}+\langle \xi_{z}-\xi_{y_0}, tw\rangle +\langle \xi_z, y_0-z\rangle \} \\
& & =\sup_{z\in E^{*}}\{ t_z- t_{y_0} +\langle \xi_z, y_0-z\rangle\}=m^{*}(y_0)-t_{y_0}=0,
\end{eqnarray*}
hence the function $\R\ni t\mapsto m^{*}(x_0+tw)$ cannot be essentially coercive, contradicting the assumption that $w\in X_{m^{*}}$.
\end{proof}
By applying Theorem \ref{rigid global behavior of convex functions} to the function
$m^{*}$, and using the preceding lemma,
we can write
\begin{equation}
m^{*}=c^{*}\circ P +\langle v, \cdot\rangle.
\end{equation}
Now let us define $$E^{\flat}=P(E^{*})\subset X,$$ and $f^{\flat}:E^{\flat}\to\R$, $G^{\flat}:E^{\flat}\to\ X$ by
\begin{equation}\label{defn of fflat and Gflat}
f^{\flat}(z)=c^{*}(z), \quad \quad G^{\flat}(z)=\xi_{y}-v, \textrm{ where } y\in P^{-1}(z).
\end{equation}
Note that if $y, y' \in P^{-1}(z)$ then $\xi_{y}=\xi_{y'}$, as otherwise, according to Theorem \ref{rigid global behavior of convex functions} and the facts that $\xi_y\in\partial m^{*}(y)$ and $\xi_{y'}\in\partial m^{*}(y')$, the function $m^{*}$ would be essentially coercive in the direction $\textrm{span}\{y-y'\}$, which is perpendicular to  $X$, contradicting that $X_{m^{*}}=X$. Therefore the function $G^{\flat}$ is well defined.

Let us also define the functions $m^{\flat}, g^{\flat}:X\to\R$ by
$$
m^{\flat}=c^{*},
$$
and
$$
g^{\flat}(x)=\inf\left\{ f^{\flat}(z)+\langle G^{\flat}(z), x-z\rangle +\varphi_{y}(x)+a|x-z|^2 \, : \, z\in E^{\flat}, y\in P^{-1}(z)\right\}
$$
for all $x\in X$,
where $a>0$ is a given number.
\begin{lem}
We have that
$$
m^{\flat}(x)\leq g^{\flat}(x) \textrm{ for every } x\in X,
$$
and
$$
m^{\flat}(z)=f^{\flat}(z), \,\,\,  \{G^{\flat}(z)\}=\partial m^{\flat}(z)
 \textrm{ for every } z\in E^{\flat}.
$$
\end{lem}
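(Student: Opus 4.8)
The plan is to extract everything from condition \eqref{every tangent function lies above all tangent planes MainthmwithP}, the decomposition $m^{*}=c^{*}\circ P+\langle v,\cdot\rangle$, and the preceding lemma (which provides $m^{*}(y)=t_y$, $\xi_y\in\partial m^{*}(y)$, and $X_{m^{*}}=X$). Note first that the equality $m^{\flat}(z)=f^{\flat}(z)$ is just a restatement of the definitions $m^{\flat}=c^{*}$ and $f^{\flat}(z)=c^{*}(z)$, so only the inclusion $m^{\flat}\le g^{\flat}$ and the assertion $\partial m^{\flat}(z)=\{G^{\flat}(z)\}$ require an argument.

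I would begin with two bookkeeping facts. Since by Theorem \ref{rigid global behavior of convex functions} the vector $v$ equals $Q_{X}(\xi_0)$ for every subgradient $\xi_0$ of $m^{*}$, and $X_{m^{*}}=X$, we get $\xi_y-v=P(\xi_y)\in X$ for all $y\in E^{*}$, so that $G^{\flat}$ indeed takes values in $X$ and, for $u\in X$ and $z=P(y)$, $\langle G^{\flat}(z),u-y\rangle=\langle G^{\flat}(z),u-z\rangle$. Also, evaluating $m^{*}=c^{*}\circ P+\langle v,\cdot\rangle$ at $y$ and using $m^{*}(y)=t_y$ gives $c^{*}(z)=t_y-\langle v,y\rangle$ for $z=P(y)$. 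With these in hand, the identity $\langle\xi_y,u-y\rangle=\langle G^{\flat}(z),u-z\rangle+\langle v,u-y\rangle$ converts statements about $m^{*}$ on $\R^n$ into statements about $c^{*}=m^{\flat}$ on $X$. From $\xi_y\in\partial m^{*}(y)$, i.e. $m^{*}(u)\ge t_y+\langle\xi_y,u-y\rangle$, one gets
\[
c^{*}(u)\ge c^{*}(z)+\langle G^{\flat}(z),u-z\rangle\qquad(u\in X),
\]
which is exactly $G^{\flat}(z)\in\partial m^{\flat}(z)$; and from \eqref{every tangent function lies above all tangent planes MainthmwithP}, after taking the supremum over $z\in E^{*}$ and specializing $x=u\in X$ (so $P(u)=u$), i.e. $m^{*}(u)\le t_y+\langle\xi_y,u-y\rangle+\varphi_y(u)$, one gets
\[
c^{*}(u)\le f^{\flat}(z)+\langle G^{\flat}(z),u-z\rangle+\varphi_y(u)\qquad(u\in X).
\]
Since $\varphi_y\ge 0$ and $a|u-z|^{2}\ge 0$, adding these nonnegative quantities and taking the infimum over $z\in E^{\flat}$ and $y\in P^{-1}(z)$ yields $m^{\flat}(u)=c^{*}(u)\le g^{\flat}(u)$ for every $u\in X$.

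For the remaining claim $\partial m^{\flat}(z)=\{G^{\flat}(z)\}$ I would combine the two displayed inequalities: for $u\in X$ near $z$,
\[
0\le c^{*}(u)-c^{*}(z)-\langle G^{\flat}(z),u-z\rangle\le\varphi_y(u).
\]
Because $\varphi_y$ is of class $C^{1,1}_{\textrm{loc}}$ on $X$ with $\varphi_y(z)=0$ and $\nabla\varphi_y(z)=0$ (condition \eqref{phiy equals 0 at y corollary}), Taylor's theorem gives $\varphi_y(u)\le\tfrac12 L|u-z|^{2}$ on a neighbourhood of $z$, so the middle quantity is $o(|u-z|)$; hence $m^{\flat}=c^{*}$ is (Fr\'echet) differentiable at $z$ with gradient $G^{\flat}(z)$, and for a convex function differentiability at a point forces the subdifferential there to be the singleton consisting of the gradient.

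The calculations are all short, so the only real care needed is the bookkeeping: one must consistently separate the objects living in $\R^n$ (namely $m^{*}$, $\xi_y$, $v$, the points $y$) from those living in $X$ (namely $m^{\flat}=c^{*}$, $G^{\flat}$, $\varphi_y$, the points $z=P(y)$), and repeatedly use $G^{\flat}(z)\in X$ to replace $u-y$ by $u-z$ in inner products. Beyond that, the proof only invokes Theorem \ref{rigid global behavior of convex functions} (for the two bookkeeping facts) and the $C^{1,1}_{\textrm{loc}}$ regularity together with the vanishing first-order data of $\varphi_y$ at $z$ (for the squeeze producing differentiability).
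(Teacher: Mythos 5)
Your proof is correct and follows essentially the same route as the paper: both use $v\in X^{\perp}$ and $G^{\flat}(z)\in X$ to transfer the tangent-plane inequality \eqref{every tangent function lies above all tangent planes MainthmwithP} from $m^{*}$ on $\R^n$ to $m^{\flat}=c^{*}$ on $X$, obtain $m^{\flat}\le g^{\flat}$ by adding the nonnegative term $a|x-z|^2$ and taking the infimum, and then deduce $\partial m^{\flat}(z)=\{G^{\flat}(z)\}$ from the fact that the two-sided bound pinches to an equality at $x=z$ with the $C^1$ upper envelope $\varphi_y$ vanishing to first order there. Your version merely spells out more explicitly the two bookkeeping facts (that $v=Q_X(\xi_y)$, hence $\xi_y-v=P(\xi_y)\in X$, and that $c^{*}(z)=t_y-\langle v,y\rangle$) and the $o(|u-z|)$ squeeze that the paper leaves implicit.
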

\begin{proof}
Since $v\in X^{\perp}$, we have that $m^{*}(z)=c^{*}(z)$ for all $z\in X$, and this implies that $m^{\flat}(z)=f^{\flat}(z)$ whenever $z\in E^{\flat}$. On the other hand, using \eqref{every tangent function lies above all tangent planes MainthmwithP} and the facts that $G^{\flat}(z)\in X$ and $v\in X^{\perp}$, we have, for every $y\in P^{-1}(z)$, $z\in E^{\flat}$, and $x\in X$,
\begin{eqnarray*}
& & m^{\flat}(x)=c^{*}(x)=m^{*}(x)\leq t_y +\langle \xi_y, x-y\rangle +\varphi_y (Px)+ a|P(x-y)|^2 \\
& &= t_{y}+\langle v+G^{\flat}(z), x-y\rangle +\varphi_y(x)+ a|P(x-y)|^2 \\
& &= c^{*}(P(y))+\langle v, y\rangle +\langle v+G^{\flat}(z), x-y\rangle +\varphi_y(x) + a|P(x-y)|^2 \\
& &= c^{*}(z)+\langle G^{\flat}(z), x-y\rangle  + \varphi_y(x) + a|P(x-y)|^2 \\
& &= f^{\flat}(z)+\langle G^{\flat}(z), x-z\rangle + \varphi_y(x) + a|x-z|^2.
\end{eqnarray*}
By taking the infimum over such $z, y$, we obtain that $m^{\flat}(x)\leq g^{\flat}(x)$ for all $x\in X$. Since $m^{\flat}$ is convex, since the function $\varphi_y$ is differentiable, and the above inequality becomes an equality for $x=z\in E^{\flat}$, this inequality also shows that $m^{\flat}$ is differentiable at each $z\in E^{\flat}$, with $\nabla m^{\flat}(z)=G^{\flat}(z)$.
\end{proof}

Now we can repeat the steps of the proof of Theorem \ref{Corollary to C11 Whitney thm for coercive convex functions} with $m^{\flat}$ and $g^{\flat}$ in place of $m$ and $g$, respectively. As in that proof, \eqref{every tangent function lies above all tangent planes MainthmwithP} the preceding lemma implies that $f^{\flat}$ and $G^{\flat}$ are bounded on bounded sets.
By using Theorem \ref{rigid global behavior of convex functions} again, we also have that $m^{\flat}$ is essentially coercive on $X$ because
\begin{eqnarray*}
& & \textrm{span}\{\xi-\nu : \xi\in\partial m^{\flat}(z), \nu\in\partial m^{\flat}(z'), z, z'\in X\}\\
& & \supseteq
\textrm{span}\{G^{\flat}(z)-G^{\flat}(z') : z, z'\in E^{\flat}\}=
\textrm{span}\{\xi_y -\xi_{y'} : y, y'\in E\}=X.
\end{eqnarray*}
Thus, as in the proof of Theorem \ref{Corollary to C11 Whitney thm for coercive convex functions}, we may assume without loss of generality that 
\begin{equation}\label{mflat is assumed coercive}
\lim_{|x|\to\infty} m^{\flat}(x)=\infty, \,\,\, \textrm{ and } \,\,\, m^{\flat}(x)\geq 0 \textrm{ for all } x\in X,
\end{equation}
and we easily see that
\begin{equation}\label{gflat=fflat on E}
g^{\flat}(x) = m^{\flat}(x) = f^{\flat}(x) \textrm{ for all } x\in E^{\flat}.
\end{equation}
\begin{lem}\label{gflat restricts}
For every $R>0$ there exists $\eta=\eta(R)>0$ such that 
$$
g^{\flat}(x)=\inf\{ f^{\flat}(z)+\langle G^{\flat}(z), x-z\rangle +\varphi_{y}(x)+a|x-z|^{2} \, : \, z\in E^{\flat}\cap B_{X}(0, \eta), y\in P^{-1}(z)\}
$$
for all $x\in B_{X}(0, R)$.
\end{lem}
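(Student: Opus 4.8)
The plan is to follow the proof of Lemma \ref{if varphiy then varphitilde restricts} almost verbatim, after transporting everything to the subspace $X$. The mechanism is the same: the term $a|x-z|^{2}$ in the definition of $g^{\flat}$ penalizes points $z$ far from the origin, so that for $x$ ranging in a fixed ball the infimum defining $g^{\flat}$ is realized (in fact attained up to nothing) by points $z$ in a bounded region of $X$ whose size is controlled by $R$.

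First I would fix $R_{0}>0$ with $E^{\flat}\cap B_{X}(0,R_{0})\neq\emptyset$, choose $z_{0}\in E^{\flat}\cap B_{X}(0,R_{0})$ and some $y_{0}\in E^{*}\cap P^{-1}(z_{0})$, so that $P(y_{0})=z_{0}$ and, by \eqref{phiy equals 0 at y corollary}, $\varphi_{y_{0}}(z_{0})=0$ and $\nabla\varphi_{y_{0}}(z_{0})=0$. We may assume $R\geq R_{0}$. Let $L_{R}$ be the finite supremum in \eqref{finite sup of Lip constants of phiy on balls main thm}. Integrating $\nabla\varphi_{y_{0}}$ along the segment $[z_{0},x]\subset B_{X}(0,R)$ and using $\varphi_{y_{0}}(z_{0})=0=\nabla\varphi_{y_{0}}(z_{0})$ yields the quadratic bound $\varphi_{y_{0}}(x)\leq\tfrac12 L_{R}|x-z_{0}|^{2}\leq\tfrac12 L_{R}(R+R_{0})^{2}$ for all $x\in B_{X}(0,R)$, the analogue of \eqref{estimate for phiy0}.

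Next I would record the reformulation of \eqref{every tangent function lies above all tangent planes MainthmwithP} inside $X$. Repeating the intermediate computation from the proof of the lemma immediately preceding this one (which only uses $v\in X^{\perp}$, $G^{\flat}(z)\in X$, and $y-P(y)\in X^{\perp}$), one gets, for every $z\in E^{\flat}$, every $y\in E^{*}\cap P^{-1}(z)$, and every $x\in X$, the identity $t_{y}+\langle\xi_{y},x-y\rangle=f^{\flat}(z)+\langle G^{\flat}(z),x-z\rangle$, together with $\varphi_{y}(P(x))=\varphi_{y}(x)$. Applying \eqref{every tangent function lies above all tangent planes MainthmwithP} with $y_{0}$ playing the role of ``$z$'' and an arbitrary $y\in E^{*}$ playing the role of ``$y$'', and rewriting both sides via these identities, I obtain, for all $x\in X$ and all $y\in E^{*}$ (with $z:=P(y)$),
$$
f^{\flat}(z_{0})+\langle G^{\flat}(z_{0}),x-z_{0}\rangle\leq f^{\flat}(z)+\langle G^{\flat}(z),x-z\rangle+\varphi_{y}(x).
$$

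Finally, set $\eta=\eta(R):=R+(R+R_{0})\sqrt{1+L_{R}/(2a)}$, so that $\eta>R\geq R_{0}$ (hence $z_{0}\in B_{X}(0,\eta)$) and $a(\eta-R)^{2}=a(R+R_{0})^{2}+\tfrac12 L_{R}(R+R_{0})^{2}$. For $x\in B_{X}(0,R)$ and any $z\in E^{\flat}\setminus B_{X}(0,\eta)$ with $y\in E^{*}\cap P^{-1}(z)$, the displayed inequality together with $|x-z|\geq\eta-R$ gives
$$
f^{\flat}(z)+\langle G^{\flat}(z),x-z\rangle+\varphi_{y}(x)+a|x-z|^{2}\geq f^{\flat}(z_{0})+\langle G^{\flat}(z_{0}),x-z_{0}\rangle+a(\eta-R)^{2},
$$
and since $a(\eta-R)^{2}\geq a|x-z_{0}|^{2}+\varphi_{y_{0}}(x)$ by the choice of $\eta$ and the quadratic bound above, the right-hand side is at least $f^{\flat}(z_{0})+\langle G^{\flat}(z_{0}),x-z_{0}\rangle+\varphi_{y_{0}}(x)+a|x-z_{0}|^{2}$, i.e.\ at least the infimum defining $g^{\flat}(x)$ restricted to $z\in E^{\flat}\cap B_{X}(0,\eta)$. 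Thus discarding the pairs with $z\notin B_{X}(0,\eta)$ does not lower the infimum, which is exactly the assertion. I do not anticipate a genuine obstacle here; the one place to be careful is the bookkeeping of the orthogonal decomposition when passing from \eqref{every tangent function lies above all tangent planes MainthmwithP} on $\R^{n}$ to the inequality on $X$, and choosing $\eta$ large enough to absorb both $a|x-z_{0}|^{2}$ and $\varphi_{y_{0}}(x)$ simultaneously — everything else is a direct transcription of the proof of Lemma \ref{if varphiy then varphitilde restricts}.
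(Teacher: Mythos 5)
Your proof is correct and follows essentially the same route as the paper's: same choice of $z_0$, $y_0$, the same value of $\eta$ (since the paper's $\widetilde{M}_R$ equals your $L_R + 2a$, so $\sqrt{\widetilde{M}_R/2a} = \sqrt{1 + L_R/(2a)}$), and the same penalization mechanism via the $a|x-z|^2$ term. The only cosmetic difference is that you re-derive the key inequality $f^\flat(z_0)+\langle G^\flat(z_0),x-z_0\rangle\leq f^\flat(z)+\langle G^\flat(z),x-z\rangle+\varphi_y(x)$ directly from \eqref{every tangent function lies above all tangent planes MainthmwithP} via the orthogonality bookkeeping, whereas the paper reads it off from the preceding lemma's inequalities $m^\flat(x)\leq f^\flat(z)+\langle G^\flat(z),x-z\rangle+\varphi_y(x)$ and $m^\flat(x)\geq f^\flat(z_0)+\langle G^\flat(z_0),x-z_0\rangle$ (the latter from $G^\flat(z_0)\in\partial m^\flat(z_0)$); the two derivations are equivalent.
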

\begin{proof}
For each $y\in P^{-1}(z)$ with $z\in E^{\flat}$, we write
\begin{equation}\label{definition of varphiy tilde in main proof}
\widetilde{\varphi}_y(x)=\varphi_y(x)+a|x-z|^2.
\end{equation}
By using \eqref{finite sup of Lip constants of phiy on balls main thm} we see that
$$
\widetilde{M}_R:= 
\sup\left\{ \frac{\nabla {\widetilde{\varphi}}_y(u)-\nabla {\widetilde{\varphi}}(w)}{|u-w|} : y\in E\cap P^{-1}(B_X(0,R)), u, w\in B_{X}(0,R), u\neq w \right\}
$$
is finite for every $R>0$. Take $R_0>0$ so that $E^{\flat}\cap B_X(0, R_0)$ is nonempty, fix a point $z_0\in E^{\flat}\cap B_X(0, R_0)$, and for any given $R\geq R_0$ note that, for every $y_0\in P^{-1}(z_0)$,
$$
\widetilde{\varphi}_{y_0}(x) \leq \frac{(R+R_0)^2 \widetilde{M}_{R}}{2} \textrm{ for every } x\in B(0, R).
$$
Setting
$$
\eta=\eta(R):= R+ (R+R_0)\sqrt{\widetilde{M_R}/2a},
$$
we have that, for every $y\in P^{-1}(z)$ with $z\in E^{\flat}\setminus B_X(0, \eta)$ and every $x\in B_X(0,R)$,
\begin{eqnarray*}
& & f^{\flat}(z)+\langle G^{\flat}(z), x-z\rangle + \widetilde{\varphi}_y(x) \geq  f^{\flat}(z_0)+\langle G^{\flat}(z_0), x-z_0\rangle +a |x-z|^2 \\
& &\geq f^{\flat}(z_0)+\langle G^{\flat}(z_0), x-z_0\rangle + a\left(\eta-R\right)^2 \\
& &\geq f^{\flat}(z_0)+\langle G^{\flat}(z_0), x-z_0\rangle + \frac{(R+R_0)^2 \widetilde{M}_{R}}{2} \\
& &\geq f^{\flat}(z_0)+\langle G^{\flat}(z_0), x-z_0\rangle +\widetilde{\varphi}_{y_0}(x),
\end{eqnarray*}
which implies that the infimum in the definition of $g^{\flat}(x)$ can be restricted as stated.
\end{proof}

\begin{lem}\label{estimate for supdiff of gflat in main thm proof}
The function $g^{\flat}$ is locally Lipschitz, and for every $R>0$ there exists $C_R>0$ such that for every $x, h\in B(0,R)$ we have that
$$
g^{\flat}(x+h)+g^{\flat}(x-h)-2g^{\flat}(x)\leq C_R|h|^2.
$$
\end{lem}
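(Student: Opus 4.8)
The plan is to follow, essentially word for word, the proof of Lemma~\ref{g is locally Lip and satisfies locally uniform supdiff estimates}, working this time in the Euclidean space $X$ and carefully keeping track of the fact that the infimum defining $g^{\flat}$ ranges over pairs $(z,y)$ with $z\in E^{\flat}$ and $y\in P^{-1}(z)$. Both ingredients needed are already at hand: Lemma~\ref{gflat restricts} localizes the infimum to a bounded set, and condition \eqref{finite sup of Lip constants of phiy on balls main thm} (as used in the proof of Lemma~\ref{gflat restricts}) guarantees that, for every $\rho>0$,
$$
\widetilde{M}_{\rho}:=\sup\left\{\textrm{Lip}\big((\nabla\widetilde{\varphi}_y)_{|_{B_X(0,\rho)}}\big)\ :\ y\in E^{*}\cap P^{-1}(B_X(0,\rho))\right\}<\infty,
$$
where $\widetilde{\varphi}_y(x)=\varphi_y(x)+a|x-P(y)|^2$ is as in \eqref{definition of varphiy tilde in main proof} (finiteness holds because adding $2a(\cdot-P(y))$ to $\nabla\varphi_y$ changes the local Lipschitz constant of the gradient by at most $2a$). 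Note that, with $z=P(y)$, the expression inside the infimum defining $g^{\flat}$ is exactly $f^{\flat}(z)+\langle G^{\flat}(z),x-z\rangle+\widetilde{\varphi}_y(x)$.

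Fix $R>0$ and put $R':=\max\{2R,\eta(R)\}$, with $\eta(R)$ as in Lemma~\ref{gflat restricts} (so that $\eta(R)\geq R$ and $R'\geq 2R$). Given $x,h\in B_X(0,R)$ and $\varepsilon>0$, Lemma~\ref{gflat restricts} provides $z\in E^{\flat}\cap B_X(0,\eta(R))$ and $y\in P^{-1}(z)$ with
$$
g^{\flat}(x)\ge f^{\flat}(z)+\langle G^{\flat}(z),x-z\rangle+\widetilde{\varphi}_y(x)-\varepsilon .
$$
Bounding $g^{\flat}(x\pm h)$ from above by plugging the same pair $(z,y)$ into the definition of $g^{\flat}$, adding the two inequalities and subtracting twice the displayed lower bound for $g^{\flat}(x)$, the affine terms $\langle G^{\flat}(z),\cdot\rangle$ cancel and we are left with
$$
g^{\flat}(x+h)+g^{\flat}(x-h)-2g^{\flat}(x)\le \widetilde{\varphi}_y(x+h)+\widetilde{\varphi}_y(x-h)-2\widetilde{\varphi}_y(x)+2\varepsilon .
$$
Since $x,x\pm h\in B_X(0,2R)\subseteq B_X(0,R')$ and $y\in E^{*}\cap P^{-1}(B_X(0,R'))$, Taylor's theorem applied to $\widetilde{\varphi}_y$ (whose gradient is Lipschitz on $B_X(0,R')$ with constant $\le\widetilde{M}_{R'}$) shows that the right-hand side is $\le\widetilde{M}_{R'}|h|^2+2\varepsilon$. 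Letting $\varepsilon\to0$ we obtain the claimed estimate with $C_R:=\widetilde{M}_{R'}$.

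For the local Lipschitz property we keep the same near-optimal pair $(z,y)$ and estimate only the forward difference:
$$
g^{\flat}(x+h)-g^{\flat}(x)\le \langle G^{\flat}(z),h\rangle+\big(\widetilde{\varphi}_y(x+h)-\widetilde{\varphi}_y(x)\big)+\varepsilon
$$
for all $x,h\in B_X(0,R)$. Here $|\langle G^{\flat}(z),h\rangle|\le L_R|h|$ with $L_R:=\sup\{|G^{\flat}(w)|:w\in E^{\flat}\cap B_X(0,\eta(R))\}<\infty$, because $G^{\flat}$ is bounded on bounded sets (noted just before the statement of this lemma); moreover $\nabla\widetilde{\varphi}_y(z)=\nabla\varphi_y(z)=0$ by \eqref{phiy equals 0 at y corollary} together with $z=P(y)$, so $|\nabla\widetilde{\varphi}_y(x)|=|\nabla\widetilde{\varphi}_y(x)-\nabla\widetilde{\varphi}_y(z)|\le\widetilde{M}_{R'}|x-z|\le 2R'\widetilde{M}_{R'}$, whence $\widetilde{\varphi}_y(x+h)-\widetilde{\varphi}_y(x)\le 2R'\widetilde{M}_{R'}|h|+\tfrac12\widetilde{M}_{R'}|h|^2$. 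Thus $g^{\flat}(x+h)-g^{\flat}(x)\le (L_R+2R'\widetilde{M}_{R'})|h|+\tfrac12\widetilde{M}_{R'}|h|^2+\varepsilon$; letting $\varepsilon\to0$ and then taking $h=w-x$ for $x,w\in B_X(0,R/2)$ shows that $g^{\flat}$ is Lipschitz on $B_X(0,R/2)$, hence locally Lipschitz on $X$.

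There is no genuine obstacle in this argument; the only point requiring care is the bookkeeping with the double index in the infimum defining $g^{\flat}$ — one must select a \emph{single} near-optimal pair $(z,y)$ and reuse it for all three of $x$, $x+h$ and $x-h$ — together with the fact that everything takes place in the inner-product space $X$, so that $P(y)=z$ for $y\in P^{-1}(z)$ and the normalizations \eqref{phiy equals 0 at y corollary}--\eqref{finite sup of Lip constants of phiy on balls main thm} apply directly to the functions $\varphi_y$ on $X$.
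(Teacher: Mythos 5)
Your argument is correct and matches the paper's own proof almost line for line: the paper likewise fixes a single near-optimal pair $(z,y)$ from Lemma~\ref{gflat restricts}, reuses it for $x\pm h$, cancels the affine terms, and applies the Lipschitz bound on $\nabla\varphi_y$ from \eqref{finite sup of Lip constants of phiy on balls main thm} over the ball of radius $\max\{2R,\eta(R)\}$; your bundling of the quadratic term into $\widetilde{\varphi}_y$ (so the constant becomes $\widetilde{M}_{R'}$ rather than the paper's $K_R+2a$) is a purely cosmetic simplification, and your spelled-out forward-difference estimate for local Lipschitzness is the argument the paper delegates to the earlier Lemma~\ref{g is locally Lip and satisfies locally uniform supdiff estimates}.
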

\begin{proof}
Given $R>0$, we take $\eta(R)$ as in the preceding lemma, and we have that, if $x, h\in B_X(0,R)$, for any given $\varepsilon>0$ there exist $z\in B_X(0, \eta)$ and $y\in P^{-1}(z)$ such that
$$
g^{\flat}(x)\geq f^{\flat}(z)+\langle G^{\flat}(z), x-z\rangle+\varphi_y(x)-\varepsilon.
$$
Then, using the definition of $g$ and Taylor's theorem, we obtain
\begin{multline*}
g^{\flat}(x+h) +g^{\flat}(x-h)-2 g^{\flat}(x)
\leq f^{\flat}(z)+\langle G^{\flat}(z), x+h-y \rangle + \varphi_y(x+h)+a|x-z+h|^2 \\
\quad + f^{\flat}(z)+\langle G^{\flat}(z), x-h-y \rangle + \varphi_y(x-h)+ a|x-z-h|^2 \\
\quad -2 \left( f^{\flat}(z)+ \langle G^{\flat}(z), x-y \rangle +  \varphi_y(x) +a|x-z|^2\right) + 2 \varepsilon \\
 = \varphi_y(x+h) + \varphi_y(x-h)- 2\varphi_y(x) + 2a|h|^2+ 2 \varepsilon \\
 \leq  (K_R +2a) |h|^2 +2 \varepsilon,
\end{multline*}
where 
$K_R\in (0, \infty)$ is given by condition \eqref{finite sup of Lip constants of phiy on balls main thm} applied with $\max\{2R, \eta(R)\}$ in place of $R$.
Since $\varepsilon>0$ is arbitrary, by sending $\varepsilon$ to $0$ we obtain the desired estimate. One can also see that $g^{\flat}$ is locally Lipschitz as in the proof of Lemma \ref{g is locally Lip and satisfies locally uniform supdiff estimates}.
\end{proof}

Next let us define $F^{\flat}:X\to\R$ by
\begin{multline*}
F^{\flat}=\textrm{conv}_{X}(g^{\flat})= \\
\textrm{conv}_{X}\left( x\mapsto  
\inf_{z\in E^{\flat}, y\in P^{-1}(z)}\left\{ f^{\flat}(z)+\langle G^{\flat}(z), x-z\rangle +\varphi_y(x)+a|x-z|^2\right\}\right),
\end{multline*}
where $\textrm{conv}_{X}(\varphi)$ denotes the convex envelope of a function $\varphi:X\to\R$.
\begin{lem}
For every $R>0$ there exists $C'_R>0$ such that for every $x, h\in B(0,R)$ we have
$$
F^{\flat}(x+h)+F^{\flat}(x-h)-2F^{\flat}(x)\leq C'_R |h|^2.
$$
Therefore $F^{\flat}\in C^{1,1}_{\textrm{loc}}(X)$.
\end{lem}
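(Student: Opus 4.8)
The plan is to obtain this lemma directly from Lemma~\ref{the convex envelope is smooth}, applied with the Euclidean space $X$ (isometric to $\R^{d}$, $d=\dim X$) in place of $\R^{n}$ and with $g^{\flat}$ in place of $g$. The proof of Lemma~\ref{the convex envelope is smooth} uses only that the ambient space is finite-dimensional Euclidean, so it carries over verbatim; all that remains is to verify that $g^{\flat}:X\to\R$ meets the three hypotheses of that lemma.

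First, $g^{\flat}$ is continuous --- in fact finite-valued and locally Lipschitz --- by Lemma~\ref{estimate for supdiff of gflat in main thm proof}. Second, $g^{\flat}$ is coercive on $X$: we have already shown $m^{\flat}\leq g^{\flat}$, and after the normalization \eqref{mflat is assumed coercive} one has $\lim_{|x|\to\infty}m^{\flat}(x)=\infty$, hence $\lim_{|x|\to\infty}g^{\flat}(x)=\infty$. Third, the quadratic bound on second differences, $g^{\flat}(x+h)+g^{\flat}(x-h)-2g^{\flat}(x)\leq C_{R}|h|^{2}$ for all $x,h\in B_{X}(0,R)$, is precisely Lemma~\ref{estimate for supdiff of gflat in main thm proof}. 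With these three properties, Lemma~\ref{the convex envelope is smooth} furnishes, for each $R>0$, a constant $C'_{R}>0$ with $F^{\flat}(x+h)+F^{\flat}(x-h)-2F^{\flat}(x)\leq C'_{R}|h|^{2}$ for all $x,h\in B_{X}(0,R)$. Since $F^{\flat}=\textrm{conv}_{X}(g^{\flat})$ is convex, this inequality is equivalent to $F^{\flat}\in C^{1,1}_{\textrm{loc}}(X)$ --- with $\textrm{Lip}\big((\nabla F^{\flat})_{|_{B_{X}(0,R)}}\big)\leq C'_{R}$ --- by the same references quoted at the end of the proof of Lemma~\ref{the convex envelope is smooth}.

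There is no genuine difficulty here; the only point deserving a comment is the legitimacy of the normalization \eqref{mflat is assumed coercive}, which is what makes $g^{\flat}$ coercive. This is arranged exactly as in the proof of Theorem~\ref{Corollary to C11 Whitney thm for coercive convex functions}: by Theorem~\ref{rigid global behavior of convex functions} applied to $m^{\flat}$, together with the fact proved above that $m^{\flat}$ is essentially coercive on $X$, one may subtract from the data an affine function of $X$ plus a constant so that $m^{\flat}\geq 0$ and $\lim_{|x|\to\infty}m^{\flat}(x)=\infty$, without altering any of the conclusions to be proved. In short, this statement is simply the \emph{$P$-projected} counterpart of Lemma~\ref{the convex envelope is smooth}, carried out in the lower-dimensional space $X$.
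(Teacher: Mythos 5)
Your proof is correct and follows exactly the paper's route: apply Lemma~\ref{the convex envelope is smooth} with $X$, $g^{\flat}$, $F^{\flat}$ in place of $\R^n$, $g$, $F$. You simply spell out the hypothesis checks (continuity/local Lipschitzness and the second-difference bound from Lemma~\ref{estimate for supdiff of gflat in main thm proof}, coercivity from $m^{\flat}\leq g^{\flat}$ and the normalization \eqref{mflat is assumed coercive}) that the paper leaves implicit.
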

\begin{proof}
Use Lemma \ref{the convex envelope is smooth} with $X$, $g^{\flat}$, and $F^{\flat}$ in place of $\R^n$, $g$, and $F$.
\end{proof}
Since $m^{\flat}$ is convex, we have $m^{\flat} \leq F^{\flat} \leq g^{\flat}$ on $X$,
which together with \eqref{gflat=fflat on E} yields $F^{\flat}=f^{\flat}$ on $E$. By the same argument as in the proof of Theorem \ref{Corollary to C11 Whitney thm for coercive convex functions}, we also have $\nabla F^{\flat}(z) = G^{\flat}(z)$ for all $z\in E^{\flat}$. 

Finally, let us define $F:\R^n\to\R$ by
\begin{equation}\label{defn of F in proof of main thm}
F(x)=F^{\flat}(P(x))+\langle v, x\rangle.
\end{equation}
Note that, if $y\in E\subset E^{*}$ then
$$
F(y)=F^{\flat}(P(y))+\langle v, y\rangle=f^{\flat}(P(y))+\langle v, y\rangle = c^{*}(P(y))+\langle v, y\rangle =m^{*}(y)=t_y=f(y),
$$
and also, according to \eqref{defn of fflat and Gflat},
$$
\nabla F(y)=\nabla F^{\flat}(P(y))+v=G^{\flat}(P(y))+v=\xi_y=G(y).
$$
Therefore $(F, \nabla F)$ extends $(f, G)$ from $E$ to $\R^n$. 

Let us also see that $F$ agrees with the expression given by \eqref{formula for F in main thm}. To do so, we use the following fact, whose proof is simple and can be omitted.
\begin{lem}
If $P:\R^n\to X$ is an orthogonal projection and $\psi:X\to\R$ then
$$
\textrm{conv}_{\,\,\R^n}(\psi\circ P)=(\textrm{conv}_{\,X}(\psi))\circ P.
$$
\end{lem}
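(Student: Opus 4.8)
The plan is to establish the two inequalities $(\textrm{conv}_{X}\psi)\circ P\le \textrm{conv}_{\R^n}(\psi\circ P)$ and $\textrm{conv}_{\R^n}(\psi\circ P)\le(\textrm{conv}_{X}\psi)\circ P$ separately; each is elementary, and no serious difficulty is expected. For the inequality ``$\ge$'', I would simply note that $h:=(\textrm{conv}_{X}\psi)\circ P$ is convex on $\R^n$, being the composition of the convex function $\textrm{conv}_{X}\psi$ with the linear map $P$, and that $h\le\psi\circ P$ because $\textrm{conv}_{X}\psi\le\psi$ on $X$. Since, by definition \eqref{defn of convex envelope}, $\textrm{conv}_{\R^n}(\psi\circ P)$ is the supremum of all convex minorants of $\psi\circ P$, it dominates $h$, which is exactly the claimed inequality.

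For the inequality ``$\le$'', I would use the representation of the convex envelope as an infimum of convex combinations, i.e.\ the analogue of \eqref{convex envelope as the inf...} in each of the spaces $X$ and $\R^n$. Fix $x\in\R^n$ and decompose $x=P(x)+w$ with $w:=x-P(x)\in X^{\perp}$. Given $\varepsilon>0$, use the formula in $X$ to pick points $u_1,\dots,u_m\in X$ and weights $\lambda_i\ge0$ with $\sum_i\lambda_i=1$, $\sum_i\lambda_iu_i=P(x)$ and $\sum_i\lambda_i\psi(u_i)\le(\textrm{conv}_{X}\psi)(P(x))+\varepsilon$. Translate these points by $w$: set $x_i:=u_i+w\in\R^n$. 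Then $\sum_i\lambda_ix_i=P(x)+w=x$, and since $P$ kills $w$ we have $P(x_i)=u_i$, so $(\psi\circ P)(x_i)=\psi(u_i)$. Plugging this convex combination of $x$ into formula \eqref{convex envelope as the inf...} for $\R^n$ (Carath\'eodory's theorem lets us assume $m\le n+1$, padding with zero weights if necessary) gives $\textrm{conv}_{\R^n}(\psi\circ P)(x)\le\sum_i\lambda_i\psi(u_i)\le(\textrm{conv}_{X}\psi)(P(x))+\varepsilon$, and letting $\varepsilon\to0$ yields ``$\le$''.

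The only points requiring any care are the bookkeeping of the orthogonal component $w$ under $P$ and the harmless passage between convex combinations in $X$ and in $\R^n$; this is the closest thing to an obstacle, and it is routine. (If $(\textrm{conv}_{X}\psi)(P(x))=-\infty$ for some $x$, the very same computation shows $\textrm{conv}_{\R^n}(\psi\circ P)(x)=-\infty$ as well, so the identity still holds; in the situation where the lemma is applied one has $\psi=g^{\flat}\ge m^{\flat}\ge0$, so this degenerate case does not occur.) Combined with the definition \eqref{defn of F in proof of main thm} of $F$ and the definition of $g^{\flat}$, the lemma gives $F^{\flat}\circ P=\textrm{conv}_{X}(g^{\flat})\circ P=\textrm{conv}_{\R^n}(g^{\flat}\circ P)$, and hence, after absorbing the affine term $\langle v,\cdot\rangle$ into the convex envelope, it lets us recognize $F$ as the function given by formula \eqref{formula for F in main thm}, completing the proof of Theorem \ref{MainTheorem with P}.
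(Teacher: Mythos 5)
Your proof is correct. Note that the paper itself offers no proof of this lemma, remarking only that ``its proof is simple and can be omitted,'' so there is no argument in the text to compare against; your two inequalities (``$\ge$'' because $(\textrm{conv}_{X}\psi)\circ P$ is a convex minorant of $\psi\circ P$, and ``$\le$'' by lifting a near-optimal Carath\'eodory representation of $P(x)$ from $X$ to $\R^n$ via translation by the orthogonal component $w=x-P(x)$) supply exactly the kind of routine verification being alluded to, and every step checks out. (An equally short alternative, fitting the paper's emphasis on Fenchel biconjugates, is to compute $(\psi\circ P)^{*}$ directly: it equals $\psi^{*}$ on $X$ and $+\infty$ off $X$, whence $(\psi\circ P)^{**}=\psi^{**}\circ P$.)
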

Given $x\in \R^n$, $z\in E^{\flat}$, $y\in P^{-1}(z)$, we have
\begin{eqnarray*}
& & t_y +\langle \xi_y, x-y\rangle +\varphi_y (P(x))+ a|P(x-y)|^2 \\
& &= c^{*}(P(y))+\langle v, y\rangle +\langle v+G^{\flat}(z), x-y\rangle +\varphi_y(P(x)) + a|P(x)-z|^2 \\
& &=  f^{\flat}(z)+\langle G^{\flat}(z), P(x)-z\rangle +\langle v, x\rangle + \varphi_y(P(x)) + a|P(x)-z|^2.
\end{eqnarray*}
This implies that
\begin{eqnarray*}
\inf_{y\in E^{*}}\{t_y +\langle \xi_y, x-y\rangle +\varphi_y (P(x))+ a|P(x-y)|^2\}=
g^{\flat}(P(x))+\langle v, x\rangle,
\end{eqnarray*}
and by taking convex envelopes and using the preceding lemma we conclude that
\begin{multline*}
\textrm{conv}_{\R^n}\left( x\mapsto 
\inf_{y\in E^{*}}\{t_y +\langle \xi_y, x-y\rangle +\varphi_y (P(x))+ a|P(x-y)|^2\}\right) \\ =
F^{\flat}\circ P +\langle v, \cdot\rangle =F.
\end{multline*}
The proof of Theorem \ref{MainTheorem with P} is complete.
\qed

\medskip

\subsection{Proofs of Theorem \ref{First variant of MainTheorem with P}.} Up to replacing $m$ with $m^{*}$, using the projection $P$ whenever it is necessary, and some other trivial changes, the proofs of this result is the same as that of Theorem \ref{Corollary 2 to the main coercive result}. The details can be left to the reader.

\medskip

\subsection{Proof of Theorem \ref{MainTheorem with P for C1omegaloc}.} Up to replacing $|x|^2$ with $\theta(|x|)$, where $\theta(t):=\int_{0}^{t}\omega(s)ds$, and making some other obvious changes, the proof is the same as that of Theorem \ref{MainTheorem with P}. We leave it to the interested reader.

\medskip

\subsection{Proof of Theorem \ref{MainTheorem with P=I for C1}.} The proof of the sufficiency part follows the scheme of that of Theorem \ref{Corollary to C11 Whitney thm for coercive convex functions}, with some important changes which we next explain. 

We define the functions $m$ and $g$ as in the proof Theorem \ref{Corollary to C11 Whitney thm for coercive convex functions} (but recall that now $E$ is assumed to be closed and $f, G$ continuous). All the statements in that proof remain valid in our new setting until we arrive to \eqref{g=f on E}. At this point we need to replace Lemmas \ref{g is locally Lip and satisfies locally uniform supdiff estimates} and \ref{the convex envelope is smooth} with the following two lemmas.
\begin{lem}\label{gflat restricts in C1 proof}
For every $x\in X$ there exists some $\eta_x>0$ such that
$$
g(x)=\inf\{ f(y)+\langle G(y), x-y\rangle +\varphi_{y}(x)+a|x-y|^{2} \, : \, y\in E\cap B(0, \eta_x)\},
$$
and this infimum is attained.
\end{lem}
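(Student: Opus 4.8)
The plan is to prove the two assertions separately: that the infimum defining $g(x)$ localizes to a ball $B(0,\eta_x)$, and that the localized infimum is then attained. Here $X=\R^n$ (we are in the case $\textrm{span}\{G(x)-G(y):x,y\in E\}=\R^n$), and we keep the normalization already in force from the proof of Theorem~\ref{Corollary to C11 Whitney thm for coercive convex functions}: $m$ is a finite, nonnegative, coercive convex function, $g\geq m$ on $\R^n$, and $g=m=f$ on $E$. The two new ingredients, beyond what is inherited from that proof, are the continuity of $f$ and $G$ and the closedness of $E$.

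For the localization, fix $x\in X$ and write $T_y(x):=f(y)+\langle G(y),x-y\rangle+\varphi_y(x)+a|x-y|^2$, so that $g(x)=\inf_{y\in E}T_y(x)$. Taking the supremum over $z\in E$ in \eqref{every tangent function lies above all tangent planes corollary C1} gives $f(y)+\langle G(y),x-y\rangle+\varphi_y(x)\geq m(x)$ for all $y\in E$, whence
\[
T_y(x)\ \geq\ m(x)+a\,|x-y|^2\qquad(y\in E).
\]
Fix any $y_0\in E$ and put $V_0:=T_{y_0}(x)$; the displayed bound forces $|x-y_0|\leq r:=\big((V_0-m(x))_+/a\big)^{1/2}$. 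If $y\in E$ has $|x-y|>r$, then $T_y(x)>m(x)+ar^2=\max\{m(x),V_0\}\geq V_0=T_{y_0}(x)\geq g(x)$, so such $y$ are irrelevant to the infimum; hence $g(x)$ equals the infimum of $T_y(x)$ over the $y\in E$ with $|x-y|\leq r$, a set which contains $y_0$ and is contained in $E\cap B(0,\eta_x)$ for $\eta_x:=|x|+r+1$. Since $E$ is closed, $E\cap B(0,\eta_x)$ is compact (and nonempty), and $g(x)=\inf\{T_y(x):y\in E\cap B(0,\eta_x)\}$.

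It remains to see that this infimum over the compact set $E\cap B(0,\eta_x)$ is attained. The map $y\mapsto f(y)+\langle G(y),x-y\rangle+a|x-y|^2$ is continuous by continuity of $f$ and $G$, so it would suffice that $y\mapsto\varphi_y(x)$ be lower semicontinuous there. \textbf{This is the crux.} The family $\{\varphi_y\}_{y\in E}$ provided by the hypotheses of Theorem~\ref{MainTheorem with P=I for C1} carries no regularity whatsoever in the index $y$, and one can in fact build admissible families for which $y\mapsto\varphi_y(x)$ is badly discontinuous and $g(x)$ genuinely fails to be attained; so the proof must begin by replacing the given family with an admissible one depending (lower semi)continuously on $y$. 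The natural candidates are obtained by thickening the \emph{minimal} admissible profiles $x\mapsto m(x)-f(y)-\langle G(y),x-y\rangle$ — which are jointly continuous in $(x,y)$, nonnegative, and zero at $x=y$ — into functions $\varphi_y$ that are genuinely $C^1$ in $x$, still dominate $m(\cdot)-f(y)-\langle G(y),\cdot-y\rangle$, still have the prescribed $1$-jet $(0,0)$ at $x=y$, and now vary continuously with $y$; with such a family in force, $y\mapsto T_y(x)$ is lower semicontinuous on the compact index set and the infimum is attained. The real difficulty — and the step I expect to be the main obstacle — is carrying out this replacement so that $C^1$ regularity in $x$ (which naive infima or sup-/inf-convolutions in the $y$-variable destroy), the prescribed jet at $x=y$, the domination of the minimal profile, and continuity in $y$ all hold simultaneously; the localization argument above is routine by comparison.
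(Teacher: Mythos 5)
Your localization step is the same as the paper's: bound $T_y(x)=f(y)+\langle G(y),x-y\rangle+\varphi_y(x)+a|x-y|^2$ from below by $m(x)+a|x-y|^2$ using \eqref{every tangent function lies above all tangent planes corollary C1}, compare with $T_{y_0}(x)$ for a fixed $y_0\in E$, and conclude that the infimum restricts to a compact set $E\cap B(0,\eta_x)$. (The paper picks $\eta_x>|x|+(\widetilde\varphi_{y_0}(x))^{1/2}$, which is off by a factor of $a^{-1/2}$; your $r=((T_{y_0}(x)-m(x))_+/a)^{1/2}$ fixes that small slip but is otherwise the identical argument.)

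On attainment, you have in fact put your finger on a real soft spot. The paper's proof at this point consists of the single sentence ``Since the intersection of this ball with $E$ is compact and the functions involved are continuous, it is clear that the infimum is attained,'' with no argument that $y\mapsto\varphi_y(x)$ is lower semicontinuous on $E$ — and Theorem~\ref{MainTheorem with P=I for C1} genuinely imposes no regularity of $\varphi_y$ in the index $y$: it only asks that \emph{for each} $y$ some differentiable $\varphi_y$ exist. So your skepticism is warranted, and your observation that the minimal profiles $\psi_y(x):=m(x)-f(y)-\langle G(y),x-y\rangle$ are the objects that \emph{do} vary continuously in $y$ is the right place to look for a repair. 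What this means for the review, though, is that your proposal does not yet constitute a proof of the lemma: you stop at the point where the paper stops, namely by declaring that one must produce an admissible family that is at least lower semicontinuous in $y$ (differentiable in $x$, nonnegative, with the prescribed $1$-jet at $x=y$, dominating $\psi_y$) and then appealing to compactness — but you do not construct such a family, and you yourself identify that construction as the main obstacle. As it stands, then, your write-up is an accurate diagnosis of a gap that the paper's one-line justification does not close, rather than a completed alternative proof; to turn it into one you would need to actually carry out the replacement of $\{\varphi_y\}$ (or, equivalently, reformulate the lemma so that $g$ is defined via a family known a priori to be lower semicontinuous in $y$, and check that the downstream superdifferentiability lemma still goes through with that $g$).
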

\begin{proof}
Let us write
$
\widetilde{\varphi}_y(x)=\varphi_y(x)+a|x-y|^2.
$
Take a point $y_0\in E$ and a number $\eta_x>0$ such that $\eta_x>|x|+
(\widetilde{\varphi}_{y_0}(x))^{1/2}$. Then, if $y\in E\setminus B(0, \eta_x)$,
\begin{eqnarray*}
& & f(y)+\langle G(y), x-y\rangle + \widetilde{\varphi}_y(x) \geq  f(y_0)+\langle G(y_0), x-y_0\rangle +a |x-y|^2 \\
& & \geq f(y_0)+\langle G(y_0), x-z_0\rangle +\widetilde{\varphi}_{y_0}(x).
\end{eqnarray*}
This shows that the infimum defining $g(x)$ restricts to the ball $B(0, \eta_x)$. Since the intersection of this ball with $E$ is compact and the functions involved are continuous, it is clear that the infimum is attained.
\end{proof}
\begin{lem}
For every $x\in\R^n$ there exists $\xi_x\in\R^n$ such that 
\begin{equation}\label{g is superdifferentiable in proof of thm C1}
\limsup_{h\to 0}\frac{g(x+h)-g(x)-\langle \xi_x, h\rangle}{|h|}\leq 0.
\end{equation}
In particular $g$ is continuous.
\end{lem}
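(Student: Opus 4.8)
The plan is to exploit Lemma~\ref{gflat restricts in C1 proof}: the infimum defining $g(x)$ is actually \emph{attained}. Fix $x\in\R^n$ and pick a minimizer $y_x\in E$, so that
$$
g(x)=f(y_x)+\langle G(y_x),x-y_x\rangle+\varphi_{y_x}(x)+a|x-y_x|^2 .
$$
Introduce the function $\Phi(z):=f(y_x)+\langle G(y_x),z-y_x\rangle+\varphi_{y_x}(z)+a|z-y_x|^2$, which is differentiable on $\R^n$ since $\varphi_{y_x}$ is (by the hypotheses of Theorem~\ref{MainTheorem with P=I for C1}). By the very definition of $g$ as an infimum we have $g\leq\Phi$ on $\R^n$, while $g(x)=\Phi(x)$. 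Hence, setting $\xi_x:=\nabla\Phi(x)=G(y_x)+\nabla\varphi_{y_x}(x)+2a(x-y_x)$, for every $h\neq 0$
$$
\frac{g(x+h)-g(x)-\langle\xi_x,h\rangle}{|h|}\ \leq\ \frac{\Phi(x+h)-\Phi(x)-\langle\nabla\Phi(x),h\rangle}{|h|},
$$
and the right-hand side tends to $0$ as $h\to 0$ by differentiability of $\Phi$ at $x$; this gives \eqref{g is superdifferentiable in proof of thm C1}. This is the substantive part of the lemma; the remaining claims are soft consequences.

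Continuity then splits into two halves. Upper semicontinuity is immediate: $g$ is a pointwise infimum of the continuous functions $z\mapsto f(y)+\langle G(y),z-y\rangle+\varphi_y(z)+a|z-y|^2$, $y\in E$ (each is continuous because $\varphi_y$ is differentiable); equivalently, \eqref{g is superdifferentiable in proof of thm C1} already yields $\limsup_{z\to x}g(z)\leq g(x)$. For lower semicontinuity at a point $x_0$, I would first note that the restriction provided by Lemma~\ref{gflat restricts in C1 proof} can be made locally uniform: the radius $\eta_x$ constructed there depends on $x$ only through $|x|$ and $\widetilde\varphi_{y_0}(x)$ for a fixed $y_0\in E$, both continuous in $x$, so one $\eta=\eta(x_0)$ works on a whole neighbourhood of $x_0$, where $g(x)=\inf\{f(y)+\langle G(y),x-y\rangle+\varphi_y(x)+a|x-y|^2:\ y\in E\cap B(0,\eta)\}$ with the infimum attained. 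Given $x_k\to x_0$ with minimizers $y_k\in E\cap B(0,\eta)$, compactness ($E$ closed) gives a subsequence with $y_k\to y_*\in E$; since $f$ and $G$ are continuous, the affine and quadratic parts of $\Phi_{y_k}(x_k)$ converge, forcing $\varphi_{y_k}(x_k)$ to converge to some $c\geq 0$, and one combines this with the supporting inequality $g(x_0)\leq f(y_*)+\langle G(y_*),x_0-y_*\rangle+\varphi_{y_*}(x_0)+a|x_0-y_*|^2$ and the nonnegativity of the $\varphi_y$ to obtain $\liminf_k g(x_k)\geq g(x_0)$. A finite-valued function that is both upper and lower semicontinuous is continuous.

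The step I expect to be the real obstacle is exactly this limit passage in the lower-semicontinuity argument. In contrast to the $C^{1,1}_{\textrm{loc}}$ results, Theorem~\ref{MainTheorem with P=I for C1} provides no uniform control over the family $\{\varphi_y\}$ (there is no analogue of \eqref{finite sup of Lip constants of phiy on balls main thm}), so one cannot simply let $\varphi_{y_k}(x_k)\to\varphi_{y_*}(x_0)$; the argument must instead be carried through using only the one-sided information genuinely available — the attained minimum at $x_0$, the pointwise bound $\varphi_y\geq 0$, the fact that $\varphi_{y_k}(x_k)$ is squeezed between convergent quantities, and the continuity of $f$ and $G$ — which is precisely the balancing that makes this part delicate, while the superdifferentiability and upper semicontinuity are routine.
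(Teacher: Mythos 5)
Your superdifferentiability argument is exactly the paper's: take the attained minimizer $y_x$ from Lemma~\ref{gflat restricts in C1 proof}, set $\xi_x=G(y_x)+\nabla\varphi_{y_x}(x)+2a(x-y_x)=G(y_x)+\nabla\widetilde\varphi_{y_x}(x)$, and bound $g(x+h)-g(x)-\langle\xi_x,h\rangle$ by the first-order Taylor remainder of the majorant $\Phi=\Phi_{y_x}$, which dominates $g$ and touches it at $x$. This is precisely what the paper does, with the same $\xi_x$. The paper's own proof of the lemma stops here; the sentence ``In particular $g$ is continuous'' is asserted in the statement but not argued at all in the proof.

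On the continuity point you are in fact more scrupulous than the paper, and you correctly locate the weak spot. Pointwise superdifferentiability only yields $\limsup_{h\to 0}g(x+h)\leq g(x)$, i.e.\ upper semicontinuity; it does not by itself give lower semicontinuity (the function equal to $0$ on $\R\setminus\{0\}$ and to $1$ at the origin is superdifferentiable at every point of $\R$, with $\limsup$ equal to $-\infty$ at $0$, yet is not continuous there). Your lower-semicontinuity sketch stalls exactly where you say it does: after extracting $y_k\to y_*$ and writing $\liminf_k g(x_k)=f(y_*)+\langle G(y_*),x_0-y_*\rangle+a|x_0-y_*|^2+c$ with $c=\lim_k\varphi_{y_k}(x_k)\geq 0$, you would need $c\geq\varphi_{y_*}(x_0)$ in order to beat the supporting inequality $g(x_0)\leq f(y_*)+\langle G(y_*),x_0-y_*\rangle+a|x_0-y_*|^2+\varphi_{y_*}(x_0)$, and the hypotheses of Theorem~\ref{MainTheorem with P=I for C1} provide no lower-semicontinuity-in-$y$ control of $(y,x)\mapsto\varphi_y(x)$ (there is no analogue of \eqref{finite sup of Lip constants of phiy on balls corollary} in the $C^1$ setting). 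So the part of the lemma you actually prove coincides with the part the paper proves; the ``in particular'' clause is left unsupported in the paper, and your attempt at supplying a proof correctly identifies the missing ingredient rather than papering over it.
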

\begin{proof}
We keep denoting $\widetilde{\varphi}_{y}(x)=\varphi_y(x)+a|x-y|^2$.
As noted in the preceding lemma, the infimum defining $g(x)$ is attained at, say, some point $y_x\in B(0, \eta_x)$. Let us put
$$
\xi_x := G(y_x)+\nabla\widetilde{\varphi}_{y_x}(x).
$$
We have
\begin{eqnarray*}
& & g(x+h)-g(x)-\langle \xi_x, h\rangle  \\
& & \leq f(y_x)+\langle G(y_x), x+h-y_x\rangle +\widetilde{\varphi}_{y_x}(x+h) \\
& & \,\,\, \,\,\,-f(y_x)-\langle G(y_x), x-y_x\rangle -\widetilde{\varphi}_{y_x}(x)-\langle G(y_x)+\nabla\widetilde{\varphi}_{y_x}(x), h\rangle \\
& &= \widetilde{\varphi}_{y_x}(x+h) -\widetilde{\varphi}_{y_x}(x)-\langle \nabla\widetilde{\varphi}_{y_x}(x), h\rangle \,=\, o(h).
\end{eqnarray*}
\end{proof}
We can then define
$
F=\textrm{conv}(g)
$
and use the remark made in \cite{KirchheimKristensen} that \eqref{g is superdifferentiable in proof of thm C1} together with
$
\lim_{|x|\to\infty}g(x)=\infty
$
are sufficient to ensure the differentiability of $F$. Since $F$ is convex, it follows that $F\in C^{1}(\R^n)$. The rest of the proof is exactly as in that of Theorem \ref{Corollary to C11 Whitney thm for coercive convex functions}.

The necessity part is obvious: just set $\varphi_y(x)=F(x)-F(y)-\langle \nabla F(y), x-y\rangle$.
\qed

\section{Acknowledgment}
I want to thank Pavel Shvartsman for reading this paper and making several suggestions that led me to improve the exposition. I also thank the referee for the same reason.





\end{document}